\newtheorem{thm}{Theorem}[section]
\newtheorem{lemma}{Lemma}[section]
\theoremstyle{definition}
\theoremstyle{remark}
\newtheorem{rem}{Remark}[section]
\numberwithin{equation}{section}
\newcommand{\R}{{\mathbb R}}
\def\f{\frac}
\def\hf1{^\f{1}{1-\xi^2}}
\def\be{\begin{equation}}
\def\en{\end{equation}}
\def\bs{\begin{split}}
\def\es{\end{split}}
\author{Dehua Wang  and Zhuan Ye }
\address{Department of Mathematics, University of Pittsburgh,
                           Pittsburgh, PA 15260, USA}
\email{dwang@math.pitt.edu}
\address{Department of Mathematics and Statistics, Jiangsu Normal University,  Xuzhou, Jiangsu 221116, P. R. China}
\email{yezhuan815@126.com}
\title[Global existence and exponential decay of Navier-Stokes equations]
{Global existence and exponential decay of strong
solutions for the inhomogeneous incompressible Navier-Stokes equations with vacuum}
\keywords{Navier-Stokes equations, vacuum, inhomogeneous, incompressible,
exponential decay, global strong solution.} \subjclass{35Q35, 35B65, 76N10, 76D05.}
\date{\today}
\begin{document}

\begin{abstract}
The inhomogeneous incompressible Navier-Stokes equations with fractional Laplacian dissipations in the multi-dimensional whole space are considered. The existence and uniqueness of global strong solution with vacuum are established for large initial data. The  exponential decay-in-time of the   strong solution is also obtained, which is different from the homogeneous case. The initial density may have vacuum and even compact support.
\end{abstract}
\maketitle

\section{Introduction}

In this paper, we are concerned with the Cauchy problem of the following fractional inhomogeneous incompressible
Navier-Stokes equations:
\begin{equation}\label{INDNSE}
\left\{\begin{array}{l}
\partial_t \rho+{\rm div}(\rho u)
       =0 ,\vspace{2mm} \qquad   x\in \mathbb{R}^{n},\,t>0,\\
\partial_t(\rho u) + {\rm div}(\rho u\otimes u) +\mu(-\Delta)^{\alpha}u+\nabla p =0,
             \vspace{2mm}\\
\nabla\cdot u=0,\vspace{2mm}\\
\rho(x,0)=\rho_{0}(x),\quad u(x,0)=u_{0}(x),
\end{array}\right.
\end{equation}
where $\rho=\rho(x,t)$ denotes the density, $u=u(x,t)=(u_{1}(x,t),u_{2}(x,t),\cdot\cdot\cdot,u_{n}(x,t))$ denotes the fluid velocity, $p(x,t)$ is the scalar pressure, and  $\mu>0$ is the   viscosity that is assumed to be one for simplicity; $\rho_{0}(x)$ and $u_{0}(x)$
are the prescribed initial data for the density and   velocity with   $\nabla\cdot u_{0}=0$.
The fractional Laplacian operator $(-\Delta)^{\alpha}$ with $\alpha>0$ is defined via the Fourier transform as
$$\widehat{(-\Delta)^{\alpha} f} (\xi) = |\xi|^{2\alpha}\, \widehat{f}(\xi),$$
where $\widehat{f}$ is the Fourier transform of $f$.
Recently there have been a lot of studies on the fractional Laplace-type problems, not only for   mathematical interests but also for various applications in different fields. As a matter of fact, the application
background of the fractional problems can be found in fractional quantum mechanics \cite{Laskin}, probability \cite{Applebaum,Bertoinl}, overdriven detonations in gases \cite{Clavind}, anomalous diffusion in semiconductor growth \cite{Woyczyh}, physics and chemistry \cite{Metzlerk}, optimization and finance \cite{ConTankov} and so on.

When $\alpha=1$, the system \eqref{INDNSE} becomes the classical inhomogeneous incompressible Navier-Stokes equations,  describing    fluids    inhomogeneous in density. Typical examples of such fluids include the mixture of incompressible and non-reactant flows, flows with complex structure (e.g. blood flows or rivers), fluids containing a melted substance, etc. We refer to \cite{Li1998} for the detailed derivation of this system.
Because of its physical importance, complexity, rich phenomena and mathematical challenges, there is a notablly large literature on the mathematical studies on the   well-posedness of solutions to the classical inhomogeneous incompressible Navier-Stokes equations. For example, when the initial density is strictly positive, Kazhikov \cite{Ka} proved that the system has at least one global weak solution in the energy space,  the local (global if $n=2$) existence and uniqueness of strong solutions
were first obtained in \cite{AKM,LS}, and similar results were established recently in a series of works such as  \cite{Abidi1,Abidi2,Cheminpp,Danchin1prse,Danchin,Paicu1,Paicu2,
DanchinMucha,DanchinMucha1}.
For the initial data with vacuum, the problem becomes much more complicated due to the possible degeneracy near vacuum.
Simon \cite{Si1990} first proved the global existence of weak solutions with finite energy, which was  extended later by Lions \cite{Li1998} to the case of density-dependent viscosity.
For the strong solution in dimensions three, Choe-Kim \cite{CK2003} proposed a compatibility condition and successfully established the local existence of strong solutions, which was  improved by Craig-Huang-Wang \cite{Craighw} for global strong small solutions (see \cite{HW2015jde,Zhang15jde,Hellsd} for the case of density-dependent viscosity). In the case $n=2$, it was shown in \cite{HW2013,LsZhong} that the initial-boundary value problem and the Cauchy problem of the inhomogeneous Navier-Stokes equations with vacuum admits a unique global strong solution for the general initial data.
For $n\ge 3$, the global existence of strong or smooth solutions with general initial data is a well-known open problem.
One  difficulty is that the Laplacian dissipation is insufficient to control the nonlinearity when applying the standard techniques to establish global a priori bounds. Hence it is natural to explore the problem via replacing the Laplacian operator by the fractional Laplacian operators as in \eqref{INDNSE}, motivated by the applications aforementioned, in order to obtain the global strong solution for the general initial data, which is the aim of this paper.

When the density $\rho$ is a constant, the system (\ref{INDNSE}) becomes the classical fractional homogeneous incompressible Navier-Stokes equations, which admit a unique global smooth solution as long as $\alpha\geq\frac{1}{2}+\frac{n}{4}$.
This result dates back to J. Lions's book \cite{Lions} in 1969, which is even true for some logarithmic corrections (see \cite{ttTao,Barbatomr} for details). These results were extended to the inhomogeneous system \eqref{INDNSE} in \cite{Fangz} for $\alpha\geq\frac{1}{2}+\frac{n}{4}$ and in \cite{hanw} for the corresponding logarithmic case.
It should be noted that both \cite{Fangz} and \cite{hanw} require   the initial density $\rho_{0}$  bounded away from zero, i.e., the flow has no vacuum.
The  goal of this paper is to relax this restriction. More precisely, we shall establish the global existence of strong solutions with
vacuum to the system \eqref{INDNSE}. Moreover, we shall also obtain the exponential decay-in-time of the   strong solution.
We recall that $(\rho,u)$ is called a weak solution to the system \eqref{INDNSE} if it satisfies \eqref{INDNSE} in the sense of distributions, and a strong solution if the system \eqref{INDNSE} holds almost everywhere.

In this paper, we shall  adopt  the convention that $C$ denotes a generic constant depending only on the initial data.
For simplicity, we will frequently use the notation $\Lambda:=(-\Delta)^{\frac{1}{2}}$.
For $1\leq r\leq\infty$ and integer $k\geq0$, we use the following notations for the standard homogeneous and inhomogeneous Sobolev spaces:
$$L^{r}=L^{r}(\mathbb{R}^{n}),\quad \dot{W}^{k,r}=\{g\in L_{loc}^{1}(\mathbb{R}^{n}):\ \|g\|_{\dot{W}^{k,r}}:=\|\nabla^{k}g\|_{L^{r}}<\infty\},\ \  W^{k,r}:=L^{r}\cap \dot{W}^{k,r},$$
$$\dot{H}^{s}=\left\{g:\ \|g\|_{\dot{H}^{s}}^{2}=\int_{\mathbb{R}^{n}}|\xi|^{2s}|\widehat{f}(\xi)|^{2}\,d\xi
<\infty\right\},\quad  {H}^{s}:=L^{2}\cap \dot{H}^{s}.$$

Now we state our main result of this paper as follows.

\begin{thm}\label{Th1}
For the system (\ref{INDNSE}) with $\alpha=
\frac{1}{2}+\frac{n}{4}$ and $n\geq3$, if the initial data $(\rho_{0},\,u_{0})$
satisfies the following conditions:
$$0\leq\rho_{0}\in  L^{\frac{2n}{n+2}}(\mathbb{R}^{n})\cap L^{\infty}(\mathbb{R}^{n}),\quad \nabla\rho_{0}\in L^{\frac{4n}{n+6}}(\mathbb{R}^{n})\cap L^{2}(\mathbb{R}^{n}),$$
$$ \nabla\cdot u_{0}=0,\quad u_{0}\in \dot{H}^{\frac{1}{2}+\frac{n}{4}}(\mathbb{R}^{n}),\quad \sqrt{\rho_{0}}u_{0}\in L^{2}(\mathbb{R}^{n}),$$
then it has a unique global strong solution $(\rho,u)$ such that,  for any given $T>0$ and for any $0<\tau<T$,
$$0\leq\rho\in L^{\infty}(0,T; L^{\frac{2n}{n+2}}(\mathbb{R}^{n})\cap L^{\infty}(\mathbb{R}^{n})),\quad \nabla\rho\in L^{\infty}(0,T; L^{\frac{4n}{n+6}}(\mathbb{R}^{n})\cap L^{2}(\mathbb{R}^{n})),$$
$$u \in L^{\infty}(0,T; \dot{H}^{\frac{1}{2}+\frac{n}{4}}(\mathbb{R}^{n}))\cap L^{2}(0,T; \dot{H}^{1+\frac{n}{2}}(\mathbb{R}^{n})),
\quad \sqrt{\rho}\partial_{t}u\in L^{\infty}(\tau,T; L^{2}(\mathbb{R}^{n})),$$
$$ \sqrt{\rho}\partial_{tt}u\in L^{2}(\tau,T; L^{2}(\mathbb{R}^{n})),\quad
\partial_{t}u \in L^{2}(\tau,T; \dot{H}^{\frac{1}{2}+\frac{n}{4}}(\mathbb{R}^{n}))\cap L^{\infty}(\tau,T; \dot{H}^{\frac{1}{2}+\frac{n}{4}}(\mathbb{R}^{n})),$$
$$\Lambda^{1+\frac{n}{2}}u\in L^{\infty}(\tau,T;L^{\frac{4n}{n-2}}(\mathbb{R}^{n})) ,\quad    p \in L^{\infty}(\tau,T; {H}^{1}(\mathbb{R}^{n}))\cap \in L^{\infty}(\tau,T; {W}^{1,\frac{4n}{n-2}}(\mathbb{R}^{n})).$$
Moreover, there exists some positive constant $\gamma$ depending only on $\|\rho_{0}\|_{L^{\frac{2n}{n+2}}}$ such that,  for all $t\geq1$,
\begin{eqnarray}\|\Lambda^{\frac{1}{2}+\frac{n}{4}}u(t)\|_{L^{2}}^{2} &+&\|\sqrt{\rho}\partial_{t}u(t)\|_{L^{2}}^{2}
+\|\Lambda^{1+\frac{n}{2}}u(t)\|_{L^{2}\cap L^{\frac{4n}{n-2}}}^{2} +\|\Lambda^{\frac{1}{2}
+\frac{n}{4}}\partial_{t}u(t)\|_{L^{2}}^{2}\nonumber\\&&+\|p(t)\|_{H^{1}\cap W^{1,\frac{4n}{n-2}}}^{2}
\leq \widetilde{C}e^{-\gamma t},\nonumber
\end{eqnarray}
where $\widetilde{C}$ depends only on $\|\rho_{0}\|_{L^{\frac{2n}{n+2}}}$, $\|\rho_{0}\|_{L^{\infty}}$, $\|\nabla\rho_{0}\|_{L^{2}}$, $\|\sqrt{\rho_{0}}u_{0}\|_{L^{2}}$ and $\|\Lambda^{\frac{1}{2}+\frac{n}{4}}u_{0}\|_{L^{2}}$.
\end{thm}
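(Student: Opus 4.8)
The plan is to establish Theorem \ref{Th1} through a sequence of \emph{a priori} estimates on smooth approximate solutions, followed by a standard compactness argument for existence and a separate energy argument for uniqueness. The critical feature is that the critical index $\alpha=\frac12+\frac{n}{4}$ makes the dissipation $\dot{H}^{\alpha}\hookrightarrow$ exactly enough: by Sobolev embedding, $\dot{H}^{1/2+n/4}(\mathbb{R}^n)\hookrightarrow L^{4}(\mathbb{R}^n)$ (independently of $n$), so the convective term $\rho u\cdot\nabla u$ can be controlled in the natural energy space. First I would construct approximate solutions (e.g. by mollifying the data, adding $\epsilon\rho_0$ to avoid vacuum at the approximate level, and solving the linearized problem by iteration), for which the basic energy identity
\be
\f12\f{d}{dt}\int_{\R^n}\r|u|^2\,d\x+\|\Lambda^{\frac12+\frac{n}{4}}u\|_{L^2}^2=0
\en
holds, giving $\sqrt{\r}u\in L^\infty_t L^2_x$ and $u\in L^2_t\dot H^{1/2+n/4}_x$ uniformly. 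Alongside, the transport equation propagates $0\le\r\le\|\r_0\|_{L^\infty}$ and, differentiating it, the bounds on $\na\r$ in $L^{4n/(n+6)}\cap L^2$, using $\nabla\cdot u=0$ and the $L^\infty_tL^\infty_x$ bound on $\nabla u$ that comes later in the hierarchy; the $L^{2n/(n+2)}$ bound on $\r$ is simply transported since $L^{2n/(n+2)}$ is conserved by the flow of a divergence-free field.

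The heart of the argument is the first-order estimate: test the momentum equation with $\partial_t u$ (or equivalently with $\dot u=\partial_t u+u\cdot\nabla u$) to obtain
\be
\f12\f{d}{dt}\|\Lambda^{\frac12+\frac{n}{4}}u\|_{L^2}^2+\int_{\R^n}\r|\partial_t u|^2\,d\x
=-\int_{\R^n}\r u\cd\na u\cd\partial_t u\,d\x.
\en
The right-hand side is estimated by Hölder with exponents tuned to the embedding $\dot H^{1/2+n/4}\hookrightarrow L^4$: writing $\r u\cd\na u=\sqrt\r\,(\sqrt\r\,\partial_t u$-absorbing factor$)$, one bounds $\|\sqrt\r\,u\cd\na u\|_{L^2}\le\|\r\|_{L^\infty}^{1/2}\|u\|_{L^4}\|\na u\|_{L^4}\lesssim\|\Lambda^{\frac12+\frac{n}{4}}u\|_{L^2}\|\Lambda^{1+\frac{n}{2}}u\|_{L^2}$, and then the elliptic (Stokes) regularity $\|\Lambda^{1+n/2}u\|_{L^2}+\|\na p\|_{L^2}\lesssim\|\sqrt\r\,\partial_t u\|_{L^2}+\|\sqrt\r\,u\cd\na u\|_{L^2}$ lets one close a Gronwall-type inequality once the energy $\|\Lambda^{1/2+n/4}u\|_{L^2}$ is small \emph{relative to the dissipation}; this is where the smallness is \emph{generated dynamically} rather than assumed — the basic energy decays, so after a short time the first-order energy enters the good regime, and one then obtains time-weighted bounds $t\|\sqrt\r\,\partial_t u(t)\|_{L^2}^2\in L^\infty$ and $t\|\sqrt\r\,\partial_t u\|_{L^2}^2\in L^1$ by inserting the weight $t$ (or $\min\{t,1\}$) into the estimate, which accounts for the loss of regularity near $t=0$ reflected in the $\tau>0$ in the statement. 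Higher regularity ($\sqrt\r\,\partial_{tt}u\in L^2$, $\partial_t u\in L^2_t\dot H^{1/2+n/4}_x$, $\Lambda^{1+n/2}u\in L^\infty_tL^{4n/(n-2)}_x$, and the pressure bounds) follows by differentiating the momentum equation in $t$, testing with $\partial_{tt}u$, and feeding back the already-obtained bounds, together with higher $L^p$ Stokes estimates and the embedding $\dot H^{1+n/2}\hookrightarrow\dot W^{1,4n/(n-2)}$.

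For the \textbf{exponential decay}, the key observation is that the Poincaré-type inequality fails on $\R^n$ but is \emph{replaced} by the fact that the density has finite mass $\|\r_0\|_{L^{2n/(n+2)}}$: by Hölder and Sobolev,
\be
\|\sqrt\r\,u\|_{L^2}^2\le\|\r\|_{L^{n/2}}\|u\|_{L^{2n/(n-2)}}^2\le C\|\r_0\|_{L^{2n/(n+2)}}^{?}\,\|\na u\|_{L^2}^2
\lesssim\|\r_0\|_{L^{2n/(n+2)}}\|\Lambda^{\frac12+\frac{n}{4}}u\|_{L^2}^2,
\en
(the precise exponents arranged via interpolation and $\|\r\|_{L^{n/2}}\le\|\r\|_{L^{2n/(n+2)}}^{\theta}\|\r\|_{L^\infty}^{1-\theta}$; in fact a cleaner route bounds $\|\sqrt\r u\|_{L^2}$ directly by $\|\r\|_{L^{2n/(n+2)}}^{1/2}\|u\|_{L^{2n/(n-2)}}$). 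Plugging this into the basic energy identity gives $\f{d}{dt}\|\sqrt\r u\|_{L^2}^2\le-\gamma\|\sqrt\r u\|_{L^2}^2$ with $\gamma$ depending only on $\|\r_0\|_{L^{2n/(n+2)}}$, hence $\|\sqrt\r u(t)\|_{L^2}^2\le Ce^{-\gamma t}$. The same exponential factor then propagates through the differential inequalities for $\|\Lambda^{1/2+n/4}u\|_{L^2}^2$, $\|\sqrt\r\,\partial_t u\|_{L^2}^2$, etc.: each satisfies $\f{d}{dt}E_k+\gamma' E_k\le C(\text{lower-order exponentially small terms})$ once $t\ge1$, and an integrating-factor argument upgrades each to $O(e^{-\gamma t})$, with the pressure controlled by the Stokes estimate. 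I expect the \textbf{main obstacle} to be closing the first-order \emph{a priori} estimate: one must show the nonlinear term is genuinely subcritical after exploiting the decay of the basic energy — i.e. verify that the constant multiplying $\|\Lambda^{1+n/2}u\|_{L^2}^2$ on the right is $\le\frac12$ once $\|\Lambda^{1/2+n/4}u\|_{L^2}^2$ is below a threshold, and that the basic energy actually reaches that threshold — so that the Stokes term can be absorbed; handling the singularity at $t=0$ (no compatibility condition is assumed on $\sqrt{\r_0}\,\partial_t u|_{t=0}$) via the time weight is the accompanying technical point, and propagating the bounds to the approximate system uniformly in the regularization parameter requires care with the vacuum.
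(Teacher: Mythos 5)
There is a genuine gap at the heart of your argument, namely in how you close the first-order estimate. You bound the convective term by
$\|\sqrt{\rho}\,u\cdot\nabla u\|_{L^{2}}\lesssim \|\Lambda^{\frac12+\frac n4}u\|_{L^{2}}\|\Lambda^{1+\frac n2}u\|_{L^{2}}$
and then propose to absorb the top-order norm $\|\Lambda^{1+\frac n2}u\|_{L^{2}}^{2}$ via Stokes regularity, which forces a smallness condition on $\|\Lambda^{\frac12+\frac n4}u\|_{L^{2}}$ that you claim is ``generated dynamically'' by the decay of the basic energy. This is circular: the basic energy estimate only gives $\int_{0}^{\infty}\|\Lambda^{\frac12+\frac n4}u(\tau)\|_{L^{2}}^{2}\,d\tau<\infty$ (and decay of $\|\sqrt{\rho}u\|_{L^{2}}$), not pointwise smallness of $\|\Lambda^{\frac12+\frac n4}u(t)\|_{L^{2}}$ before the first-order estimate is closed; and to reach a later time at which such smallness might hold you already need the global strong estimate you are trying to prove, since the local solution only lives on a short interval depending on the data size. (Your auxiliary claim that $\dot H^{\frac12+\frac n4}(\mathbb{R}^{n})\hookrightarrow L^{4}$ ``independently of $n$'' is also false; the correct embedding is into $L^{\frac{4n}{n-2}}$.) The way the paper closes this step requires no smallness at all: pairing the sharp exponents $\|u\cdot\nabla u\|_{L^{2}}\le \|u\|_{L^{\frac{4n}{n-2}}}\|\nabla u\|_{L^{\frac{4n}{n+2}}}\lesssim \|\Lambda^{\frac12+\frac n4}u\|_{L^{2}}^{2}$, the term $-\int\rho u\cdot\nabla u\cdot\partial_{t}u\,dx$ is bounded by $\frac12\|\sqrt{\rho}\partial_{t}u\|_{L^{2}}^{2}+C\|\Lambda^{\frac12+\frac n4}u\|_{L^{2}}^{2}\,\|\Lambda^{\frac12+\frac n4}u\|_{L^{2}}^{2}$, so the differential inequality for $\|\Lambda^{\frac12+\frac n4}u\|_{L^{2}}^{2}$ closes by Gronwall using only the time-integrability of $\|\Lambda^{\frac12+\frac n4}u\|_{L^{2}}^{2}$ coming from the basic energy; the bound on $\|\Lambda^{1+\frac n2}u\|_{L^{2}}$ is then read off the Stokes system afterwards, not absorbed. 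This is precisely why the theorem holds for large data, and your proposed mechanism does not deliver it.

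Two further points. Your exponential-decay observation ($\|\sqrt{\rho}u\|_{L^{2}}\lesssim\|\rho_{0}\|_{L^{\frac{2n}{n+2}}}^{1/2}\|\Lambda^{\frac12+\frac n4}u\|_{L^{2}}$ substituted into the energy identity) is indeed the paper's key idea, though your exponents need fixing: the correct pairing is $\|\sqrt{\rho}\|_{L^{\frac{4n}{n+2}}}\|u\|_{L^{\frac{4n}{n-2}}}$, which keeps $\gamma$ depending only on $\|\rho_{0}\|_{L^{\frac{2n}{n+2}}}$ as stated (your route through $\|\rho\|_{L^{n/2}}$ would drag in $\|\rho_{0}\|_{L^{\infty}}$). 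Finally, your ``separate energy argument for uniqueness'' is not standard here: since only $\int_{0}^{t}\tau\|\Lambda^{\frac12+\frac n4}\partial_{\tau}u\|_{L^{2}}^{2}\,d\tau$ is bounded (the unweighted integral is not, absent a compatibility condition), the usual Gronwall argument fails, and one needs the Gronwall-type lemma with the structure $\frac{d}{dt}X_{1}\le AY^{1/2}$, $\frac{d}{dt}X_{2}+Y\le\beta X_{2}+\gamma X_{1}^{2}$ (with $t\gamma(t)$ integrable), applied to $X_{1}=\|\rho-\widetilde{\rho}\|_{L^{\frac{2n}{n+2}}}$ and $X_{2}=\|\sqrt{\rho}(u-\widetilde{u})\|_{L^{2}}^{2}$; this also explains why $\nabla\rho_{0}\in L^{\frac{4n}{n+6}}$ is assumed.
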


\begin{rem}
For  the exponential decay-in-time property of Theorem \ref{Th1},  the estimate of the density:
\begin{align}\label{density23}
\|\rho(t)\|_{L^{\frac{2n}{n+2}}}\leq \|\rho_{0}\|_{L^{\frac{2n}{n+2}}}
\end{align}
plays a crucial role. This estimate \eqref{density23} does not hold for the homogeneous case (with constant density) in the whole space. In fact, only algebraic decay rate has been obtained for the homogeneous case in literature, e.g., \cite{Abidi2,Chenzmcpde,JiuY15,kato84,Wiegner,Schonbek}.
\end{rem}

\begin{rem}
As a consequence of the proof of Theorem \ref{Th1}, the corresponding conclusions of the global existence and exponential decay of strong solutions are also valid  for the system (\ref{INDNSE}) with at least $\frac{1}{2}+\frac{n}{4}<\alpha<\frac{n}{2}$. We also remark that our arguments can be adopted to other similar systems with the same dissipations, such as the inhomogeneous incompressible magnetohydrodynamic equations.
\end{rem}

\begin{rem}
Under the assumption that the initial velocity is suitably small,
the exponential decay-in-time of the strong solutions was obtained in \cite{Hellsd} for the Cauchy problem of the three-dimensional  classical inhomogeneous incompressible Navier-Stokes equations (i.e., the system (\ref{INDNSE}) with $\alpha=1$) with density-dependent viscosity and vacuum, which of course is valid for the constant viscosity case.
We remark that Theorem \ref{Th1} is proved without any smallness on the initial data. Moreover, the initial density is allowed to have  vacuum. We also point out that the regularity assumption on the initial density $\nabla\rho_{0}\in L^{\frac{4n}{n+6}}$ is used only to guarantee the uniqueness of the solution.
\end{rem}

\begin{rem}
Finally, compared with the previous works \cite{CK2003,Craighw,HW2013,HW2015jde,Zhang15jde}, the following corresponding compatibility condition on the initial data is dropped from Theorem \ref{Th1}:
\begin{equation}\label{compatibility}
(-\Delta)^{\frac{1}{2}+\frac{n}{4}}u_{0}+\nabla p_{0}=\sqrt{\rho_{0}}g,
\end{equation}
with $(p_{0},\,g)\in  {H}^{1}(\mathbb{R}^{n})\times L^{2}(\mathbb{R}^{n})$.
However, without the compatibility condition, the price that we need to pay is that the parameter $\tau$ in Theorem \ref{Th1} must be positive and can not be replaced by  the initial time $\tau=0$.
\end{rem}

We now outline the main idea and make some comments on the proof of this theorem.
The local existence of strong solutions to the system (\ref{INDNSE}) follows from the works in literature such as \cite{CK2003,lijink}  (see Lemma \ref{LT}). Thus our efforts are devoted to establishing global a priori estimates on strong solutions to the system (\ref{INDNSE}) in suitable higher-order norms.
It should be pointed out that compared with  the related  works in literature, the proof of Theorem \ref{Th1} is much more involved due to the absence of the positive lower bound for the initial density as well as the absence of the smallness and the compatibility conditions for the initial velocity. Consequently, some new ideas are needed to overcome these difficulties as explained below.
First, taking the advantage of the estimate \eqref{density23} on the density, 
we have the following key observation:
\begin{eqnarray}
\|\sqrt{\rho}u\|_{L^{2}}\leq \|\sqrt{\rho}\|_{L^{\frac{4n}{n+2}}}\|u\|_{L^{\frac{4n}{n-2}}}\leq C \|{\rho}\|_{L^{\frac{2n}{n+2}}}^{\frac{1}{2}}
\|\Lambda^{\frac{1}{2}+\frac{n}{4}}u\|_{L^{2}}\leq C
\|\Lambda^{\frac{1}{2}+\frac{n}{4}}u\|_{L^{2}},\nonumber
\end{eqnarray}
which implies that $\|\sqrt{\rho}u(t)\|_{L^{2}}^{2}$ decays with the rate of $e^{-\gamma t}$ for some $\gamma>0$ depending only on $\|\rho_{0}\|_{L^{\frac{2n}{n+2}}}$ (see Lemma \ref{INDL221} for details). With the help of this key exponential decay-in-time rate, we can show that $\|\Lambda^{\frac{1}{2}+\frac{n}{4}}u(t)\|_{L^{2}}^{2}$ decays at the same
rate as $e^{-\gamma t}$ (see Lemma \ref{INDL222} for details). The   next step is to derive the bound of $\|\sqrt{\rho}\partial_{t}u(t)\|_{L^{2}}^{2}$. However, it prevents us to achieve this goal due to the absence of the compatibility condition \eqref{compatibility} for the initial velocity. To overcome this difficulty, we first derive the following crucial time-weighted estimate (see (\ref{addok5251})):
\begin{eqnarray}\label{testiamt11}
t\|\sqrt{\rho}\partial_{t}u(t)\|_{L^{2}}^{2} +\int_{0}^{t}{\tau\|\Lambda^{\frac{1}{2}+\frac{n}{4}}\partial_{\tau}u(\tau)
\|_{L^{2}}^{2}\,d\tau} \leq C,\quad \forall\,t\geq0,\end{eqnarray}
where the positive constant $C$ is independent of the initial data of $\sqrt{\rho}\partial_{t}u$. In fact, the time-weighted estimate is  crucial   in dropping the compatibility condition  on the initial data (see \cite{Hellsd,lijink,LsZhong,Paicu2} for example).
As a result,  (\ref{testiamt11}) allows us to derive the desired exponential decay-in-time rate (see (\ref{521tt007})):
$$e^{\gamma t}\|\sqrt{\rho}\partial_{t}u(t)\|_{L^{2}}^{2} +\int_{1}^{t}{e^{\gamma \tau}\|\Lambda^{\frac{1}{2}+\frac{n}{4}}\partial_{\tau}u(\tau)
\|_{L^{2}}^{2}\,d\tau} \leq C,\quad \forall\,t\geq1.$$
As a matter of fact, all these exponential
decay-in-time rates and the time-weighted estimate (\ref{testiamt11}) play an important role in obtaining the desired uniform-in-time bound of $\int_{0}^{t}{\|\nabla u(\tau)
\|_{L^{\infty}} \,d\tau}$ (see (\ref{xctoplk243}) for details). Next, by means of these a priori estimates, we can establish the time independent
estimates on the gradient of the density. This further allows us to derive the time-weighted estimate (see (\ref{t5529b01})):
\begin{eqnarray} \label{cvfdr67}
t^{2}\|\Lambda^{\frac{1}{2}+\frac{n}{4}}\partial_{t}u(t)
\|_{L^{2}}^{2} +\int_{0}^{t}{\tau^{2}\|\sqrt{\rho}\partial_{\tau\tau}u(\tau)\|_{L^{2}}^{2}\,d\tau} \leq C,\quad \forall\,t\geq0.
\end{eqnarray}
Note that, thanks to the weighted factor $t^{2}$, the constant $C$ in the above estimate is independent of the initial data of $\Lambda^{\frac{1}{2}+\frac{n}{4}}\partial_{t}u$.
With (\ref{cvfdr67}) in hand, we then can conclude the exponential decay-in-time rate (see (\ref{dsvv54t225})):
\begin{eqnarray}
e^{\gamma t}\|\Lambda^{\frac{1}{2}+\frac{n}{4}}\partial_{t}u(t)
\|_{L^{2}}^{2} +\int_{1}^{t}{e^{\gamma \tau}\|\sqrt{\rho}\partial_{\tau\tau}u(\tau)\|_{L^{2}}^{2}\,d\tau} \leq C ,\quad \forall\,t\geq1.\nonumber
\end{eqnarray}
Therefore, the higher regularity of the velocity and the pressure follow directly. The uniqueness is quite subtle as we only have the estimate  $\int_{0}^{t}{\tau\|\Lambda^{\frac{1}{2}+\frac{n}{4}}\partial_{\tau}u(\tau)
\|_{L^{2}}^{2}\,d\tau} \leq C$ rather than $\int_{0}^{t}{\|\Lambda^{\frac{1}{2}+\frac{n}{4}}\partial_{\tau}u(\tau)
\|_{L^{2}}^{2}\,d\tau} \leq C$. This means that the uniqueness can not be proved by the standard Gronwall's inequality,
instead  we use a new Gronwall type inequality in \cite{lijink}.
With all these a priori estimates obtained, we can finally establish the global existence and uniqueness as well as the exponential decay of global strong solution to the system \eqref{INDNSE} in Theorem \ref{Th1}.

As a byproduct, using the similar arguments of the proof for Theorem \ref{Th1}, we can also obtain the exponential decay of strong solutions to the two-dimensional Navier-Stokes equations with damping. We remark that without damping, only algebraic decay rate was obtained in \cite{ LsZhong}.

The rest of the paper is organized as follows. In Section 2 we carry out the proof of Theorem \ref{Th1}. In the appendix, we present
the byproduct on the exponential decay for the two-dimensional Navier-Stokes equations with damping and a sketch of the proof.

\bigskip

\section{The proof of Theorem \ref{Th1}}\setcounter{equation}{0}
This section is devoted to the proof of Theorem \ref{Th1}.
We shall prove Theorem \ref{Th1} in several steps. In the first step, we state the local existence and uniqueness of strong solutions. The main part of the proof will focus on establishing a priori estimates for strong solutions. In the second step, we make use of the estimate on the density to derive the exponential decay-in-time:  $e^{\gamma t}\|\sqrt{\rho}u(t)\|_{L^{2}}^{2}\leq C$ for some $\gamma>0$, which also allows us to further establish the same exponential
decay-in-time:  $e^{\gamma t}\|\Lambda^{\frac{1}{2}+\frac{n}{4}}u(t)\|_{L^{2}}^{2}\leq C$.
In the third step, with the aid of the exponential
decay estimates obtained above, we continue to derive the time-weighted estimates and the exponential
decay of $\|\sqrt{\rho}\partial_{t}u(t)\|_{L^{2}}^{2}$ as well as
some other quantities. With the above estimates at hand, the fourth step is devoted to obtaining the uniform-in-time bound of $\int_{0}^{t}{\|\nabla u(\tau)
\|_{L^{\infty}} \,d\tau}$ and thus establishing the estimate of the gradient of $\rho$.
In the fifth step, we establish the time-weighted estimates and the exponential
decay of $\|\Lambda^{\frac{1}{2}+\frac{n}{4}}\partial_{t}u(t)
\|_{L^{2}}^{2}$ and some other quantities. Finally, combining all the above estimates, we prove Theorem \ref{Th1}. Now we present the details step by step.

\subsection{Local well-posedness}\setcounter{equation}{0}

Inspired by the works \cite{CK2003,lijink}, one may construct the local existence and uniqueness of strong solutions.

\begin{lemma}[Local strong solution]\label{LT}
Under the conditions of Theorem \ref{Th1},  there exists a small time $T^{\ast}$ and a unique strong solution $(\rho,\,u)$ defined on the time period $[0,T^{\ast}]$ to the system (\ref{INDNSE}) with $\alpha=
\frac{1}{2}+\frac{n}{4}$ and $n\geq2$ such that,  for any $0<\tau<T^{\ast}$,
$$0\leq\rho\in L^{\infty}(0,T^{\ast}; L^{\frac{2n}{n+2}}(\mathbb{R}^{n})\cap L^{\infty}(\mathbb{R}^{n})),\quad \nabla\rho\in L^{\infty}(0,T^{\ast}; L^{\frac{4n}{n+6}}(\mathbb{R}^{n})\cap L^{2}(\mathbb{R}^{n})),$$
$$p \in L^{\infty}(0,T^{\ast}; {H}^{1}(\mathbb{R}^{n})),\quad u \in L^{\infty}(0,T^{\ast}; \dot{H}^{\frac{1}{2}+\frac{n}{4}}(\mathbb{R}^{n}))\cap L^{2}(0,T^{\ast}; \dot{H}^{1+\frac{n}{2}}(\mathbb{R}^{n})),$$
$$ \sqrt{\rho}\partial_{t}u\in L^{\infty}(\tau,T^{\ast}; L^{2}(\mathbb{R}^{n})),\quad
\partial_{t}u\in L^{2}(\tau,T^{\ast}; \dot{H}^{\frac{1}{2}+\frac{n}{4}}(\mathbb{R}^{n})).$$
\end{lemma}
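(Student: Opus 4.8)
The plan is to run the by-now standard regularization--linearization--iteration scheme for inhomogeneous incompressible flows with vacuum, in the spirit of \cite{CK2003,lijink}, adapted to the fractional dissipation $(-\Delta)^{\frac12+\frac n4}$. First I would regularize the (possibly vanishing) initial density: replace $\rho_{0}$ by $\rho_{0}^{\delta}:=J_{\delta}*\rho_{0}+\delta$ with $J_{\delta}$ a mollifier, so that $\delta\le\rho_{0}^{\delta}$, $\rho_{0}^{\delta}\to\rho_{0}$ in $L^{\frac{2n}{n+2}}\cap L^{\infty}$ and $\nabla\rho_{0}^{\delta}\to\nabla\rho_{0}$ in $L^{\frac{4n}{n+6}}\cap L^{2}$ with norms bounded uniformly in $\delta$, and smooth $u_{0}$ into a divergence-free $u_{0}^{\delta}\to u_{0}$ in $\dot H^{\frac12+\frac n4}$. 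Then, starting from $u^{(0)}\equiv u_{0}^{\delta}$, for a given divergence-free $u^{(k)}$ I would solve the linear transport equation $\partial_{t}\rho^{(k+1)}+u^{(k)}\cdot\nabla\rho^{(k+1)}=0$, $\rho^{(k+1)}(0)=\rho_{0}^{\delta}$, by characteristics, and then the \emph{linear} momentum system
\[
\rho^{(k+1)}\partial_{t}u^{(k+1)}+\rho^{(k+1)}u^{(k)}\cdot\nabla u^{(k+1)}+(-\Delta)^{\frac12+\frac n4}u^{(k+1)}+\nabla p^{(k+1)}=0,\qquad \nabla\cdot u^{(k+1)}=0,
\]
with $u^{(k+1)}(0)=u_{0}^{\delta}$, whose solvability in the stated regularity class on a short interval follows from maximal regularity for the fractional Stokes operator together with the lower bound $\rho^{(k+1)}\ge\delta$.

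The heart of the argument is a set of a priori bounds on $[0,T^{*}]$ that are uniform in $k$ and $\delta$, with $T^{*}$ depending only on the data norms of Theorem \ref{Th1}. Since $\nabla\cdot u^{(k)}=0$, the transport equation conserves every Lebesgue norm of $\rho^{(k+1)}$, so $\|\rho^{(k+1)}(t)\|_{L^{\frac{2n}{n+2}}\cap L^{\infty}}=\|\rho_{0}^{\delta}\|_{L^{\frac{2n}{n+2}}\cap L^{\infty}}$, and the gradient obeys $\|\nabla\rho^{(k+1)}(t)\|_{L^{\frac{4n}{n+6}}\cap L^{2}}\le\|\nabla\rho_{0}^{\delta}\|_{L^{\frac{4n}{n+6}}\cap L^{2}}\exp\!\big(\int_{0}^{t}\|\nabla u^{(k)}\|_{L^{\infty}}\big)$. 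For the velocity I would run the energy hierarchy: testing the momentum equation by $u^{(k+1)}$ controls $\|\sqrt{\rho^{(k+1)}}u^{(k+1)}\|_{L^{2}}$ and $\|\Lambda^{\frac12+\frac n4}u^{(k+1)}\|_{L^{2}_{t}L^{2}_{x}}$; testing by $\partial_{t}u^{(k+1)}$ controls $\|\Lambda^{\frac12+\frac n4}u^{(k+1)}\|_{L^{\infty}_{t}L^{2}}$ and $\|u^{(k+1)}\|_{L^{2}_{t}\dot H^{1+\frac n2}}$, the convection and pressure terms being absorbed via the embedding $\dot H^{\frac12+\frac n4}\hookrightarrow L^{\frac{4n}{n-2}}$ and Kato--Ponce type fractional product estimates, the pressure itself recovered from $\Delta p^{(k+1)}=-\nabla\cdot\big(\rho^{(k+1)}\partial_{t}u^{(k+1)}+\rho^{(k+1)}u^{(k)}\cdot\nabla u^{(k+1)}\big)$; and finally, differentiating the momentum equation in time and testing by $t\,\partial_{t}u^{(k+1)}$ yields the time-weighted bound $t\|\sqrt{\rho^{(k+1)}}\partial_{t}u^{(k+1)}(t)\|_{L^{2}}^{2}+\int_{0}^{t}\tau\|\Lambda^{\frac12+\frac n4}\partial_{\tau}u^{(k+1)}\|_{L^{2}}^{2}\,d\tau\le C$, which for $t\ge\tau>0$ upgrades to the unweighted bounds $\sqrt{\rho}\partial_{t}u\in L^{\infty}(\tau,T^{*};L^{2})$ and $\partial_{t}u\in L^{2}(\tau,T^{*};\dot H^{\frac12+\frac n4})$. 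A continuity/bootstrap argument closes all of these on a common $[0,T^{*}]$ independent of $k$ and $\delta$.

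Next I would estimate the differences $\rho^{(k+1)}-\rho^{(k)}$ and $u^{(k+1)}-u^{(k)}$ in the low-order norms $L^{\infty}(0,T^{*};L^{\frac{2n}{n+2}})$ and $L^{\infty}(0,T^{*};L^{2})\cap L^{2}(0,T^{*};\dot H^{\frac12+\frac n4})$ (inserting a small time weight where the $\partial_{t}u$ bound is only weighted), showing after possibly shrinking $T^{*}$ that the sequence is Cauchy; its limit $(\rho^{\delta},u^{\delta})$ solves the $\delta$-regularized nonlinear problem, and the uniform higher bounds pass to it by weak-$*$ lower semicontinuity. Using the $\delta$-uniform bounds of the previous paragraph with the Aubin--Lions lemma for the compactness of $u^{\delta}$ and the stability of the linear transport equation for $\rho^{\delta}$, I would then pass to the limit $\delta\to0$ and obtain a strong solution $(\rho,u)$ of \eqref{INDNSE} with $\rho\ge0$ and the claimed regularity. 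For uniqueness, given two solutions with the same data I would subtract the equations and estimate the velocity difference in $L^{\infty}_{t}L^{2}\cap L^{2}_{t}\dot H^{\frac12+\frac n4}$ and the density difference in a negative-order norm; because only the time-weighted quantity $\int_{0}^{t}\tau\|\Lambda^{\frac12+\frac n4}\partial_{\tau}u\|_{L^{2}}^{2}\,d\tau$ is controlled, the differential inequality has a coefficient that is merely $L^{1}$ in $\tau$ near $t=0$, so the conclusion follows not from the classical Gronwall lemma but from the generalized Gronwall-type inequality of \cite{lijink}.

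The main obstacle throughout is precisely the absence of the compatibility condition \eqref{compatibility}: near $t=0$ there is no $L^{2}$ control of $\sqrt{\rho}\partial_{t}u$, only its time-weighted surrogate, which is what forces the weighted estimate in Step~3, the time-weighted low-order contraction in Step~4, and the non-standard Gronwall argument for uniqueness. Compounding this, the possible vacuum (with $\rho_{0}$ even compactly supported) means the pressure must be reconstructed and estimated purely in terms of $\sqrt{\rho}\partial_{t}u$, $\|\rho\|_{L^{\infty}}$ and the velocity norms, since no positive lower bound on $\rho$ survives the limit $\delta\to0$; keeping every elliptic estimate for $p$ in such vacuum-robust form, while still closing the fractional-order energy hierarchy at the critical exponent $\alpha=\frac12+\frac n4$, is the delicate technical core of the proof.
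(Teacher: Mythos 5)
Your sketch is correct and follows exactly the route the paper itself points to: the paper offers no detailed proof of Lemma \ref{LT}, only the remark that the local theory is constructed as in \cite{CK2003,lijink}, i.e.\ by regularizing the nonnegative initial density, solving a linearized transport/fractional-Stokes iteration, closing uniform-in-$\delta$ energy and time-weighted estimates (the weight replacing the missing compatibility condition \eqref{compatibility}), passing to the limit, and proving uniqueness via the generalized Gronwall inequality of \cite{lijink}. Since that is precisely the scheme you describe, your proposal matches the paper's intended argument.
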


\vskip .1in
\subsection{Exponential
decay of $\|\sqrt{\rho}u(t)\|_{L^{2}}^{2}$ and $\|\Lambda^{\frac{1}{2}+\frac{n}{4}}u(t)\|_{L^{2}}^{2}$}
We begin with the basic energy estimates.
\begin{lemma}\label{INDL221}
Under the assumptions of Theorem \ref{Th1}, the   solution $(\rho,u)$
of the system (\ref{INDNSE}) admits the following bound for any $t\geq0$,
\begin{eqnarray}&&
\|\rho(t)\|_{L^{\frac{2n}{n+2}}\cap L^{\infty}}\leq
\|\rho_{0}\|_{L^{\frac{2n}{n+2}}\cap L^{\infty}}, \label{addt030t002}\\
&&e^{\gamma t}\|\sqrt{\rho}u(t)\|_{L^{2}}^{2} + \int_{0}^{t}{e^{\gamma \tau}\|\Lambda^{\frac{1}{2}+\frac{n}{4}}u(\tau)\|_{L^{2}}^{2}\,d\tau}\leq  \|\sqrt{\rho_{0}}u_{0}\|_{L^{2}}^{2}.\label{addt030t001}
\end{eqnarray}
\end{lemma}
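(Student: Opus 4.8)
The plan is to establish the two bounds in Lemma \ref{INDL221} by a standard energy method, the only non-routine ingredient being the transport-structure of the density together with the key interpolation observation from the introduction. First I would record the density bounds \eqref{addt030t002}. Since $u$ is divergence-free and smooth enough on the local time interval, the continuity equation $\partial_t\rho + u\cdot\nabla\rho = 0$ is a pure transport equation, so $\rho(x,t)$ is (up to the measure-preserving flow of $u$) a rearrangement of $\rho_0$; hence $\|\rho(t)\|_{L^q}=\|\rho_0\|_{L^q}$ for every $q\in[1,\infty]$, in particular for $q=\frac{2n}{n+2}$ and $q=\infty$. (Rigorously one multiplies the continuity equation by $q\rho^{q-1}$, integrates, and uses $\Dv u=0$ to kill the transport term; the $L^\infty$ bound follows by letting $q\to\infty$ or from the maximum principle for the transport equation.) In particular $0\le\rho(t)$ is preserved.

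Next, for \eqref{addt030t001}, I would derive the basic energy identity: take the $L^2$ inner product of the momentum equation $\partial_t(\rho u)+\Dv(\rho u\otimes u)+\Lambda^{2\alpha}u+\nabla p=0$ with $u$, use the continuity equation to rewrite $\int \big(\partial_t(\rho u)+\Dv(\rho u\otimes u)\big)\cdot u\,dx = \frac12\frac{d}{dt}\int\rho|u|^2\,dx$, use $\Dv u=0$ together with integration by parts to eliminate the pressure term $\int\nabla p\cdot u\,dx=0$, and identify $\int \Lambda^{2\alpha}u\cdot u\,dx=\|\Lambda^{\alpha}u\|_{L^2}^2=\|\Lambda^{\frac12+\frac n4}u\|_{L^2}^2$ since $2\alpha=1+\frac n2$. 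This yields
\begin{equation}
\frac{1}{2}\frac{d}{dt}\|\sqrt{\rho}u(t)\|_{L^{2}}^{2}+\|\Lambda^{\frac{1}{2}+\frac{n}{4}}u(t)\|_{L^{2}}^{2}=0.\nonumber
\end{equation}
The crucial point now is the interpolation inequality stated in the introduction: by H\"older with exponents $\frac{4n}{n+2}$ and $\frac{4n}{n-2}$, then by the Sobolev embedding $\dot H^{\frac12+\frac n4}(\mathbb{R}^n)\hookrightarrow L^{\frac{4n}{n-2}}(\mathbb{R}^n)$ (valid since $\frac12+\frac n4-\frac n2=\frac n2-\frac n{\frac{4n}{n-2}}$), and finally using \eqref{addt030t002},
\begin{equation}
\|\sqrt{\rho}u\|_{L^{2}}\le\|\sqrt\rho\|_{L^{\frac{4n}{n+2}}}\|u\|_{L^{\frac{4n}{n-2}}}\le C\|\rho\|_{L^{\frac{2n}{n+2}}}^{\frac12}\|\Lambda^{\frac12+\frac n4}u\|_{L^2}\le C_0\|\Lambda^{\frac12+\frac n4}u\|_{L^2},\nonumber
\end{equation}
with $C_0$ depending only on $\|\rho_0\|_{L^{\frac{2n}{n+2}}}$. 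Thus $\|\Lambda^{\frac12+\frac n4}u\|_{L^2}^2\ge C_0^{-2}\|\sqrt\rho u\|_{L^2}^2$, and choosing $\gamma:=C_0^{-2}$ the energy identity gives the differential inequality $\frac{d}{dt}\|\sqrt\rho u\|_{L^2}^2 + 2\|\Lambda^{\frac12+\frac n4}u\|_{L^2}^2 \le -\gamma\|\sqrt\rho u\|_{L^2}^2 + (2-\text{something})\dots$; more cleanly, multiply the identity by $e^{\gamma t}$ and use $e^{\gamma t}\|\Lambda^{\frac12+\frac n4}u\|_{L^2}^2 \ge \gamma e^{\gamma t}\|\sqrt\rho u\|_{L^2}^2$ to get $\frac{d}{dt}\big(e^{\gamma t}\|\sqrt\rho u\|_{L^2}^2\big) + e^{\gamma t}\|\Lambda^{\frac12+\frac n4}u\|_{L^2}^2 \le 0$. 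Integrating in time from $0$ to $t$ yields exactly \eqref{addt030t001}.

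The main (and essentially only) obstacle is purely the rigor level: justifying that all these formal integrations by parts and the use of the density equation as a genuine transport equation are legitimate for the strong solution of Lemma \ref{LT}, in particular that $\rho u\in L^2$ and the flux terms decay at spatial infinity so that no boundary contributions appear in the whole space $\mathbb{R}^n$. This is handled by the regularity class in Lemma \ref{LT} (which gives $\sqrt\rho u\in L^2$, $u\in\dot H^{\frac12+\frac n4}\hookrightarrow L^{\frac{4n}{n-2}}$, and $\rho\in L^{\frac{2n}{n+2}}\cap L^\infty$, hence $\rho u\in L^2$ by the same H\"older argument) and by a standard approximation/mollification procedure. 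Everything else — the choice of $\gamma$, the embedding constants, the Gr\"onwall-type integration — is routine.
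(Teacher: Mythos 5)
Your proposal is correct and follows essentially the same route as the paper: conservation of all $L^q$ norms of $\rho$ from the transport equation with $\Dv u=0$, the basic energy identity, the key H\"older--Sobolev inequality $\|\sqrt{\rho}u\|_{L^{2}}\leq C\|\rho_{0}\|_{L^{\frac{2n}{n+2}}}^{\frac{1}{2}}\|\Lambda^{\frac{1}{2}+\frac{n}{4}}u\|_{L^{2}}$, and the choice $\gamma\sim\|\rho_{0}\|_{L^{\frac{2n}{n+2}}}^{-1}$ followed by multiplying by $e^{\gamma t}$ and integrating. The garbled intermediate differential inequality is harmless since your subsequent clean version (absorbing $\gamma e^{\gamma t}\|\sqrt{\rho}u\|_{L^{2}}^{2}$ into $e^{\gamma t}\|\Lambda^{\frac{1}{2}+\frac{n}{4}}u\|_{L^{2}}^{2}$) is exactly the paper's argument.
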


\begin{proof}
First, the non-negativeness of $\rho$ is a direct consequence of the maximum principle and $\rho_{0}\geq0$.
We multiply the equation $(\ref{INDNSE})_{1}$ by $|\rho|^{p-2}\rho$, integrate it over $\mathbb{R}^{n}$ and use $\nabla\cdot u=0$ to conclude
$$\frac{d}{dt}\|\rho(t)\|_{L^{p}}=0.$$
We then obtain
$\|\rho(t)\|_{L^{p}}\leq
\|\rho_{0}\|_{L^{p}}.$
Letting $p\rightarrow\infty$ yields
$\|\rho(t)\|_{L^{\infty}}\leq
\|\rho_{0}\|_{L^{\infty}}.$

In order to show (\ref{addt030t001}), we multiply equation $(\ref{INDNSE})_{2}$ by $u$, use the equation $(\ref{INDNSE})_{1}$ and integrate the resulting equation over $\mathbb{R}^{n}$ to show
\begin{eqnarray}\label{53ttyet003}
 \frac{1}{2}\frac{d}{dt} \|\sqrt{\rho}u(t)\|_{L^{2}}^{2} +\|\Lambda^{\frac{1}{2}+\frac{n}{4}}u\|_{L^{2}}^{2}=0.
\end{eqnarray}
Now it is easy to check
\begin{eqnarray}\label{key1}
\|\sqrt{\rho}u\|_{L^{2}}\leq \|\sqrt{\rho}\|_{L^{\frac{4n}{n+2}}}\|u\|_{L^{\frac{4n}{n-2}}} 
\leq C_{\star} \|{\rho}\|_{L^{\frac{2n}{n+2}}}^{\frac{1}{2}}
\|\Lambda^{\frac{1}{2}+\frac{n}{4}}u\|_{L^{2}}
\leq C_{\star}\|{\rho_{0}}\|_{L^{\frac{2n}{n+2}}}^{\frac{1}{2}}
\|\Lambda^{\frac{1}{2}+\frac{n}{4}}u\|_{L^{2}},
\end{eqnarray}
where $C_{\star}=C(n)$ is a  constant. Thus, we conclude from (\ref{53ttyet003}) that
\begin{eqnarray*}\label{53ttyet004}
 \frac{d}{dt} \|\sqrt{\rho}u(t)\|_{L^{2}}^{2} +\gamma\|\sqrt{\rho}u(t)\|_{L^{2}}^{2} +\|\Lambda^{\frac{1}{2}+\frac{n}{4}}u\|_{L^{2}}^{2}=0,
\end{eqnarray*}
where 
$$\gamma=\frac{1}{C_{\star}^{2}\|{\rho_{0}}\|_{L^{\frac{2n}{n+2}}}}.$$
An application of the Gronwall inequality yields \eqref{addt030t001}.
This completes the proof of Lemma \ref{INDL221}.
\end{proof}

Based on the estimate (\ref{addt030t001}), we now derive the same exponential decay estimate for $\|\Lambda^{\frac{1}{2}+\frac{n}{4}}u(t)\|_{L^{2}}^{2}$.
\begin{lemma}\label{INDL222}
Under the assumptions of Theorem \ref{Th1}, the   solution $(\rho,u)$
of the system (\ref{INDNSE}) admits the following bound for any $t\geq0$,
\begin{eqnarray} \label{addt030t003}
e^{\gamma t}\|\Lambda^{\frac{1}{2}+\frac{n}{4}}u(t)\|_{L^{2}}^{2} +\int_{0}^{t}{e^{\gamma \tau}(\|\Lambda^{1+\frac{n}{2}}u(\tau)\|_{L^{2}}^{2}+\|\sqrt{\rho}
\partial_{\tau}u(\tau)\|_{L^{2}}^{2})\,d\tau}\leq \widetilde{C_{1}},
\end{eqnarray}
where $\widetilde{C_{1}}$ depends only on $\|\rho_{0}\|_{L^{\frac{2n}{n+2}}}$, $\|\rho_{0}\|_{L^{\infty}}$, $\|\sqrt{\rho_{0}}u_{0}\|_{L^{2}}$ and $\|\Lambda^{\frac{1}{2}+\frac{n}{4}}u_{0}\|_{L^{2}}$.
\end{lemma}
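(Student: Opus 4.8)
\textbf{Proof strategy for Lemma \ref{INDL222}.} The plan is to perform the standard $\dot H^{\frac12+\frac n4}$-energy estimate on the momentum equation and absorb the convective term by the dissipation, crucially exploiting the gain that $\alpha=\frac12+\frac n4$ makes $\dot H^{1+\frac n2}$ scaling-critical for the nonlinearity. Concretely, I would apply $\Lambda^{\frac12+\frac n4}$ to $(\ref{INDNSE})_2$, pair with $\Lambda^{\frac12+\frac n4}u$, and also do the more robust ``$L^2$ estimate of $\rho\partial_t u$'': multiply $(\ref{INDNSE})_2$ by $\partial_t u$ and integrate. This yields an identity of the schematic form
\begin{equation}\label{plan-id}
\frac{d}{dt}\|\Lambda^{\frac12+\frac n4}u\|_{L^2}^2 + \|\sqrt{\rho}\,\partial_t u\|_{L^2}^2 + \|\Lambda^{1+\frac n2}u\|_{L^2}^2 \lesssim \int_{\mathbb R^n} \rho\,|u\cdot\nabla u|^2\,dx ,
\end{equation}
after using that $p$ is determined by the elliptic equation $\Delta p = -\Dv(\rho\partial_t u + \rho u\cdot\nabla u)$ and the pressure term carries the same bound. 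So the heart of the matter is controlling $\|\sqrt{\rho}\,(u\cdot\nabla u)\|_{L^2}^2$.

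For that term I would write $\|\sqrt\rho\,(u\cdot\nabla u)\|_{L^2}\le \|\sqrt\rho\|_{L^{\frac{4n}{n+2}}}\|u\cdot\nabla u\|_{L^{\frac{4n}{n-2}}}\le C\|\rho_0\|_{L^{\frac{2n}{n+2}}}^{1/2}\|u\cdot\nabla u\|_{L^{\frac{4n}{n-2}}}$, exactly mimicking the key inequality (\ref{key1}) from Lemma \ref{INDL221}, and then bound $\|u\cdot\nabla u\|_{L^{\frac{4n}{n-2}}}\le \|u\|_{L^\infty}\|\nabla u\|_{L^{\frac{4n}{n-2}}}$ or split it via Hölder and Sobolev embeddings so that the factors land on $\|\Lambda^{\frac12+\frac n4}u\|_{L^2}$ and $\|\Lambda^{1+\frac n2}u\|_{L^2}$. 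The scaling works out so that one gets $\int\rho|u\cdot\nabla u|^2 \le C\,\|\Lambda^{\frac12+\frac n4}u\|_{L^2}^{\theta}\,\|\Lambda^{1+\frac n2}u\|_{L^2}^2$ for a suitable power $\theta>0$; combined with the smallness in time of $\|\Lambda^{\frac12+\frac n4}u\|_{L^2}$ coming from the integrability $\int_0^t e^{\gamma\tau}\|\Lambda^{\frac12+\frac n4}u\|_{L^2}^2\,d\tau<\infty$ of Lemma \ref{INDL221}, the bad term can be absorbed into the $\|\Lambda^{1+\frac n2}u\|_{L^2}^2$ dissipation on the left after a short initial time and, on the initial interval, controlled by the local-existence bounds of Lemma \ref{LT}. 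I would therefore first establish a plain (non-weighted) bound $\sup_{[0,T]}\|\Lambda^{\frac12+\frac n4}u\|_{L^2}^2 + \int_0^T(\|\Lambda^{1+\frac n2}u\|^2+\|\sqrt\rho\partial_t u\|^2)\,dt\le C$ by a continuation argument, then upgrade to the exponential weight.

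For the exponential upgrade, I multiply the differential inequality by $e^{\gamma t}$ and use $\frac{d}{dt}(e^{\gamma t}\|\Lambda^{\frac12+\frac n4}u\|_{L^2}^2) = e^{\gamma t}(\frac{d}{dt}\|\Lambda^{\frac12+\frac n4}u\|_{L^2}^2 + \gamma\|\Lambda^{\frac12+\frac n4}u\|_{L^2}^2)$; the extra term $\gamma e^{\gamma t}\|\Lambda^{\frac12+\frac n4}u\|_{L^2}^2$ is dominated by $\gamma\,C\,e^{\gamma t}\|\Lambda^{1+\frac n2}u\|_{L^2}^2$ (Poincaré-type via the embedding, noting $\rho$ alone gives no Poincaré but the homogeneous norms are ordered, or alternatively by choosing $\gamma$ small relative to the dissipation constant after using interpolation $\|\Lambda^{\frac12+\frac n4}u\|_{L^2}\le\|\sqrt\rho u\|_{L^2}^{?}\cdots$ — in practice one just relies on $\int_0^t e^{\gamma\tau}\|\Lambda^{\frac12+\frac n4}u\|_{L^2}^2 d\tau\le\|\sqrt{\rho_0}u_0\|_{L^2}^2$ from (\ref{addt030t001}) to directly bound the time integral of this term). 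Integrating in time and using Lemma \ref{INDL221} to absorb the $\gamma$-term then yields (\ref{addt030t003}) with $\widetilde{C_1}$ of the stated dependence. The main obstacle, as usual in the vacuum setting, is that $\sqrt\rho\partial_t u$ and not $\partial_t u$ is controlled, so the pressure estimate and the closure of (\ref{plan-id}) must be done entirely in terms of $\sqrt\rho$-weighted norms and homogeneous Sobolev norms of $u$; getting the powers in the nonlinear estimate to match the critical scaling $\alpha=\frac12+\frac n4$ exactly — so that the convective term is genuinely borderline and hence absorbable using the time-smallness of $\|\Lambda^{\frac12+\frac n4}u\|_{L^2}$ rather than a smallness assumption on the data — is the delicate point.
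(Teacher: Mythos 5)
There is a genuine gap, and it is exactly at the point you yourself flag as delicate: the treatment of the convective term. In the paper the whole estimate closes because \emph{both} factors of $u\cdot\nabla u$ are put on the low norm: $\|u\cdot\nabla u\|_{L^{2}}\le \|u\|_{L^{\frac{4n}{n-2}}}\|\nabla u\|_{L^{\frac{4n}{n+2}}}\le C\|\Lambda^{\frac12+\frac n4}u\|_{L^{2}}^{2}$, so that after multiplying $(\ref{INDNSE})_2$ by $\partial_t u$ one gets
$\frac{d}{dt}\|\Lambda^{\frac12+\frac n4}u\|_{L^{2}}^{2}+\|\sqrt{\rho}\partial_t u\|_{L^{2}}^{2}\le C\|\Lambda^{\frac12+\frac n4}u\|_{L^{2}}^{2}\,\|\Lambda^{\frac12+\frac n4}u\|_{L^{2}}^{2}$,
and the Gronwall coefficient $\|\Lambda^{\frac12+\frac n4}u\|_{L^{2}}^{2}$ is time-integrable by Lemma \ref{INDL221}; no smallness and no absorption into the strong dissipation are needed, and $\|\Lambda^{1+\frac n2}u\|_{L^{2}}$ is recovered \emph{afterwards} from the Stokes regularity estimate $\|\Lambda^{1+\frac n2}u\|_{L^{2}}\le C\|\sqrt{\rho}\partial_t u\|_{L^{2}}+C\|\Lambda^{\frac12+\frac n4}u\|_{L^{2}}^{2}$, not from the energy identity itself. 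Your version instead estimates $\int\rho|u\cdot\nabla u|^{2}\le C\|\Lambda^{\frac12+\frac n4}u\|_{L^{2}}^{\theta}\|\Lambda^{1+\frac n2}u\|_{L^{2}}^{2}$ and proposes to absorb the full power of $\|\Lambda^{1+\frac n2}u\|_{L^{2}}^{2}$ into the dissipation using ``time-smallness'' of $\|\Lambda^{\frac12+\frac n4}u\|_{L^{2}}$ deduced from its time-integrability. That step fails: $\int_{0}^{\infty}e^{\gamma\tau}\|\Lambda^{\frac12+\frac n4}u\|_{L^{2}}^{2}\,d\tau<\infty$ does not give pointwise smallness of $\|\Lambda^{\frac12+\frac n4}u(t)\|_{L^{2}}$ after the (possibly tiny) local-existence interval; ruling out later spikes of this norm is precisely the content of the lemma, so the absorption argument is circular, and for large data the required bound $C\|\Lambda^{\frac12+\frac n4}u(t)\|_{L^{2}}^{\theta}\le\frac12$ is simply not available. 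A continuation/bootstrap cannot rescue it, because the bootstrap hypothesis would have to be a smallness that the large initial data already violates at $t=0$.

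Two secondary remarks. First, your schematic identity \eqref{plan-id} with $\|\Lambda^{1+\frac n2}u\|_{L^{2}}^{2}$ on the left is not directly obtainable: the $\partial_t u$ multiplier produces only $\|\sqrt{\rho}\partial_t u\|_{L^{2}}^{2}$ plus $\frac{d}{dt}\|\Lambda^{\frac12+\frac n4}u\|_{L^{2}}^{2}$, while pairing $\Lambda^{\frac12+\frac n4}$ of the equation with $\Lambda^{\frac12+\frac n4}u$ would create commutators of $\Lambda^{\frac12+\frac n4}$ with $\rho$ that the available density regularity (and the vacuum) cannot control at this stage; the paper's route through the Stokes system is the correct substitute. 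Second, your handling of the exponential weight (bounding $\gamma\int_0^t e^{\gamma\tau}\|\Lambda^{\frac12+\frac n4}u\|_{L^{2}}^{2}\,d\tau$ by \eqref{addt030t001} rather than invoking any Poincar\'e inequality) does agree with the paper; once the nonlinear term is estimated as $C\|\Lambda^{\frac12+\frac n4}u\|_{L^{2}}^{4}$, that part of your plan goes through verbatim.
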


\begin{proof}
First, multiplying the equation $(\ref{INDNSE})_{2}$ by $\partial_{t}u$, using $\nabla\cdot u=0$ and integrating by parts, we obtain
\begin{eqnarray}
 \frac{1}{2}\frac{d}{dt} \|\Lambda^{\frac{1}{2}+\frac{n}{4}}u(t)\|_{L^{2}}^{2} +\|\sqrt{\rho}
\partial_{t}u\|_{L^{2}}^{2}=-\int_{\mathbb{R}^{n}}\rho u\cdot\nabla u\cdot \partial_{t}u\,dx.\nonumber
\end{eqnarray}
With the aid of the Gagliardo-Nirenberg inequality, one gets
\begin{align} \label{cvrfyuq6}
-\int_{\mathbb{R}^{n}}\rho u\cdot\nabla u\cdot \partial_{t}u\,dx&\leq \|u \cdot \nabla u\|_{L^{2}}\|\sqrt{\rho}
\|_{L^{\infty}}\|\sqrt{\rho}
\partial_{t}u\|_{L^{2}}\nonumber\\
&\leq C\|{\rho_{0}}
\|_{L^{\infty}}^{\frac{1}{2}}\|u\|_{L^{\frac{4n}{n-2}}}\|\nabla u\|_{L^{\frac{4n}{n+2}}}\|\sqrt{\rho}
\partial_{t}u\|_{L^{2}}
\nonumber\\
&\leq C \|\Lambda^{\frac{1}{2}+\frac{n}{4}}u\|_{L^{2}}^{2}\|\sqrt{\rho}
\partial_{t}u\|_{L^{2}}\nonumber\\
&\leq \frac{1}{2}\|\sqrt{\rho}
\partial_{t}u\|_{L^{2}}^{2}+C \|\Lambda^{\frac{1}{2}+\frac{n}{4}}u\|_{L^{2}}^{2}
\|\Lambda^{\frac{1}{2}+\frac{n}{4}}u\|_{L^{2}}^{2}.
\end{align}
We therefore conclude that
\begin{eqnarray}
 \frac{d}{dt} \|\Lambda^{\frac{1}{2}+\frac{n}{4}}u(t)\|_{L^{2}}^{2} +\|\sqrt{\rho}
\partial_{t}u\|_{L^{2}}^{2}\leq C \|\Lambda^{\frac{1}{2}+\frac{n}{4}}u\|_{L^{2}}^{2}
\|\Lambda^{\frac{1}{2}+\frac{n}{4}}u\|_{L^{2}}^{2}.\nonumber
\end{eqnarray}
This implies
\begin{align*}
 \frac{d}{dt} (e^{\gamma t}\|\Lambda^{\frac{1}{2}+\frac{n}{4}}u(t)\|_{L^{2}}^{2}) +e^{\gamma t}\|\sqrt{\rho}
\partial_{t}u\|_{L^{2}}^{2}&\leq  \gamma e^{\gamma t}\|\Lambda^{\frac{1}{2}+\frac{n}{4}}u(t)\|_{L^{2}}^{2}\nonumber\\& \quad+C e^{\gamma t} \|\Lambda^{\frac{1}{2}+\frac{n}{4}}u\|_{L^{2}}^{2}
\|\Lambda^{\frac{1}{2}+\frac{n}{4}}u\|_{L^{2}}^{2}.
\end{align*}
Integrating in time and using (\ref{addt030t001}) yield
\begin{align} &
e^{\gamma t}\|\Lambda^{\frac{1}{2}+\frac{n}{4}}u(t)\|_{L^{2}}^{2} +\int_{0}^{t}e^{\gamma \tau}\|\sqrt{\rho}
\partial_{\tau}u(\tau)\|_{L^{2}}^{2}\,d\tau \nonumber\\&\leq \|\Lambda^{\frac{1}{2}+\frac{n}{4}}u_{0}\|_{L^{2}}^{2}+\gamma \int_{0}^{t}e^{\gamma \tau}\|\Lambda^{\frac{1}{2}+\frac{n}{4}}u(\tau)\|_{L^{2}}^{2}\,d\tau
\nonumber\\   & \quad + C\int_{0}^{t} e^{\gamma \tau} \|\Lambda^{\frac{1}{2}+\frac{n}{4}}u(\tau)\|_{L^{2}}^{2}
\|\Lambda^{\frac{1}{2}+\frac{n}{4}}u(\tau)\|_{L^{2}}^{2}\,d\tau
\nonumber\\ &\leq \widetilde{C}
+C\int_{0}^{t} e^{\gamma \tau} \|\Lambda^{\frac{1}{2}+\frac{n}{4}}u(\tau)\|_{L^{2}}^{2}
\|\Lambda^{\frac{1}{2}+\frac{n}{4}}u(\tau)\|_{L^{2}}^{2}\,d\tau.\nonumber
\end{align}
We thus get
\begin{align}\label{qfgtds3c}
&e^{\gamma t}\|\Lambda^{\frac{1}{2}+\frac{n}{4}}u(t)\|_{L^{2}}^{2} +\int_{0}^{t}e^{\gamma \tau}\|\sqrt{\rho}
\partial_{\tau}u(\tau)\|_{L^{2}}^{2}\,d\tau \nonumber\\
&\leq \widetilde{C}
+C\int_{0}^{t} e^{\gamma \tau} \|\Lambda^{\frac{1}{2}+\frac{n}{4}}u(\tau)\|_{L^{2}}^{2}
\|\Lambda^{\frac{1}{2}+\frac{n}{4}}u(\tau)\|_{L^{2}}^{2}\,d\tau.
\end{align}
By virtue of the Gronwall inequality and (\ref{addt030t001}), one has
\begin{eqnarray}\label{qfgtds3c11} e^{\gamma t}\|\Lambda^{\frac{1}{2}+\frac{n}{4}}u(t)\|_{L^{2}}^{2} \leq \widetilde{C_{1}}\exp\left[\int_{0}^{t}{
\|\Lambda^{\frac{1}{2}+\frac{n}{4}}u(\tau)\|_{L^{2}}^{2} \,d\tau}\right]\leq \widetilde{C_{1}},\end{eqnarray}
which along with (\ref{qfgtds3c}) also implies
\begin{eqnarray}\label{qfgtds3c22} \int_{0}^{t}e^{\gamma \tau}\|\sqrt{\rho}
\partial_{\tau}u(\tau)\|_{L^{2}}^{2}\,d\tau\leq \widetilde{C_{1}}.\end{eqnarray}
Now let us recall the Stokes equations
\begin{equation}\label{521tt005}
\left\{\begin{array}{l}
(-\Delta)^{\frac{1}{2}+\frac{n}{4}}u+\nabla p =-\rho\partial_tu
-\rho u\cdot\nabla u,
             \vspace{2mm}\\
\nabla\cdot u=0.
\end{array}\right.
\end{equation}
Applying the regularity properties of the Stokes system (\ref{521tt005}), it follows that
\begin{align}\label{csroplk}
\|\Lambda^{1+\frac{n}{2}}u\|_{L^{2}}&\leq C\|\rho\partial_tu\|_{L^{2}}
+C\|\rho u\cdot\nabla u\|_{L^{2}}\nonumber\\
&\leq C\|\sqrt{\rho}
\|_{L^{\infty}}\|\sqrt{\rho}\partial_tu\|_{L^{2}}
+C\|{\rho}
\|_{L^{\infty}}\| u\cdot\nabla u\|_{L^{2}}
\nonumber\\
&\leq C\|\sqrt{\rho}\partial_tu\|_{L^{2}}
+C\|u\|_{L^{\frac{4n}{n-2}}}\|\nabla u\|_{L^{\frac{4n}{n+2}}}\nonumber\\
&\leq C\|\sqrt{\rho}\partial_tu\|_{L^{2}}
+C\|\Lambda^{\frac{1}{2}+\frac{n}{4}}u\|_{L^{2}}^{2}.
\end{align}
This allows us to show
\begin{align} \label{521tt006}
 \int_{0}^{t}{e^{\gamma \tau}\|\Lambda^{1+\frac{n}{2}}u(\tau)\|_{L^{2}}^{2}\,d\tau}&\leq \int_{0}^{t}{e^{\gamma \tau}\|\sqrt{\rho}\partial_\tau u(\tau)\|_{L^{2}}^{2}\,d\tau}+\int_{0}^{t}{e^{\gamma \tau}\|\Lambda^{\frac{1}{2}+\frac{n}{4}}u(\tau)\|_{L^{2}}^{4}\,d\tau}
 \nonumber\\&\leq  \widetilde{C_{1}},
\end{align}
where we have used (\ref{addt030t001}), (\ref{qfgtds3c11}) and (\ref{qfgtds3c22}).
We thus complete the proof of the lemma by combining (\ref{qfgtds3c11}), (\ref{qfgtds3c22}) and (\ref{521tt006}).
\end{proof}

\subsection{Time-weighted estimates and exponential
decay of $\|\sqrt{\rho}\partial_{t}u(t)\|_{L^{2}}^{2}$ and other quantities}
The following lemma is crucial to derive the higher order estimates of the solutions.
\begin{lemma}\label{INDL223}
Under the assumptions of Theorem \ref{Th1}, the   solution $(\rho,u)$
of the system (\ref{INDNSE}) admits the following bound for any $t\geq0$,
\begin{eqnarray} \label{addok5251}
t\|\Lambda^{1+\frac{n}{2}}u(t)\|_{L^{2}}^{2}+t\|p(t)\|_{H^{1}}^{2}+t\|\sqrt{\rho}
\partial_{t}u(t)\|_{L^{2}}^{2} +\int_{0}^{t}{\tau\|\Lambda^{\frac{1}{2}+\frac{n}{4}}\partial_{\tau}u(\tau)
\|_{L^{2}}^{2}\,d\tau} \leq \widetilde{C_{1}}.
\end{eqnarray}
Moreover, for any $t\geq1$, the following estimates hold true
\begin{gather}
e^{\gamma t}\|\sqrt{\rho}\partial_{t}u(t)\|_{L^{2}}^{2} +\int_{1}^{t}{e^{\gamma \tau}\|\Lambda^{\frac{1}{2}+\frac{n}{4}}\partial_{\tau}u(\tau)
\|_{L^{2}}^{2}\,d\tau} \leq \widetilde{C_{1}}, \label{521tt007}\\
e^{\gamma t}
\|\Lambda^{1+\frac{n}{2}}u(t)\|_{L^{2}}^{2}+\int_{1}^{t}{e^{\gamma \tau}\|\Lambda^{1+\frac{n}{2}} u(\tau)
\|_{L^{2}}^{2}\,d\tau} \leq \widetilde{C_{1}}, \label{xfrmu8}\\
e^{\gamma t}
\|p(t)\|_{H^{1}}^{2} \leq \widetilde{C_{1}}, \label{xfxervg}
\end{gather}
where $\widetilde{C_{1}}$ depends only on $\|\rho_{0}\|_{L^{\frac{2n}{n+2}}}$, $\|\rho_{0}\|_{L^{\infty}}$, $\|\sqrt{\rho_{0}}u_{0}\|_{L^{2}}$ and $\|\Lambda^{\frac{1}{2}+\frac{n}{4}}u_{0}\|_{L^{2}}$.
\end{lemma}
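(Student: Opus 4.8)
My approach is to derive a differential inequality for $\|\sqrt{\rho}\partial_{t}u(t)\|_{L^{2}}^{2}$ by differentiating the momentum equation in time. Using $(\ref{INDNSE})_{1}$ and $\nabla\cdot u=0$, rewrite $(\ref{INDNSE})_{2}$ as $\rho\partial_{t}u+\rho u\cdot\nabla u+\Lambda^{1+\frac{n}{2}}u+\nabla p=0$, apply $\partial_{t}$, test against $\partial_{t}u$, and integrate; the pressure term drops since $\nabla\cdot\partial_{t}u=0$, and reorganizing the $\partial_{t}\rho$-terms via $\partial_{t}\rho=-\Dv(\rho u)$ and integration by parts yields
\[
\frac{1}{2}\frac{d}{dt}\|\sqrt{\rho}\partial_{t}u\|_{L^{2}}^{2}+\|\Lambda^{\frac{1}{2}+\frac{n}{4}}\partial_{t}u\|_{L^{2}}^{2}
=-2\int_{\R^{n}}\rho\,u\cdot\nabla\partial_{t}u\cdot\partial_{t}u\,dx
-\int_{\R^{n}}\rho\,\partial_{t}u\cdot\nabla u\cdot\partial_{t}u\,dx
+\int_{\R^{n}}\Dv(\rho u)\,(u\cdot\nabla u\cdot\partial_{t}u)\,dx.
\]
Each term on the right is estimated by H\"older's and the Gagliardo--Nirenberg inequality, the critical embedding $\dot{H}^{\frac{1}{2}+\frac{n}{4}}\hookrightarrow L^{\frac{4n}{n-2}}$ and $\|\nabla\partial_{t}u\|_{L^{\frac{4n}{n+2}}}\le C\|\Lambda^{\frac{1}{2}+\frac{n}{4}}\partial_{t}u\|_{L^{2}}$, together with the uniform bound $\|\rho\|_{L^{\frac{2n}{n+2}}\cap L^{\infty}}\le\|\rho_{0}\|_{L^{\frac{2n}{n+2}}\cap L^{\infty}}$ from Lemma \ref{INDL221} (used also in the interpolated form $\rho\in L^{n/2}\cap L^{n}$ for the last term, after expanding the divergence and shifting at most one derivative onto $\partial_{t}u$). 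The outcome should be an inequality of the shape
\[
\frac{d}{dt}\|\sqrt{\rho}\partial_{t}u\|_{L^{2}}^{2}+\|\Lambda^{\frac{1}{2}+\frac{n}{4}}\partial_{t}u\|_{L^{2}}^{2}
\le C\|\Lambda^{\frac{1}{2}+\frac{n}{4}}u\|_{L^{2}}^{2}\,\|\sqrt{\rho}\partial_{t}u\|_{L^{2}}^{2}+C\,g(t),
\]
where $g(t)$ is a finite sum of monomials in $\|\Lambda^{\frac{1}{2}+\frac{n}{4}}u\|_{L^{2}}$ and $\|\Lambda^{1+\frac{n}{2}}u\|_{L^{2}}$ which, by Lemmas \ref{INDL221}--\ref{INDL222}, is integrable against the weight $(1+\tau)e^{\gamma\tau}$.

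To prove (\ref{addok5251}), multiply this inequality by $t$ and integrate over $(0,t)$. The term $\int_{0}^{t}\|\sqrt{\rho}\partial_{\tau}u\|_{L^{2}}^{2}\,d\tau$ produced by $\frac{d}{dt}\big(t\|\sqrt{\rho}\partial_{t}u\|_{L^{2}}^{2}\big)$ is controlled by (\ref{addt030t003}), and since $\int_{0}^{\infty}\|\Lambda^{\frac{1}{2}+\frac{n}{4}}u(\tau)\|_{L^{2}}^{2}\,d\tau<\infty$ by Lemma \ref{INDL221}, Gronwall's inequality gives $t\|\sqrt{\rho}\partial_{t}u(t)\|_{L^{2}}^{2}+\int_{0}^{t}\tau\|\Lambda^{\frac{1}{2}+\frac{n}{4}}\partial_{\tau}u(\tau)\|_{L^{2}}^{2}\,d\tau\le\widetilde{C_{1}}$, with a constant independent of $\|\sqrt{\rho}\partial_{t}u\|_{L^{2}}$ at $t=0$ — which is precisely the point of inserting the weight $t$. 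The bounds for $t\|\Lambda^{1+\frac{n}{2}}u(t)\|_{L^{2}}^{2}$ and $t\|p(t)\|_{H^{1}}^{2}$ then follow by multiplying by $t$ the Stokes estimate $\|\Lambda^{1+\frac{n}{2}}u\|_{L^{2}}\le C\|\sqrt{\rho}\partial_{t}u\|_{L^{2}}+C\|\Lambda^{\frac{1}{2}+\frac{n}{4}}u\|_{L^{2}}^{2}$ from (\ref{csroplk}) and the analogous elliptic estimate $\|p\|_{H^{1}}\le C\|\sqrt{\rho}\partial_{t}u\|_{L^{2}}+C\|\Lambda^{\frac{1}{2}+\frac{n}{4}}u\|_{L^{2}}^{2}$ (obtained from $-\Delta p=\Dv(\rho\partial_{t}u+\rho u\cdot\nabla u)$ and $\rho\in L^{\frac{2n}{n+2}}\cap L^{\infty}$), combined with the exponential decay of $\|\Lambda^{\frac{1}{2}+\frac{n}{4}}u\|_{L^{2}}^{2}$ in Lemma \ref{INDL222}.

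For the decay estimates (\ref{521tt007})--(\ref{xfxervg}), I would use (\ref{addok5251}) at $t=1$ to obtain $\|\sqrt{\rho}\partial_{t}u(1)\|_{L^{2}}^{2}\le\widetilde{C_{1}}$ and then rerun the differential inequality on $[1,t]$ without the weight $t$. The extra ingredient is the analogue of the key inequality (\ref{key1}) for the divergence-free field $\partial_{t}u$, namely $\|\sqrt{\rho}\partial_{t}u\|_{L^{2}}\le C_{\star}\|\rho_{0}\|_{L^{\frac{2n}{n+2}}}^{1/2}\|\Lambda^{\frac{1}{2}+\frac{n}{4}}\partial_{t}u\|_{L^{2}}$, which lets us absorb a term $\gamma\|\sqrt{\rho}\partial_{t}u\|_{L^{2}}^{2}$ into the dissipation (after shrinking $\gamma$ if necessary). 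Multiplying by $e^{\gamma t}$, integrating, and applying Gronwall together with $\int_{1}^{\infty}\|\Lambda^{\frac{1}{2}+\frac{n}{4}}u\|_{L^{2}}^{2}\,d\tau<\infty$ and $\int_{1}^{\infty}e^{\gamma\tau}g(\tau)\,d\tau<\infty$ yields (\ref{521tt007}); then (\ref{xfrmu8}) and (\ref{xfxervg}) follow by inserting (\ref{521tt007}) and the decay of $\|\Lambda^{\frac{1}{2}+\frac{n}{4}}u\|_{L^{2}}^{2}$ into the Stokes and pressure estimates again, using that (\ref{addt030t003}) already bounds $\int_{1}^{t}e^{\gamma\tau}\|\Lambda^{1+\frac{n}{2}}u\|_{L^{2}}^{2}\,d\tau$.

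I expect the main obstacle to be the nonlinear estimates in the first step: each term coming from the time differentiation has to be split into (i) a small multiple of $\|\Lambda^{\frac{1}{2}+\frac{n}{4}}\partial_{t}u\|_{L^{2}}^{2}$, (ii) a Gronwall term $C\|\Lambda^{\frac{1}{2}+\frac{n}{4}}u\|_{L^{2}}^{2}\|\sqrt{\rho}\partial_{t}u\|_{L^{2}}^{2}$ with time-integrable coefficient, and (iii) a forcing $g$ decaying fast enough in $t$ — all of this without any positive lower bound on $\rho$ and without a compatibility condition, i.e. relying only on the $L^{\frac{2n}{n+2}}\cap L^{\infty}$ control of $\rho$ and the critical Sobolev embedding. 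In particular one must check that no "pure" term of the form $\|\Lambda^{\frac{1}{2}+\frac{n}{4}}u\|_{L^{2}}^{2}$ (which is only borderline non-integrable against $e^{\gamma\tau}$) survives, the terms of degree $\ge 4$ being harmless; the terms carrying a factor $\|\Lambda^{1+\frac{n}{2}}u\|_{L^{2}}$ are absorbed by Cauchy--Schwarz against the time integral bounded in Lemma \ref{INDL222}.
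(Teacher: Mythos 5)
Your proposal is correct and follows essentially the same route as the paper: differentiate the momentum equation in time, test with $\partial_{t}u$ (with the same divergence-form reorganization of the $\partial_{t}\rho$-terms), estimate the nonlinearities with the critical embedding and the Stokes bound (\ref{csroplk}) to get the differential inequality with forcing of degree $\geq 4$, close the time-weighted estimate by Gronwall using Lemmas \ref{INDL221}--\ref{INDL222}, and then repeat with the weight $e^{\gamma t}$ on $[1,t]$ before reading off the $\Lambda^{1+\frac{n}{2}}u$ and pressure bounds from the Stokes system. The only cosmetic difference is in (\ref{521tt007}): the paper controls the extra term $\gamma e^{\gamma t}\|\sqrt{\rho}\partial_{t}u\|_{L^{2}}^{2}$ through the already established integral bound (\ref{addt030t003}), whereas you absorb it into the dissipation via a (\ref{key1})-type inequality for $\partial_{t}u$ after shrinking $\gamma$; both work (and your cautionary remark about a pure $\|\Lambda^{\frac{1}{2}+\frac{n}{4}}u\|_{L^{2}}^{2}$ forcing is moot, since (\ref{addt030t001}) makes even that term $e^{\gamma\tau}$-integrable).
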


\begin{proof}
First, applying the time derivative $\partial_{t}$ to the equation $(\ref{INDNSE})_{2}$ gives
\begin{equation}\label{521tt008}
\rho\partial_{tt}u+\rho u\cdot\nabla \partial_{t}u
+(-\Delta)^{\frac{1}{2}+\frac{n}{4}}\partial_{t}u+\nabla \partial_{t}p =-\partial_{t}\rho\partial_tu
-\partial_{t}(\rho u)\cdot\nabla u.
\end{equation}
Multiplying (\ref{521tt008}) by $\partial_{t}u$ and using the equation $(\ref{INDNSE})_{1}$, we derive that
\begin{align}& \label{5yion6de}
 \frac{1}{2}\frac{d}{dt} \|\sqrt{\rho}\partial_{t}u(t)\|_{L^{2}}^{2} +\|\Lambda^{\frac{1}{2}+\frac{n}{4}}\partial_{t}u\|_{L^{2}}^{2}\nonumber\\&=
-\int_{\mathbb{R}^{n}}\partial_{t}\rho\partial_{t}u\cdot \partial_{t}u\,dx-\int_{\mathbb{R}^{n}}\partial_{t}(\rho u)\cdot\nabla u\cdot \partial_{t}u\,dx\nonumber\\
 &= -2\int_{\mathbb{R}^{n}}\rho u\cdot\nabla \partial_{t}u\cdot \partial_{t}u\,dx-\int_{\mathbb{R}^{n}}\rho \partial_{t}u\cdot\nabla u\cdot \partial_{t}u\,dx
 -\int_{\mathbb{R}^{n}}\rho u\cdot\nabla(u\cdot\nabla u\cdot \partial_{t}u)\,dx
 \nonumber\\
 &:= N_{1}+N_{2}+N_{3}.
\end{align}
By means of the embedding inequalities, one shows
\begin{align}
N_{1}&\leq  C\|\sqrt{\rho}\|_{L^{\infty}}\|\sqrt{\rho}\partial_tu\|_{L^{2}}
\|\nabla\partial_tu\|_{L^{\frac{4n}{n+2}}}\|u\|_{L^{\frac{4n}{n-2}}}\nonumber\\
&\leq  C\|\sqrt{\rho}\partial_tu\|_{L^{2}}
\|\Lambda^{\frac{1}{2}+\frac{n}{4}}\partial_tu\|_{L^{2}}\|\Lambda^{\frac{1}{2}
+\frac{n}{4}}
u\|_{L^{2}}
\nonumber\\
&\leq \frac{1}{8}\|\Lambda^{\frac{1}{2}+\frac{n}{4}}\partial_{t}u\|_{L^{2}}^{2}+
C\|\Lambda^{\frac{1}{2}+\frac{n}{4}}u\|_{L^{2}}^{2}\|\sqrt{\rho}\partial_tu\|_{L^{2}}^{2}
\nonumber
\end{align}
and
\begin{align}
N_{2}&\leq  C\|\sqrt{\rho}\|_{L^{\infty}}\|\sqrt{\rho}\partial_tu\|_{L^{2}}
\|\nabla u\|_{L^{\frac{4n}{n+2}}}\|\partial_tu\|_{L^{\frac{4n}{n-2}}}\nonumber\\
&\leq  C\|\sqrt{\rho}\partial_tu\|_{L^{2}}
\|\Lambda^{\frac{1}{2}+\frac{n}{4}}u\|_{L^{2}}\|\Lambda^{\frac{1}{2}
+\frac{n}{4}}
\partial_tu\|_{L^{2}}
\nonumber\\
&\leq \frac{1}{8}\|\Lambda^{\frac{1}{2}+\frac{n}{4}}\partial_{t}u\|_{L^{2}}^{2}+
C\|\Lambda^{\frac{1}{2}+\frac{n}{4}}u\|_{L^{2}}^{2}\|\sqrt{\rho}
\partial_tu\|_{L^{2}}^{2}
.\nonumber
\end{align}
For the term $N_{3}$, it can be bounded by
\begin{align}
N_{3}&\leq \left|
\int_{\mathbb{R}^{n}}\rho u\cdot\nabla u\cdot\nabla u\cdot \partial_{t}u\,dx\right|+\left|
\int_{\mathbb{R}^{n}}\rho u\cdot u\cdot\nabla^{2} u\cdot \partial_{t}u\,dx\right|
\nonumber\\& \quad+\left|
\int_{\mathbb{R}^{n}}\rho u\cdot u\cdot\nabla u\cdot \nabla\partial_{t}u\,dx\right|\nonumber\\&\leq C
\|{\rho}\|_{L^{\infty}}\|u\|_{L^{\frac{4n}{n-2}}}
\|\nabla u\|_{L^{\frac{4n}{n+2}}}^{2}\|\partial_tu\|_{L^{\frac{4n}{n-2}}}+
C\|\sqrt{\rho}\|_{L^{\infty}}\|\sqrt{\rho}\partial_tu\|_{L^{2}}
\|\nabla^{2} u\|_{L^{n}}\|u\|_{L^{\frac{4n}{n-2}}}^{2}\nonumber\\& \quad+C
\|{\rho}\|_{L^{\infty}}\|u\|_{L^{\frac{4n}{n-2}}}^{2}
\|\nabla u\|_{L^{\frac{4n}{n+2}}}\|\nabla\partial_tu\|_{L^{\frac{4n}{n+2}}}
\nonumber\\&\leq C\|\Lambda^{\frac{1}{2}+\frac{n}{4}}u\|_{L^{2}}
\|\Lambda^{\frac{1}{2}+\frac{n}{4}}u\|_{L^{2}}^{2}
\|\Lambda^{\frac{1}{2}+\frac{n}{4}}\partial_{t}u\|_{L^{2}}+
C\|\sqrt{\rho}\partial_tu\|_{L^{2}}
\|\Lambda^{1+\frac{n}{2}}u\|_{L^{2}}\|\Lambda^{\frac{1}{2}+\frac{n}{4}}u\|_{L^{2}}^{2}
\nonumber\\& \quad+C
\|\Lambda^{\frac{1}{2}+\frac{n}{4}}u\|_{L^{2}}^{2}
\|\Lambda^{\frac{1}{2}+\frac{n}{4}}u\|_{L^{2}}
\|\Lambda^{\frac{1}{2}+\frac{n}{4}}\partial_{t}u\|_{L^{2}}
\nonumber\\&\leq C\|\Lambda^{\frac{1}{2}+\frac{n}{4}}u\|_{L^{2}}
\|\Lambda^{\frac{1}{2}+\frac{n}{4}}u\|_{L^{2}}^{2}
\|\Lambda^{\frac{1}{2}+\frac{n}{4}}\partial_{t}u\|_{L^{2}}+
C\|\sqrt{\rho}\partial_tu\|_{L^{2}}^{2}
\|\Lambda^{\frac{1}{2}+\frac{n}{4}}u\|_{L^{2}}^{2}
\nonumber\\& \quad +C\|\sqrt{\rho}\partial_tu\|_{L^{2}}
\|\Lambda^{\frac{1}{2}+\frac{n}{4}}u\|_{L^{2}}^{4}+C
\|\Lambda^{\frac{1}{2}+\frac{n}{4}}u\|_{L^{2}}^{2}
\|\Lambda^{\frac{1}{2}+\frac{n}{4}}u\|_{L^{2}}
\|\Lambda^{\frac{1}{2}+\frac{n}{4}}\partial_{t}u\|_{L^{2}}
\nonumber\\
&\leq \frac{1}{8}\|\Lambda^{\frac{1}{2}+\frac{n}{4}}\partial_{t}u\|_{L^{2}}^{2}+
C\|\Lambda^{\frac{1}{2}+\frac{n}{4}}u\|_{L^{2}}^{6}
+C\|\Lambda^{\frac{1}{2}+\frac{n}{4}}u\|_{L^{2}}^{2}\|\sqrt{\rho}
\partial_tu\|_{L^{2}}^{2},
\nonumber
\end{align}
where we have used the following fact due to (\ref{csroplk})
\begin{eqnarray}\label{xfrdf217}
\|\Lambda^{1+\frac{n}{2}}u\|_{L^{2}}\leq C \|\sqrt{\rho}\partial_tu\|_{L^{2}}
+C\|\Lambda^{\frac{1}{2}+\frac{n}{4}}u\|_{L^{2}}^{2}.
\end{eqnarray}
Substituting the above estimates into (\ref{5yion6de}) yields
\begin{eqnarray}\label{vcgy7uode}
\frac{d}{dt} \|\sqrt{\rho}\partial_{t}u(t)\|_{L^{2}}^{2} +\|\Lambda^{\frac{1}{2}+\frac{n}{4}}\partial_{t}u\|_{L^{2}}^{2}\leq C\|\Lambda^{\frac{1}{2}+\frac{n}{4}}u\|_{L^{2}}^{2}\|\sqrt{\rho}
\partial_tu\|_{L^{2}}^{2}+C\|\Lambda^{\frac{1}{2}+\frac{n}{4}}u\|_{L^{2}}^{6},
\end{eqnarray}
which implies
\begin{align*}
\frac{d}{dt}(t\|\sqrt{\rho}\partial_{t}u(t)\|_{L^{2}}^{2}) +t\|\Lambda^{\frac{1}{2}+\frac{n}{4}}\partial_{t}u(t)\|_{L^{2}}^{2}
\leq& C\|\Lambda^{\frac{1}{2}+\frac{n}{4}}u\|_{L^{2}}^{2}(t\|\sqrt{\rho}
\partial_tu\|_{L^{2}}^{2})\\
& +\|\sqrt{\rho}\partial_{t}u(t)\|_{L^{2}}^{2}+Ct\|\Lambda^{\frac{1}{2}+\frac{n}{4}}u\|_{L^{2}}^{6}.
\end{align*}
From (\ref{addt030t001}) and (\ref{addt030t003}),  and by the Gronwall inequality, one has
\begin{eqnarray}\label{vcxcdft67e}
t\|\sqrt{\rho}\partial_{t}u(t)\|_{L^{2}}^{2} +\int_{0}^{t}{\tau\|\Lambda^{\frac{1}{2}+\frac{n}{4}}\partial_{\tau}u(\tau)
\|_{L^{2}}^{2}\,d\tau} \leq \widetilde{C_{1}}.\end{eqnarray}
Moreover, we deduce from (\ref{vcgy7uode}) that
\begin{align*}
\frac{d}{dt}(e^{\gamma t}\|\sqrt{\rho}\partial_{t}u(t)\|_{L^{2}}^{2}) +e^{\gamma t}\|\Lambda^{\frac{1}{2}+\frac{n}{4}}\partial_{t}u(t)\|_{L^{2}}^{2}\leq & C\|\Lambda^{\frac{1}{2}+\frac{n}{4}}u\|_{L^{2}}^{2}(e^{\gamma t}\|\sqrt{\rho}
\partial_tu\|_{L^{2}}^{2})\nonumber\\ & +\gamma e^{\gamma t}\|\sqrt{\rho}\partial_{t}u(t)\|_{L^{2}}^{2}+Ce^{\gamma t}\|\Lambda^{\frac{1}{2}+\frac{n}{4}}u\|_{L^{2}}^{6}.
\end{align*}
Integrating it in time and making use of (\ref{addt030t003}) as well as (\ref{vcxcdft67e}) lead to
\begin{align}&
e^{\gamma t}\|\sqrt{\rho}\partial_{t}u(t)\|_{L^{2}}^{2} +\int_{1}^{t}e^{\gamma \tau}\|\Lambda^{\frac{1}{2}+\frac{n}{4}}\partial_{\tau}u(\tau)\|_{L^{2}}^{2}\,d\tau
\nonumber\\&\leq  \widetilde{C_{1}}+ C\int_{1}^{t}\|\Lambda^{\frac{1}{2}+\frac{n}{4}}u(\tau)\|_{L^{2}}^{2}(e^{\gamma \tau}\|\sqrt{\rho}
\partial_\tau u(\tau)\|_{L^{2}}^{2})\,d\tau+\gamma \int_{1}^{t}e^{\gamma \tau}\|\sqrt{\rho}\partial_{\tau}u(\tau)\|_{L^{2}}^{2}\,d\tau\nonumber\\& \quad +C\int_{1}^{t}(e^{\gamma \tau}\|\Lambda^{\frac{1}{2}+\frac{n}{4}}u(\tau)\|_{L^{2}}^{2})
\|\Lambda^{\frac{1}{2}+\frac{n}{4}}u(\tau)\|_{L^{2}}^{4}\,d\tau
\nonumber\\&\leq  \widetilde{C_{1}}+ C\int_{1}^{t}\|\Lambda^{\frac{1}{2}+\frac{n}{4}}u(\tau)\|_{L^{2}}^{2}(e^{\gamma \tau}\|\sqrt{\rho}
\partial_\tau u(\tau)\|_{L^{2}}^{2})\,d\tau.
\nonumber
\end{align}
By the same argument adopted in dealing with (\ref{qfgtds3c11}) and (\ref{qfgtds3c22}), we thus deduce
$$e^{\gamma t}\|\sqrt{\rho}\partial_{t}u(t)\|_{L^{2}}^{2} +\int_{1}^{t}{e^{\gamma \tau}\|\Lambda^{\frac{1}{2}+\frac{n}{4}}\partial_{\tau}u(\tau)
\|_{L^{2}}^{2}\,d\tau} \leq \widetilde{C_{1}}.$$
By means of (\ref{xfrdf217}), (\ref{521tt007}) and (\ref{addt030t003}), we have
\begin{align} &
e^{\gamma t}
\|\Lambda^{1+\frac{n}{2}}u\|_{L^{2}}^{2}+\int_{1}^{t}{e^{\gamma \tau}\|\Lambda^{1+\frac{n}{2}} u(\tau)
\|_{L^{2}}^{2}\,d\tau} \nonumber\\&\leq
Ce^{\gamma t}(\|\sqrt{\rho}\partial_tu\|_{L^{2}}^{2}
+\|\Lambda^{\frac{1}{2}+\frac{n}{4}}u\|_{L^{2}}^{4})+\int_{1}^{t}{e^{\gamma \tau}(\|\sqrt{\rho}\partial_\tau u(\tau)\|_{L^{2}}^{2}
+\|\Lambda^{\frac{1}{2}+\frac{n}{4}}u(\tau)\|_{L^{2}}^{4})\,d\tau}\nonumber\\&=
Ce^{\gamma t}\|\sqrt{\rho}\partial_tu\|_{L^{2}}^{2}
+Ce^{-\gamma t}(e^{\gamma t}\|\Lambda^{\frac{1}{2}+\frac{n}{4}}u\|_{L^{2}}^{2})^{2}\nonumber\\& \quad
+\int_{1}^{t}{\left(e^{\gamma \tau}\|\sqrt{\rho}\partial_\tau u(\tau)\|_{L^{2}}^{2}
+e^{-\gamma \tau}(e^{\gamma \tau}\|\Lambda^{\frac{1}{2}+\frac{n}{4}}u(\tau)\|_{L^{2}}^{2})
(e^{\gamma \tau}\|\Lambda^{\frac{1}{2}+\frac{n}{4}}u(\tau)\|_{L^{2}}^{2})\right)\,d\tau}
\nonumber\\&\leq
\widetilde{C_{1}}+\widetilde{C_{1}}\int_{1}^{t}{e^{\gamma \tau}\|\Lambda^{\frac{1}{2}+\frac{n}{4}}u(\tau)\|_{L^{2}}^{2}\,d\tau}\nonumber\\&\leq
\widetilde{C_{1}}.\nonumber
\end{align}
We thus obtain (\ref{xfrmu8}).
It follows from the Stokes system (\ref{521tt005}) that
\begin{align*}
\|\nabla p\|_{L^{2}}\leq C\|\rho\partial_tu\|_{L^{2}}
+C\|\rho u\cdot\nabla u\|_{L^{2}}
\leq C\|\sqrt{\rho}\partial_tu\|_{L^{2}}
+C \|\Lambda^{\frac{1}{2}+\frac{n}{4}}u\|_{L^{2}}^{2},
\end{align*}
where we have used the estimates in (\ref{cvrfyuq6}).
Similarly, we obtain
\begin{align}
\|p\|_{L^{2}}&\leq  C\|\Lambda^{-1}(\rho\partial_tu\|_{L^{2}})\|_{L^{2}}
+C\|\Lambda^{-1}(\rho u\cdot\nabla u)\|_{L^{2}}\nonumber\\
&\leq C\|\rho\partial_tu\|_{L^{\frac{2n}{n+2}}}
+C\|\rho u\cdot\nabla u\|_{L^{\frac{2n}{n+2}}}
\nonumber\\
&\leq  C\|\sqrt{\rho}\|_{L^{n}}\|\sqrt{\rho}\partial_tu\|_{L^{2}}
+C\|\rho \|_{L^{n}}\|u\cdot\nabla u\|_{L^{2}}
\nonumber\\
&\leq  C\|\sqrt{\rho_{0}}\|_{L^{n}}\|\sqrt{\rho}\partial_tu\|_{L^{2}}
+C\|\rho_{0}\|_{L^{n}}\|u\cdot\nabla u\|_{L^{2}}
\nonumber\\
&\leq C\|\sqrt{\rho}\partial_tu\|_{L^{2}}
+C \|\Lambda^{\frac{1}{2}+\frac{n}{4}}u\|_{L^{2}}^{2}.\nonumber
\end{align}
As before, we therefore obtain for all $t\geq1$,
\begin{eqnarray}
e^{\gamma t}
\|p\|_{H^{1}}^{2}\leq Ce^{\gamma t}\|\sqrt{\rho}\partial_tu\|_{L^{2}}^{2}
+C e^{\gamma t}\|\Lambda^{\frac{1}{2}+\frac{n}{4}}u\|_{L^{2}}^{4} \leq \widetilde{C_{1}}.\nonumber
\end{eqnarray}
This completes the proof of Lemma \ref{INDL223}.
\end{proof}

\subsection{Uniform in time bound of $\int_{0}^{t}{\|\nabla u(\tau)\|_{L^{\infty}} \,d\tau}$ and gradient of $\rho$}

The following estimates will be used to show the uniqueness of  solutions and the exponential decay of other quantities.
\begin{lemma}\label{INDL224}
Under the assumptions of Theorem \ref{Th1}, the   solution $(\rho,u)$
of the system (\ref{INDNSE}) admits the following bounds for any $t\geq0$,
\begin{gather}
\int_{0}^{t}{\|\nabla u(\tau)
\|_{L^{\infty}} \,d\tau} \leq \widetilde{C_{1}}, \label{xctoplk243}\\
\|\nabla\rho(t)\|_{L^{\frac{4n}{n+6}}} \leq \widetilde{C_{1}}\|\nabla\rho_{0}\|_{L^{\frac{4n}{n+6}}}, \label{521tt009}\\
\|\nabla\rho(t)\|_{L^{2}}  \leq \widetilde{C_{1}}\|\nabla\rho_{0}\|_{L^{2}}, \label{adds521tt009}
\end{gather}
where $\widetilde{C_{1}}$ depends only on $\|\rho_{0}\|_{L^{\frac{2n}{n+2}}}$, $\|\rho_{0}\|_{L^{\infty}}$, $\|\sqrt{\rho_{0}}u_{0}\|_{L^{2}}$ and $\|\Lambda^{\frac{1}{2}+\frac{n}{4}}u_{0}\|_{L^{2}}$.
\end{lemma}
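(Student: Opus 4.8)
The plan is to first establish the $L^\infty$-in-time bound \eqref{xctoplk243} on $\int_0^t\|\nabla u\|_{L^\infty}\,d\tau$, and then feed it into a Grönwall argument on the transport equation $(\ref{INDNSE})_1$ differentiated in space to get the density-gradient bounds \eqref{521tt009} and \eqref{adds521tt009}. For the first bound, the key point is to control $\|\nabla u\|_{L^\infty}$ by a norm of $\Lambda^{1+\frac{n}{2}}u$ via Sobolev embedding: since $\dot H^{1+\frac n2}(\mathbb R^n)\hookrightarrow \dot W^{1,\infty}$ fails by an endpoint, one uses instead the interpolation/embedding $\|\nabla u\|_{L^\infty}\le C\|\Lambda^{1+\frac n2}u\|_{L^{\frac{4n}{n-2}}}$ (which is an honest embedding in the relevant range, or alternatively $\|\nabla u\|_{L^\infty}\lesssim \|\Lambda^{1+\frac n2}u\|_{L^2}^{\theta}\|\Lambda^{1+\frac n2}u\|_{L^{\frac{4n}{n-2}}}^{1-\theta}$). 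Then one applies $L^p$-regularity for the Stokes system \eqref{521tt005} in $L^{\frac{4n}{n-2}}$ to bound $\|\Lambda^{1+\frac n2}u\|_{L^{\frac{4n}{n-2}}}$ by $\|\rho\partial_t u\|_{L^{\frac{4n}{n-2}}}+\|\rho u\cdot\nabla u\|_{L^{\frac{4n}{n-2}}}$, and estimates each term using $\|\rho\|_{L^\infty}\le\|\rho_0\|_{L^\infty}$ together with Hölder and the Gagliardo--Nirenberg inequalities, ultimately reducing everything to quantities already controlled: $\|\sqrt\rho\partial_t u\|_{L^2}$, $\|\Lambda^{\frac12+\frac n4}\partial_t u\|_{L^2}$ and $\|\Lambda^{\frac12+\frac n4}u\|_{L^2}$. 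Integrating in time and invoking Lemma \ref{INDL223}, in particular the time-weighted bound $\int_0^t\tau\|\Lambda^{\frac12+\frac n4}\partial_\tau u\|_{L^2}^2\,d\tau\le\widetilde{C_1}$ and the exponential decay \eqref{521tt007}, \eqref{xfrmu8} for $t\ge1$, gives a finite bound: near $\tau=0$ one absorbs the $\tau^{-1}$ singularity using Cauchy--Schwarz against $\int_0^1 \tau\,\|\Lambda^{\frac12+\frac n4}\partial_\tau u\|_{L^2}^2\,d\tau$ and $\int_0^1\tau^{-1}\,d\tau$... which diverges, so more care is needed: instead one splits the time integral at $\tau=1$, uses on $[0,1]$ the bound $\int_0^1\|\nabla u\|_{L^\infty}\,d\tau\le (\int_0^1\tau^{-1}d\tau)^{1/2}(\int_0^1\tau\|\nabla u\|_{L^\infty}^2d\tau)^{1/2}$ after first bounding $\tau\|\nabla u\|_{L^\infty}^2$ by $\tau\|\Lambda^{1+\frac n2}u\|_{L^2}\|\Lambda^{1+\frac n2}u\|_{L^{\frac{4n}{n-2}}}$ and noting $\int_0^1\|\Lambda^{1+\frac n2}u\|_{L^{\frac{4n}{n-2}}}^2\,d\tau$ is finite by Stokes regularity plus \eqref{addt030t003}; on $[1,t]$ the exponential weights make the integral converge.

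Once \eqref{xctoplk243} is in hand, the bounds on $\nabla\rho$ follow from the standard transport estimate. Applying $\nabla$ to $(\ref{INDNSE})_1$ gives
\begin{equation*}
\partial_t\nabla\rho + u\cdot\nabla(\nabla\rho) = -\nabla u\cdot\nabla\rho,
\end{equation*}
and since $\nabla\cdot u=0$, multiplying by $|\nabla\rho|^{q-2}\nabla\rho$ (with $q=\frac{4n}{n+6}$ and $q=2$ respectively) and integrating yields
\begin{equation*}
\frac{d}{dt}\|\nabla\rho(t)\|_{L^q}\le \|\nabla u(t)\|_{L^\infty}\|\nabla\rho(t)\|_{L^q}.
\end{equation*}
Grönwall's inequality then gives $\|\nabla\rho(t)\|_{L^q}\le \|\nabla\rho_0\|_{L^q}\exp\!\big(\int_0^t\|\nabla u\|_{L^\infty}\,d\tau\big)\le\widetilde{C_1}\|\nabla\rho_0\|_{L^q}$, which is exactly \eqref{521tt009} and \eqref{adds521tt009}.

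The main obstacle is the first estimate \eqref{xctoplk243}: there is no positive lower bound on $\rho$ and no compatibility condition, so $\|\sqrt\rho\partial_t u\|_{L^2}$ is only controlled for $t>0$ with the time-weight $t$, not uniformly up to $t=0$. Thus the time integral of $\|\nabla u\|_{L^\infty}$ must be split at $t=1$ and handled by genuinely different arguments on the two pieces — a Cauchy--Schwarz trick exploiting the time-weighted estimate \eqref{addok5251} near the origin, and the exponential decay estimates \eqref{521tt007}, \eqref{xfrmu8}, \eqref{xfxervg} of Lemma \ref{INDL223} on $[1,\infty)$ — while also being careful that the Stokes $L^{\frac{4n}{n-2}}$-regularity estimate and the Gagliardo--Nirenberg embeddings used to pass from $\Lambda^{1+\frac n2}u$ to $\nabla u\in L^\infty$ are valid in all dimensions $n\ge3$ at the critical exponent $\alpha=\frac12+\frac n4$.
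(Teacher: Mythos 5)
Your reduction of \eqref{521tt009} and \eqref{adds521tt009} to \eqref{xctoplk243} via the transport equation and Gr\"onwall is exactly the paper's argument and is fine. The genuine gap is in your proof of \eqref{xctoplk243} on the time interval $[0,1]$. You correctly identify the obstruction (without the compatibility condition only the weighted bound $\int_0^t\tau\|\Lambda^{\frac12+\frac n4}\partial_\tau u\|_{L^2}^2\,d\tau\le \widetilde{C_1}$ of \eqref{addok5251} is available), but your proposed fix does not resolve it: the Cauchy--Schwarz you finally write, $\int_0^1\|\nabla u\|_{L^\infty}\,d\tau\le(\int_0^1\tau^{-1}\,d\tau)^{1/2}(\int_0^1\tau\|\nabla u\|_{L^\infty}^2\,d\tau)^{1/2}$, is vacuous because $\int_0^1\tau^{-1}\,d\tau=+\infty$, and the supporting claim that $\int_0^1\|\Lambda^{1+\frac n2}u\|_{L^{\frac{4n}{n-2}}}^2\,d\tau$ is finite ``by Stokes regularity plus \eqref{addt030t003}'' is unjustified: Stokes regularity in $L^{\frac{4n}{n-2}}$ converts this into $\int_0^1\|\Lambda^{\frac12+\frac n4}\partial_\tau u\|_{L^2}^2\,d\tau$ (plus lower-order terms), and precisely this unweighted quantity is \emph{not} controlled up to $\tau=0$ in the absence of the compatibility condition --- this is the very point stressed in the paper (and the reason the uniqueness proof needs the Gr\"onwall-type Lemma \ref{UNILemma}). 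So, as written, the estimate of $\int_0^1\|\nabla u\|_{L^\infty}\,d\tau$ does not close.

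The paper's route avoids the endpoint $p=\frac{4n}{n-2}$ altogether: it fixes $2<p<\frac{4n}{n-2}$, interpolates as in \eqref{keyusef1} so that $\|\rho\partial_t u\|_{L^p}\le C\|\sqrt{\rho}\partial_t u\|_{L^2}^{1-\theta}\|\Lambda^{\frac12+\frac n4}\partial_t u\|_{L^2}^{\theta}$ with $\theta=\frac{2n(p-2)}{(n+2)p}<1$, and then on $[0,1]$ writes this as $\tau^{-\frac12}(\tau^{\frac12}\|\sqrt{\rho}\partial_\tau u\|_{L^2})^{1-\theta}(\tau^{\frac12}\|\Lambda^{\frac12+\frac n4}\partial_\tau u\|_{L^2})^{\theta}$; after using $\tau\|\sqrt\rho\partial_\tau u\|_{L^2}^2\le\widetilde{C_1}$ from \eqref{addok5251}, a H\"older in time against $\int_0^1\tau\|\Lambda^{\frac12+\frac n4}\partial_\tau u\|_{L^2}^2\,d\tau$ leaves a singular factor $\tau^{-\beta}$ with $\beta<1$ (because $\theta<1$), which is integrable --- see \eqref{521tt010}. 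The price of leaving the endpoint is that $\|\nabla u\|_{L^\infty}$ must then be recovered by the two-norm Gagliardo--Nirenberg inequality \eqref{keyusef3} between $\|\nabla u\|_{L^{\frac{4n}{n+2}}}$ and $\|\Lambda^{1+\frac n2}u\|_{L^p}$, rather than by a single endpoint bound $\|\nabla u\|_{L^\infty}\le C\|\Lambda^{1+\frac n2}u\|_{L^{\frac{4n}{n-2}}}$ (which, being a one-norm homogeneous embedding, is itself delicate). Your alternative interpolation $\|\nabla u\|_{L^\infty}\lesssim\|\Lambda^{1+\frac n2}u\|_{L^2}^{\theta}\|\Lambda^{1+\frac n2}u\|_{L^{\frac{4n}{n-2}}}^{1-\theta}$ could in principle be developed into a correct proof by the same mechanism (the endpoint norm then enters with power $1-\theta<1$, so the weighted estimate suffices after H\"older in time), but you did not carry this out, and the argument you actually wrote for $[0,1]$ fails.
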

\begin{rem}\rm
We remark that the bound (\ref{521tt009}) will be used to show the uniqueness, while the bound (\ref{adds521tt009}) will be used to derive the exponential decay of $\|\Lambda^{1+\frac{n}{2}}\partial_{t}u(t)\|_{L^{2}}^{2}$ and other quantities.
\end{rem}

\begin{proof}
First, it is easy to check that for any $2<p<\frac{4n}{n-2}$,
\begin{align} \label{keyusef1}
\|\rho\partial_tu\|_{L^{p}}&\leq C\|\rho\partial_tu\|_{L^{2}}^{1-\frac{2n(p-2)}
{(n+2)p}}
\|\rho\partial_tu\|_{L^{\frac{4n}{n-2}}}^{\frac{2n(p-2)}{(n+2)p}}\nonumber\\
&\leq C\|\sqrt{\rho}\|_{L^{\infty}}^{1-\frac{2n(p-2)}{(n+2)p}}\|\sqrt{\rho}
\partial_tu\|_{L^{2}}^{1-\frac{2n(p-2)}{(n+2)p}}
\|{\rho}\|_{L^{\infty}}^{\frac{2n(p-2)}{(n+2)p}}\|\partial_tu
\|_{L^{\frac{4n}{n-2}}}^{\frac{2n(p-2)}{(n+2)p}}
\nonumber\\
&\leq C
\|\sqrt{\rho}\partial_tu\|_{L^{2}}^{1-\frac{2n(p-2)}{(n+2)p}}
\|\Lambda^{\frac{1}{2}+\frac{n}{4}}\partial_tu\|_{L^{2}}^{\frac{2n(p-2)}{(n+2)p}}
\nonumber\\
&\leq C
\|\sqrt{\rho}\partial_tu\|_{L^{2}} +C
\|\Lambda^{\frac{1}{2}+\frac{n}{4}}\partial_tu\|_{L^{2}},
\end{align}
\begin{align}\label{keyusef2}
\|\rho u\cdot\nabla u\|_{L^{p}}&\leq C\|\rho u\cdot\nabla u\|_{L^{2}}^{1-\frac{2n(p-2)}{(n+2)p}}
\|\rho u\cdot\nabla u\|_{L^{\frac{4n}{n-2}}}^{\frac{2n(p-2)}{(n+2)p}}
\nonumber\\
&\leq C
\| u\cdot\nabla u\|_{L^{2}}^{1-\frac{2n(p-2)}{(n+2)p}}
\| u\cdot\nabla u\|_{L^{\frac{4n}{n-2}}}^{\frac{2n(p-2)}{(n+2)p}}
\nonumber\\
&\leq C
\|\Lambda^{\frac{1}{2}+\frac{n}{4}}u\|_{L^{2}}^{2\left(1-\frac{2n(p-2)}{(n+2)p}\right)}
\left(\|\Lambda^{\frac{1}{2}+\frac{n}{4}}u\|_{L^{2}}
\|\Lambda^{1+\frac{n}{2}}u\|_{L^{2}} \right)^{\frac{2n(p-2)}{(n+2)p}}
\nonumber\\
&\leq C
\|\Lambda^{\frac{1}{2}+\frac{n}{4}}u\|_{L^{2}}^{2}
+C\|\Lambda^{1+\frac{n}{2}}u\|_{L^{2}}^{2},
\end{align}
where we have used the following fact
\begin{align}\label{dfcdsadr7}
\| u\cdot\nabla u\|_{L^{\frac{4n}{n-2}}}&\leq  C\| u\|_{L^{\infty}}\|\nabla u\|_{L^{\frac{4n}{n-2}}}\nonumber\\&\leq  C(\|\Lambda^{\frac{1}{2}+\frac{n}{4}}u\|_{L^{2}}^{\frac{4}{n+2}}
\|\Lambda^{1+\frac{n}{2}}u\|_{L^{2}}^{1-\frac{4}{n+2}})(\|\Lambda^{\frac{1}{2}+\frac{n}{4}}u\|_{L^{2}}^{1-\frac{4}{n+2}}
\|\Lambda^{1+\frac{n}{2}}u\|_{L^{2}}^{\frac{4}{n+2}})
\nonumber\\&\leq
\|\Lambda^{\frac{1}{2}+\frac{n}{4}}u\|_{L^{2}}
\|\Lambda^{1+\frac{n}{2}}u\|_{L^{2}}.
\end{align}
Combining  the estimates (\ref{addt030t001}), (\ref{addt030t003}), (\ref{521tt007}) and (\ref{xfrmu8})  allows us to show that, for any $2<p<\frac{4n}{n-2}$ and for any $t\geq0$,
\begin{align} \label{521tt010}&
\int_{0}^{t}{(\|\rho\partial_tu(\tau)\|_{L^{p}}+\|\rho u\cdot\nabla u(\tau)\|_{L^{p}}) \,d\tau}\nonumber\\&\leq  C\int_{0}^{t}{(\|\sqrt{\rho}\partial_\tau u\|_{L^{2}}^{1-\frac{2n(p-2)}{(n+2)p}}
\|\Lambda^{\frac{1}{2}+\frac{n}{4}}\partial_\tau u\|_{L^{2}}^{\frac{2n(p-2)}{(n+2)p}}+\|\Lambda^{\frac{1}{2}+\frac{n}{4}}u(\tau)\|_{L^{2}}^{2}
+ \|\Lambda^{1+\frac{n}{2}}u(\tau)\|_{L^{2}}^{2}) \,d\tau}
\nonumber\\&=
C\int_{0}^{1}{\tau^{-\frac{1}{2}}(\tau^{\frac{1}{2}}\|\sqrt{\rho}\partial_\tau u\|_{L^{2}})^{1-\frac{2n(p-2)}{(n+2)p}}
(\tau^{\frac{1}{2}}\|\Lambda^{\frac{1}{2}+\frac{n}{4}}\partial_\tau u\|_{L^{2}})^{\frac{2n(p-2)}{(n+2)p}}) \,d\tau}
\nonumber\\& \quad+C\int_{1}^{t}{e^{-\frac{\gamma\tau}{2}}(e^{\frac{\gamma\tau}{2}}
\|\sqrt{\rho}\partial_\tau u(\tau)\|_{L^{2}} +
e^{\frac{\gamma\tau}{2}}\|\Lambda^{\frac{1}{2}+\frac{n}{4}}\partial_\tau u(\tau)\|_{L^{2}}) \,d\tau}
\nonumber\\& \quad+C\int_{0}^{t}{e^{-\gamma\tau}(e^{\gamma\tau}\|\Lambda^{\frac{1}{2}+\frac{n}{4}}u(\tau)\|_{L^{2}}^{2}+
e^{\gamma\tau}\|\Lambda^{1+\frac{n}{2}}u(\tau)\|_{L^{2}}^{2}) \,d\tau}
\nonumber\\&\leq  C\int_{0}^{1}{\tau^{-\frac{1}{2}}
(\tau^{\frac{1}{2}}\|\Lambda^{\frac{1}{2}+\frac{n}{4}}\partial_\tau u\|_{L^{2}})^{\frac{2n(p-2)}{(n+2)p}}) \,d\tau}+\widetilde{C}\int_{0}^{t}{e^{-\gamma\tau} \,d\tau}
\nonumber\\&\leq \widetilde{C} + C\left(\int_{0}^{1}{\tau\|\Lambda^{\frac{1}{2}+\frac{n}{4}}\partial_\tau u\|_{L^{2}}^{2} \,d\tau}\right)^{\frac{n(p-2)}{(n+2)p}}
\left(\int_{0}^{1}{\tau^{-\frac{(n+2)p}{4(n+p)}}  \,d\tau}\right)^{1-\frac{n(p-2)}{(n+2)p}}
\nonumber\\&\leq  \widetilde{C}.
\end{align}
Applying the $L^{p}$-estimate to (\ref{521tt005}) yields
\begin{align}
\|\Lambda^{1+\frac{n}{2}}u\|_{L^{p}}\leq C\|\rho\partial_tu\|_{L^{p}}
+C\|\rho u\cdot\nabla u\|_{L^{p}},\nonumber
\end{align}
which leads to
\begin{align}\label{keyusef3}
\|\nabla u\|_{L^{\infty}}&\leq C\|\nabla u\|_{L^{\frac{4n}{n+2}}}^{1-\frac{(n+2)p}{(3n+2)p-4n}}
\|\Lambda^{1+\frac{n}{2}}u\|_{L^{p}}^{\frac{(n+2)p}{(3n+2)p-4n}}\nonumber\\
&\leq  C\|\Lambda^{\frac{1}{2}+\frac{n}{4}} u\|_{L^{2}}^{1-\frac{(n+2)p}{(3n+2)p-4n}}
\|\Lambda^{1+\frac{n}{2}}u\|_{L^{p}}^{\frac{(n+2)p}{(3n+2)p-4n}}
\nonumber\\
&\leq  C\|\Lambda^{\frac{1}{2}+\frac{n}{4}} u\|_{L^{2}}+C
\|\Lambda^{1+\frac{n}{2}}u\|_{L^{p}}
\nonumber\\
&\leq  C\|\Lambda^{\frac{1}{2}+\frac{n}{4}} u\|_{L^{2}}+C\|\rho\partial_tu\|_{L^{p}}
+C\|\rho u\cdot\nabla u\|_{L^{p}}.
\end{align}
Thanks to (\ref{addt030t001}), (\ref{521tt010}) and (\ref{keyusef3}), we immediately obtain
\begin{eqnarray} \label{521tt011}
\int_{0}^{t}{\|\nabla u(\tau)
\|_{L^{\infty}} \,d\tau} \leq \widetilde{C_{1}}.
\end{eqnarray}
Since $\rho$ satisfies
$\partial_t \rho+u\cdot\nabla\rho=0,$
direct computations yield
$$\frac{d}{dt}\|\nabla\rho(t)\|_{L^{\frac{4n}{n+6}}}\leq \|\nabla u\|_{L^{\infty}}\|\nabla\rho(t)\|_{L^{\frac{4n}{n+6}}},\quad
 \frac{d}{dt}\|\nabla\rho(t)\|_{L^{2}}\leq \|\nabla u\|_{L^{\infty}}\|\nabla\rho(t)\|_{L^{2}},$$
The Gronwall inequality and (\ref{521tt011}) ensure that
\begin{eqnarray}
\|\nabla\rho(t)\|_{L^{\frac{4n}{n+6}}}\leq \|\nabla\rho_{0}\|_{L^{\frac{4n}{n+6}}}\exp\left[\int_{0}^{t}{\|\nabla u(\tau)
\|_{L^{\infty}} \,d\tau}\right]\leq \widetilde{C_{1}}\|\nabla\rho_{0}\|_{L^{\frac{4n}{n+6}}},\nonumber
\end{eqnarray}
\begin{eqnarray}
\|\nabla\rho(t)\|_{L^{2}}\leq \|\nabla\rho_{0}\|_{L^{2}}\exp\left[\int_{0}^{t}{\|\nabla u(\tau)
\|_{L^{\infty}} \,d\tau}\right]\leq \widetilde{C_{1}}\|\nabla\rho_{0}\|_{L^{2}}.\nonumber
\end{eqnarray}
We thus complete the proof of Lemma \ref{INDL224}.
\end{proof}

\subsection{Time-weighted estimates and exponential
decay of $\|\Lambda^{\frac{1}{2}+\frac{n}{4}}\partial_{t}u(t)
\|_{L^{2}}^{2}$}

\begin{lemma}\label{INDL226}
Under the assumptions of Theorem \ref{Th1}, the   solution $(\rho,u)$
of the system (\ref{INDNSE}) admits the following bound for any $t\geq0$,
\begin{eqnarray} \label{t5529b01}
t^{2}\|\Lambda^{\frac{1}{2}+\frac{n}{4}}\partial_{t}u(t)
\|_{L^{2}}^{2} +\int_{0}^{t}{\tau^{2}\|\sqrt{\rho}\partial_{\tau\tau}u(\tau)\|_{L^{2}}^{2}\,d\tau} \leq \widetilde{C}.
\end{eqnarray}
Moreover, for any $t\geq1$, we have
\begin{eqnarray} \label{dsvv54t225}
e^{\gamma t}\|\Lambda^{\frac{1}{2}+\frac{n}{4}}\partial_{t}u(t)
\|_{L^{2}}^{2} +\int_{1}^{t}{e^{\gamma \tau}\|\sqrt{\rho}\partial_{\tau\tau}u(\tau)\|_{L^{2}}^{2}\,d\tau} \leq \widetilde{C},
\end{eqnarray}
where $\widetilde{C}$ depends only on $\|\rho_{0}\|_{L^{\frac{2n}{n+2}}}$, $\|\rho_{0}\|_{L^{\infty}}$, $\|\nabla\rho_{0}\|_{L^{2}}$, $\|\sqrt{\rho_{0}}u_{0}\|_{L^{2}}$ and $\|\Lambda^{\frac{1}{2}+\frac{n}{4}}u_{0}\|_{L^{2}}$.
\end{lemma}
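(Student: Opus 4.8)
The plan is to differentiate the velocity equation in time and test against $\partial_{tt}u$, exactly as was done with $\partial_t u$ in Lemma~\ref{INDL223}, but now track the weight $t^2$. Starting from \eqref{521tt008}, multiply by $\partial_{tt}u$ and integrate over $\mathbb{R}^n$. Using $\partial_t\rho = -\Dv(\rho u)$ and $\nabla\cdot u=0$ to rewrite the forcing terms, one obtains an identity of the schematic form
\begin{align*}
\frac12\frac{d}{dt}\|\Lambda^{\frac12+\frac n4}\partial_t u(t)\|_{L^2}^2 + \|\sqrt{\rho}\,\partial_{tt}u\|_{L^2}^2 = \sum_k M_k,
\end{align*}
where the $M_k$ collect terms of the type $\int \rho\, u\cdot\nabla\partial_t u\cdot\partial_{tt}u$, $\int \rho\,\partial_t u\cdot\nabla u\cdot\partial_{tt}u$, $\int \rho\, u\cdot\nabla(u\cdot\nabla u)\cdot\partial_{tt}u$ and their cousins produced by expanding $\partial_t(\rho u\cdot\nabla u)$. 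Each $M_k$ is estimated by Hölder, pulling out a factor $\|\sqrt{\rho}\,\partial_{tt}u\|_{L^2}$ to be absorbed by the left side via Young's inequality, and controlling the remaining factors through the Gagliardo--Nirenberg / Sobolev embeddings $\dot H^{\frac12+\frac n4}\hookrightarrow L^{\frac{4n}{n-2}}$, $\dot H^{\frac12+\frac n4}\hookrightarrow \dot W^{1,\frac{4n}{n+2}}$ and $\dot H^{1+\frac n2}\hookrightarrow\dot W^{2,n}\cap\dot W^{1,\infty}$, together with the identity \eqref{xfrdf217} bounding $\|\Lambda^{1+\frac n2}u\|_{L^2}$ by $\|\sqrt{\rho}\partial_t u\|_{L^2}+\|\Lambda^{\frac12+\frac n4}u\|_{L^2}^2$. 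The outcome should be a differential inequality
\begin{align*}
\frac{d}{dt}\|\Lambda^{\frac12+\frac n4}\partial_t u(t)\|_{L^2}^2 + \|\sqrt{\rho}\,\partial_{tt}u\|_{L^2}^2 \le C\bigl(\|\Lambda^{\frac12+\frac n4}u\|_{L^2}^2 + \|\sqrt{\rho}\partial_t u\|_{L^2}^2 + 1\bigr)\|\Lambda^{\frac12+\frac n4}\partial_t u\|_{L^2}^2 + C\,\Theta(t),
\end{align*}
where $\Theta(t)$ is a sum of quantities already known to be integrable and exponentially decaying (powers of $\|\Lambda^{\frac12+\frac n4}u\|_{L^2}$, $\|\sqrt{\rho}\partial_t u\|_{L^2}$, $\|\Lambda^{1+\frac n2}u\|_{L^2}$), controlled by Lemmas~\ref{INDL221}--\ref{INDL223}.

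For the time-weighted bound \eqref{t5529b01} I would multiply this inequality by $t^2$, rewrite $\frac{d}{dt}(t^2\|\Lambda^{\frac12+\frac n4}\partial_t u\|_{L^2}^2) = t^2\frac{d}{dt}\|\cdots\|_{L^2}^2 + 2t\|\cdots\|_{L^2}^2$, and note that the extra term $2t\|\Lambda^{\frac12+\frac n4}\partial_t u(t)\|_{L^2}^2$ is integrable on $[0,t]$ because $\int_0^t \tau\|\Lambda^{\frac12+\frac n4}\partial_\tau u\|_{L^2}^2\,d\tau \le \widetilde{C_1}$ by \eqref{addok5251}. The coefficient $\|\Lambda^{\frac12+\frac n4}u\|_{L^2}^2 + \|\sqrt{\rho}\partial_t u\|_{L^2}^2 + 1$ multiplying $t^2\|\Lambda^{\frac12+\frac n4}\partial_t u\|_{L^2}^2$ is only $L^1_tL^\infty$-bounded up to the "$+1$", so Gronwall gives growth $\le \widetilde{C}e^{Ct}$ rather than a uniform bound — this is the main obstacle. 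It is resolved by restricting first to a finite interval $[0,1]$: on $[0,1]$ the factor $e^{Ct}$ is harmless, all right-hand quantities $\Theta(\tau)$ are bounded, and the weight $t^2$ kills the singularity of the initial data of $\Lambda^{\frac12+\frac n4}\partial_t u$ (which we do not control, since the compatibility condition is dropped), yielding \eqref{t5529b01} for $t\in[0,1]$; for $t\ge1$ the weight $t^2$ is comparable to constants and \eqref{t5529b01} then follows from the exponential-decay version below. Alternatively one simply runs Gronwall with the genuinely integrable coefficient obtained after observing that the "$+1$" term pairs with the integrable factor $t$ coming from $2t\|\cdots\|_{L^2}^2$; either bookkeeping works.

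For the exponential decay \eqref{dsvv54t225}, I would instead multiply the differential inequality by $e^{\gamma t}$, producing $\frac{d}{dt}(e^{\gamma t}\|\Lambda^{\frac12+\frac n4}\partial_t u\|_{L^2}^2)$ plus a term $\gamma e^{\gamma t}\|\Lambda^{\frac12+\frac n4}\partial_t u\|_{L^2}^2$ that is absorbed into the right side, integrate from $1$ to $t$ (using \eqref{t5529b01} at $t=1$ as the starting datum, so no information on $t=0$ is needed), and apply the same Gronwall-with-small-$L^1$-coefficient trick used for \eqref{qfgtds3c11}: the multiplier $\|\Lambda^{\frac12+\frac n4}u(\tau)\|_{L^2}^2 + \|\sqrt{\rho}\partial_\tau u(\tau)\|_{L^2}^2$ has finite time-integral by \eqref{addt030t001} and \eqref{addt030t003}, while every $e^{\gamma\tau}\Theta(\tau)$ term is integrable by \eqref{addt030t003}, \eqref{521tt007}, \eqref{xfrmu8}. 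This yields $e^{\gamma t}\|\Lambda^{\frac12+\frac n4}\partial_t u(t)\|_{L^2}^2 + \int_1^t e^{\gamma\tau}\|\sqrt{\rho}\partial_{\tau\tau}u(\tau)\|_{L^2}^2\,d\tau \le \widetilde{C}$, and feeding this back establishes \eqref{t5529b01} on $[1,\infty)$ as well. The one place requiring care is the highest-order nonlinear term involving $\nabla^2\partial_t u$ or $\nabla^3 u$: it should be integrated by parts onto $\partial_{tt}u$ or handled by writing $\partial_t u$ via the Stokes system so that only $\|\Lambda^{1+\frac n2}\partial_t u\|_{L^2}$-type quantities appear, which are then interpolated between $\|\Lambda^{\frac12+\frac n4}\partial_t u\|_{L^2}$ and $\|\sqrt{\rho}\partial_{tt}u\|_{L^2}$ (again via the Stokes estimate \eqref{csroplk} applied to the $\partial_t u$-equation, which is where the bound $\|\nabla\rho\|_{L^2}\le\widetilde{C_1}\|\nabla\rho_0\|_{L^2}$ from \eqref{adds521tt009} enters, controlling $\|\nabla\rho\,\partial_t u\|_{L^2}$-type commutators).
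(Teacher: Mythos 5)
There is a genuine gap at the heart of your energy estimate. When you multiply \eqref{521tt008} by $\partial_{tt}u$, two of the resulting terms are
$-\int_{\mathbb{R}^{n}}\partial_{t}\rho\,\partial_{t}u\cdot\partial_{tt}u\,dx$ and
$-\int_{\mathbb{R}^{n}}\partial_{t}\rho\,u\cdot\nabla u\cdot\partial_{tt}u\,dx$,
and for these your recipe ``estimate by H\"older, pulling out a factor $\|\sqrt{\rho}\,\partial_{tt}u\|_{L^{2}}$ to be absorbed'' does not work: $\partial_{t}\rho=-u\cdot\nabla\rho$ carries no factor of $\rho$, so $\partial_{tt}u$ appears in these integrals \emph{without} the $\sqrt{\rho}$ weight, and since vacuum is allowed there is no way to dominate $\|\partial_{tt}u\|_{L^{2}}$ (or any unweighted norm of $\partial_{tt}u$) by $\|\sqrt{\rho}\,\partial_{tt}u\|_{L^{2}}$; nor does writing $\partial_{t}\rho=-{\rm div}(\rho u)$ and integrating by parts in space help, since that produces the uncontrolled quantity $\nabla\partial_{tt}u$. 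Consequently the clean differential inequality you assert as ``the outcome'' cannot be derived as stated, and everything downstream (the $t^{2}$-weighting and the $e^{\gamma t}$-weighting) rests on it. The paper's proof hinges on a device you never mention: these two terms are rewritten by moving the \emph{time} derivative, e.g.\ $\partial_{t}u\cdot\partial_{tt}u=\tfrac12\partial_{t}|\partial_{t}u|^{2}$ gives $H_{1}=-\tfrac12\tfrac{d}{dt}\int\partial_{t}\rho\,|\partial_{t}u|^{2}dx+\tfrac12\int\partial_{tt}\rho\,|\partial_{t}u|^{2}dx$ (and similarly for $H_{2}$, see \eqref{w34cvty76}--\eqref{w34cvty78}), after which $\partial_{tt}\rho=-\partial_{t}{\rm div}(\rho u)$ is integrated by parts in space so that only $\partial_{t}u$ and $\nabla\partial_{t}u$ appear. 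This forces a correction functional $\phi(t)$ into the energy, yielding \eqref{dsvv54t227} for $\|\Lambda^{\frac12+\frac n4}\partial_{t}u\|_{L^{2}}^{2}+\phi(t)$ rather than for $\|\Lambda^{\frac12+\frac n4}\partial_{t}u\|_{L^{2}}^{2}$ alone; one then needs the separate estimate \eqref{dfr56bn28} showing $|\phi(t)|\le\tfrac12\|\Lambda^{\frac12+\frac n4}\partial_{t}u\|_{L^{2}}^{2}+(\text{controlled terms})$, both pointwise (with weights $t^{2}$ and $e^{\gamma t}$) and in time-integrated form, before Gronwall can close. Your proposal contains none of this, and the difficulty you do flag (terms with $\nabla^{2}\partial_{t}u$ or $\nabla^{3}u$) does not actually arise in this computation.

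A secondary symptom of the same problem: the ``$+1$'' in your claimed coefficient, which you admit only gives $\widetilde{C}e^{Ct}$ growth from Gronwall, does not occur in the correct inequality. In the paper the right-hand side is $A(t)\|\Lambda^{\frac12+\frac n4}\partial_{t}u\|_{L^{2}}+B(t)\|\Lambda^{\frac12+\frac n4}\partial_{t}u\|_{L^{2}}^{2}$ with $A,B$ built from $\|\Lambda^{\frac12+\frac n4}u\|_{L^{2}}$, $\|\Lambda^{1+\frac n2}u\|_{L^{2}}$ and $\|\sqrt{\rho}\partial_{t}u\|_{L^{2}}$, and the exponential decay already proved in Lemmas \ref{INDL221}--\ref{INDL223} makes $\int_{0}^{t}B(\tau)\,d\tau$ and the weighted integrals of $A$ uniformly bounded, so Gronwall gives bounds independent of $t$ directly; no splitting into $[0,1]$ and $[1,\infty)$ with a feedback from the decay estimate is needed (and, as you set it up, obtaining \eqref{t5529b01} for $t\ge1$ ``from the exponential-decay version below'' while using \eqref{t5529b01} as the initial datum at $t=1$ is delicate bookkeeping you would have to make non-circular). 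Your overall skeleton --- test with $\partial_{tt}u$, weight by $t^{2}$ to avoid the compatibility condition, then weight by $e^{\gamma t}$ starting from $t=1$ --- matches the paper, but without the time-integration-by-parts/$\phi(t)$ mechanism for the $\partial_{t}\rho$ terms the proof does not go through in the vacuum setting.
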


\begin{proof}
Multiplying (\ref{521tt008}) by $\partial_{tt}u$ and integrating by parts imply that
\begin{align}& \label{dsvv54t226}
 \frac{1}{2}\frac{d}{dt}\|\Lambda^{\frac{1}{2}+\frac{n}{4}}\partial_{t}u(t)
 \|_{L^{2}}^{2} +\|\sqrt{\rho}\partial_{tt}u\|_{L^{2}}^{2}\nonumber\\&=
-\int_{\mathbb{R}^{n}}\partial_{t}\rho\partial_{t}u\cdot \partial_{tt}u\,dx-\int_{\mathbb{R}^{n}}\partial_{t}(\rho u)\cdot\nabla u\cdot \partial_{tt}u\,dx-\int_{\mathbb{R}^{n}}\rho u\cdot\nabla \partial_{t}u\cdot \partial_{tt}u\,dx\nonumber\\
 &= -\int_{\mathbb{R}^{n}}\partial_{t}\rho\partial_{t}u\cdot \partial_{tt}u\,dx-\int_{\mathbb{R}^{n}}\partial_{t}\rho u\cdot\nabla u\cdot \partial_{tt}u\,dx-\int_{\mathbb{R}^{n}} \rho \partial_{t}u\cdot\nabla u\cdot \partial_{tt}u\,dx\nonumber\\& \quad-\int_{\mathbb{R}^{n}}\rho u\cdot\nabla \partial_{t}u\cdot \partial_{tt}u\,dx
 \nonumber\\
 &:= H_{1}+H_{2}+H_{3}+H_{4}.
\end{align}
We first bound $H_{3}$ and $H_{4}$ as
\begin{align}
|H_{3}|+|H_{4}|&\leq  C\|\sqrt{\rho}\|_{L^{\infty}}\|\sqrt{\rho}\partial_{tt}u\|_{L^{2}}
(\|\partial_{t}u\|_{L^{\frac{4n}{n-2}}}\|\nabla u\|_{L^{\frac{4n}{n+2}}}+\| u\|_{L^{\frac{4n}{n-2}}}\|\partial_{t}\nabla u\|_{L^{\frac{4n}{n+2}}})\nonumber\\
&\leq  C\|\sqrt{\rho_{0}}\|_{L^{\infty}}\|\sqrt{\rho}\partial_{tt}u\|_{L^{2}}
\|\Lambda^{\frac{1}{2}+\frac{n}{4}}\partial_{t}u\|_{L^{2}}\|\Lambda^{\frac{1}{2}+\frac{n}{4}} u\|_{L^{2}}
\nonumber\\
&\leq \frac{1}{2}\|\sqrt{\rho}\partial_{tt}u\|_{L^{2}}^{2}+
C\|\Lambda^{\frac{1}{2}+\frac{n}{4}} u\|_{L^{2}}^{2}\|\Lambda^{\frac{1}{2}+\frac{n}{4}}\partial_{t}u\|_{L^{2}}^{2}.
\nonumber
\end{align}
We rewrite $H_{1}$ as follows
\begin{align}\label{w34cvty76}
H_{1} &= -\frac{1}{2}\int_{\mathbb{R}^{n}}\partial_{t}\rho \partial_{t}|\partial_{t}u|^{2}\,dx\nonumber\\
&=  -\frac{1}{2}\frac{d}{dt}\int_{\mathbb{R}^{n}}\partial_{t}\rho |\partial_{t}u|^{2}\,dx+\frac{1}{2} \int_{\mathbb{R}^{n}}\partial_{tt}\rho |\partial_{t}u|^{2}\,dx\nonumber\\
&=  -\frac{1}{2}\frac{d}{dt}\int_{\mathbb{R}^{n}}\partial_{t}\rho |\partial_{t}u|^{2}\,dx-\frac{1}{2} \int_{\mathbb{R}^{n}}\partial_{t}{\rm div}(\rho u) |\partial_{t}u|^{2}\,dx\nonumber\\
&=  -\frac{1}{2}\frac{d}{dt}\int_{\mathbb{R}^{n}}\partial_{t}\rho |\partial_{t}u|^{2}\,dx+\int_{\mathbb{R}^{n}}\partial_{t} (\rho u_{i} ) \partial_{t}u\cdot\partial_{t}\partial_{i}u\,dx
\nonumber\\
&= -\frac{1}{2}\frac{d}{dt}\int_{\mathbb{R}^{n}}\partial_{t}\rho |\partial_{t}u|^{2}\,dx+\int_{\mathbb{R}^{n}}\partial_{t}\rho u_{i} \partial_{t}u\cdot\partial_{t}\partial_{i}u\,dx\nonumber\\
& \quad+\int_{\mathbb{R}^{n}}\rho \partial_{t}u_{i} \partial_{t}u\cdot\partial_{t}\partial_{i}u\,dx.
\end{align}
By the H$\rm\ddot{o}$lder inequality and the embedding inequality, we have
\begin{align}
\int_{\mathbb{R}^{n}}\rho \partial_{t}u_{i} \partial_{t}u\cdot\partial_{t}\partial_{i}u\,dx&\leq
C\|\sqrt{\rho}\|_{L^{\infty}}\|\sqrt{\rho}\partial_{t}u\|_{L^{2}}\|\partial_{t}u\|_{L^{\frac{4n}{n-2}}}\|\partial_{t}\nabla u\|_{L^{\frac{4n}{n+2}}}\nonumber\\
&\leq
C\|\sqrt{\rho}\partial_{t}u\|_{L^{2}}\|\Lambda^{\frac{1}{2}+\frac{n}{4}}\partial_{t}u\|_{L^{2}}^{2}.\nonumber
\end{align}
Similarly, using $\partial_{t}\rho=-u\cdot\nabla\rho$ gives
\begin{align}
\int_{\mathbb{R}^{n}}\partial_{t}\rho u_{i} \partial_{t}u\cdot\partial_{t}\partial_{i}u\,dx&\leq
C\|u\cdot\nabla\rho\|_{L^{2}}\|u\|_{L^{\infty}}
\|\partial_{t}u\|_{L^{\frac{4n}{n-2}}} \|\partial_{t}\nabla u\|_{L^{\frac{4n}{n+2}}}\nonumber\\
&\leq
C\|\nabla\rho\|_{L^{2}}\|u\|_{L^{\infty}}^{2}
\|\Lambda^{\frac{1}{2}+\frac{n}{4}}\partial_{t}u\|_{L^{2}}^{2}\nonumber\\
&\leq
C\|\nabla\rho_{0}\|_{L^{2}}\|\Lambda^{\frac{1}{2}+\frac{n}{4}}u\|_{L^{2}}
^{\frac{8}{n+2}}\|\Lambda^{1+\frac{n}{2}}u\|_{L^{2}}
^{\frac{2(n-2)}{n+2}}
\|\Lambda^{\frac{1}{2}+\frac{n}{4}}\partial_{t}u\|_{L^{2}}^{2}\nonumber\\
&\leq
C\|\Lambda^{\frac{1}{2}+\frac{n}{4}}u\|_{L^{2}}
^{\frac{8}{n+2}}\|\Lambda^{1+\frac{n}{2}}u\|_{L^{2}}
^{\frac{2(n-2)}{n+2}}
\|\Lambda^{\frac{1}{2}+\frac{n}{4}}\partial_{t}u\|_{L^{2}}^{2}.\nonumber
\end{align}
We obtain that
\begin{align}
H_{1} &\leq -\frac{1}{2}\frac{d}{dt}\int_{\mathbb{R}^{n}}\partial_{t}\rho |\partial_{t}u|^{2}\,dx+ C(\|\sqrt{\rho}\partial_{t}u\|_{L^{2}}+
\|\Lambda^{\frac{1}{2}+\frac{n}{4}}u\|_{L^{2}}
^{\frac{8}{n+2}}\|\Lambda^{1+\frac{n}{2}}u\|_{L^{2}}
^{\frac{2(n-2)}{n+2}})\nonumber\\& \quad\times
\|\Lambda^{\frac{1}{2}+\frac{n}{4}}\partial_{t}u\|_{L^{2}}^{2}.\nonumber
\end{align}
Thanks to $\partial_{t}\rho=-{\rm div}(\rho u)$, one obtains
\begin{align}\label{w34cvty78}
H_{2}&= -\frac{d}{dt}\int_{\mathbb{R}^{n}}\partial_{t}\rho u\cdot\nabla u\cdot \partial_{t}u\,dx+\int_{\mathbb{R}^{n}}\partial_{tt}\rho u\cdot\nabla u\cdot \partial_{t}u\,dx+\int_{\mathbb{R}^{n}}\partial_{t}\rho \partial_{t}(u\cdot\nabla u)\cdot \partial_{t}u\,dx\nonumber\\
&= -\frac{d}{dt}\int_{\mathbb{R}^{n}}\partial_{t}\rho u\cdot\nabla u\cdot \partial_{t}u\,dx+\int_{\mathbb{R}^{n}}\partial_{t}(\rho u_{i})\partial_{i}(u\cdot\nabla u)\cdot \partial_{t}u\,dx
\nonumber\\& \quad+\int_{\mathbb{R}^{n}}\partial_{t}(\rho u_{i})u\cdot\nabla u\cdot \partial_{t}\partial_{i}u\,dx
+\int_{\mathbb{R}^{n}}\partial_{t}\rho \partial_{t}(u\cdot\nabla u)\cdot \partial_{t}u\,dx
\nonumber\\
&= -\frac{d}{dt}\int_{\mathbb{R}^{n}}\partial_{t}\rho u\cdot\nabla u\cdot \partial_{t}u\,dx+\int_{\mathbb{R}^{n}}\rho \partial_{t}u_{i}[\partial_{i}(u\cdot\nabla u)\cdot \partial_{t}u+u\cdot\nabla u\cdot \partial_{t}\partial_{i}u]\,dx
\nonumber\\& \quad+\int_{\mathbb{R}^{n}}\partial_{t}\rho[ u_{i}u\cdot\nabla u\cdot \partial_{t}\partial_{i}u+u_{i}\partial_{i}(u\cdot\nabla u)\cdot \partial_{t}u]\,dx
+\int_{\mathbb{R}^{n}}\partial_{t}\rho \partial_{t}(u\cdot\nabla u)\cdot \partial_{t}u\,dx\nonumber\\
&= -\frac{d}{dt}\int_{\mathbb{R}^{n}}\partial_{t}\rho u\cdot\nabla u\cdot \partial_{t}u\,dx+H_{21}+H_{22}+H_{23}.
\end{align}
It follows from the H$\rm\ddot{o}$lder inequality and the interpolation inequalities that
\begin{align}
H_{21} &\leq C\|\rho\|_{L^{\infty}}\|\partial_{t}u\|_{L^{\frac{4n}{n-2}}}\|\nabla u\nabla u\|_{L^{\frac{2n}{n+2}}} \|\partial_{t}u\|_{L^{\frac{4n}{n-2}}}\nonumber\\& \quad+
C\|\sqrt{\rho}\|_{L^{\infty}}\|\partial_{t}u\|_{L^{\frac{4n}{n-2}}}\|\sqrt{\rho} u\|_{L^{2}}\|\nabla^{2}u\|_{L^{n}}\|\partial_{t}u\|_{L^{\frac{4n}{n-2}}}
\nonumber\\& \quad+
C\|\rho\|_{L^{\infty}}\|\partial_{t}u\|_{L^{\frac{4n}{n-2}}}\|u\|_{L^{\frac{4n}{n-2}}} \|\nabla u\|_{L^{\frac{4n}{n+2}}}\|\nabla\partial_{t}u\|_{L^{\frac{4n}{n+2}}}
\nonumber\\&\leq
C(\|\sqrt{\rho} u\|_{L^{2}}\|\Lambda^{1+\frac{n}{2}}u\|_{L^{2}}+
\|\Lambda^{\frac{1}{2}+\frac{n}{4}}u\|_{L^{2}}
^{2})
\|\Lambda^{\frac{1}{2}+\frac{n}{4}}\partial_{t}u\|_{L^{2}}^{2},\nonumber
\end{align}
\begin{align}
H_{22} &\leq C\|\partial_{t}\rho\|_{L^{2}}\|u\|_{L^{\infty}}^{2}\|\nabla u\|_{L^{\frac{4n}{n-2}}} \|\nabla\partial_{t}u\|_{L^{\frac{4n}{n+2}}}\nonumber\\& \quad+
C\|\partial_{t}\rho\|_{L^{2}}
\|u\|_{L^{\infty}}(\|\nabla u \nabla u\|_{L^{\frac{4n}{n+2}}}+\| u \nabla^{2} u\|_{L^{\frac{4n}{n+2}}})\|\partial_{t}u\|_{L^{\frac{4n}{n-2}}}
\nonumber\\&\leq C\|u\cdot\nabla\rho\|_{L^{2}}\|u\|_{L^{\infty}}^{2}
\|\Lambda^{\frac{3}{2}+\frac{n}{4}} u\|_{L^{2}} \|\Lambda^{\frac{1}{2}+\frac{n}{4}}\partial_{t}u\|_{L^{2}}
\nonumber\\& \quad+C\|u\cdot\nabla\rho\|_{L^{2}}\|u\|_{L^{\infty}}
\|\nabla u\|_{L^{\frac{8n}{n+2}}}^{2} \|\Lambda^{\frac{1}{2}+\frac{n}{4}}\partial_{t}u\|_{L^{2}}
\nonumber\\&\leq C\|\nabla\rho\|_{L^{2}}\|u\|_{L^{\infty}}^{3}
\|\Lambda^{\frac{3}{2}+\frac{n}{4}} u\|_{L^{2}} \|\Lambda^{\frac{1}{2}+\frac{n}{4}}\partial_{t}u\|_{L^{2}}\nonumber\\& \quad
+C\|\nabla\rho\|_{L^{2}}
\|u\|_{L^{\infty}}^{2}
\|\nabla u\|_{L^{\frac{8n}{n+2}}}^{2} \|\Lambda^{\frac{1}{2}+\frac{n}{4}}\partial_{t}u\|_{L^{2}}
\nonumber\\&\leq C\|\nabla\rho_{0}\|_{L^{2}}
(\|\Lambda^{\frac{1}{2}+\frac{n}{4}}u\|_{L^{2}}^{\frac{12}{n+2}}
\|\Lambda^{1+\frac{n}{2}}u\|_{L^{2}}^{\frac{3(n-2)}{n+2}})
(\|\Lambda^{\frac{1}{2}+\frac{n}{4}}u\|_{L^{2}}^{\frac{n-2}{n+2}}
\|\Lambda^{1+\frac{n}{2}}u\|_{L^{2}}^{\frac{4}{n+2}}) \|\Lambda^{\frac{1}{2}+\frac{n}{4}}\partial_{t}u\|_{L^{2}}
\nonumber\\& \quad+C\|\nabla\rho_{0}\|_{L^{2}}
(\|\Lambda^{\frac{1}{2}+\frac{n}{4}}u\|_{L^{2}}^{\frac{8}{n+2}}
\|\Lambda^{1+\frac{n}{2}}u\|_{L^{2}}^{\frac{2(n-2)}{n+2}})
(\|\Lambda^{\frac{1}{2}+\frac{n}{4}}u\|_{L^{2}}
\|\Lambda^{1+\frac{n}{2}}u\|_{L^{2}}) \|\Lambda^{\frac{1}{2}+\frac{n}{4}}\partial_{t}u\|_{L^{2}}
\nonumber\\&\leq
C\|\Lambda^{\frac{1}{2}+\frac{n}{4}}u\|_{L^{2}}^{\frac{n+10}{n+2}}
\|\Lambda^{1+\frac{n}{2}}u\|_{L^{2}}^{\frac{3n-2}{n+2}}
\|\Lambda^{\frac{1}{2}+\frac{n}{4}}\partial_{t}u\|_{L^{2}}.\nonumber
\end{align}
Similarly, we have
\begin{align}
H_{23} &\leq C\|\partial_{t}\rho\|_{L^{2}}\|\partial_{t}u\|_{L^{\frac{4n}{n-2}}}\|\nabla u\|_{L^{n}} \|\partial_{t}u\|_{L^{\frac{4n}{n-2}}}\nonumber\\& \quad+
C\|\partial_{t}\rho\|_{L^{2}}
\|u\|_{L^{\infty}}\|\nabla\partial_{t}u\|_{L^{\frac{4n}{n+2}}}\|\partial_{t}u\|_{L^{\frac{4n}{n-2}}}
\nonumber\\&\leq C\|u\cdot\nabla\rho\|_{L^{2}}(\|\nabla u\|_{L^{n}}+\| u\|_{L^{\infty}}) \|\Lambda^{\frac{1}{2}+\frac{n}{4}}\partial_{t}u\|_{L^{2}}^{2}
\nonumber\\&\leq C\|\nabla\rho\|_{L^{2}}(\|\nabla u\|_{L^{n}}^{2}+\| u\|_{L^{\infty}}^{2}) \|\Lambda^{\frac{1}{2}+\frac{n}{4}}\partial_{t}u\|_{L^{2}}^{2}
\nonumber\\&\leq C\|\nabla\rho_{0}\|_{L^{2}}
(\|\Lambda^{\frac{1}{2}+\frac{n}{4}}u\|_{L^{2}}^{\frac{8}{n+2}}
\|\Lambda^{1+\frac{n}{2}}u\|_{L^{2}}^{\frac{2(n-2)}{n+2}}) \|\Lambda^{\frac{1}{2}+\frac{n}{4}}\partial_{t}u\|_{L^{2}}^{2}
\nonumber\\&\leq
C\|\Lambda^{\frac{1}{2}+\frac{n}{4}}u\|_{L^{2}}^{\frac{8}{n+2}}
\|\Lambda^{1+\frac{n}{2}}u\|_{L^{2}}^{\frac{2(n-2)}{n+2}} \|\Lambda^{\frac{1}{2}+\frac{n}{4}}\partial_{t}u\|_{L^{2}}^{2}.\nonumber
\end{align}
Therefore, $H_{2}$ admits the following bound
\begin{align}
H_{2}&\leq -\frac{d}{dt}\int_{\mathbb{R}^{n}}\partial_{t}\rho u\cdot\nabla u\cdot \partial_{t}u\,dx+C(\|\sqrt{\rho} u\|_{L^{2}}\|\Lambda^{1+\frac{n}{2}}u\|_{L^{2}}+
\|\Lambda^{\frac{1}{2}+\frac{n}{4}}u\|_{L^{2}}
^{2}\nonumber\\& \quad+\|\Lambda^{\frac{1}{2}+\frac{n}{4}}u\|_{L^{2}}^{\frac{8}{n+2}}
\|\Lambda^{1+\frac{n}{2}}u\|_{L^{2}}^{\frac{2(n-2)}{n+2}})
\|\Lambda^{\frac{1}{2}+\frac{n}{4}}\partial_{t}u\|_{L^{2}}^{2}\nonumber\\& \quad
+C\|\Lambda^{\frac{1}{2}+\frac{n}{4}}u\|_{L^{2}}^{\frac{n+10}{n+2}}
\|\Lambda^{1+\frac{n}{2}}u\|_{L^{2}}^{\frac{3n-2}{n+2}}
\|\Lambda^{\frac{1}{2}+\frac{n}{4}}\partial_{t}u\|_{L^{2}}.\nonumber
\end{align}
We finally get by collecting all the above estimates
\begin{align}\label{dsvv54t227}&
\frac{d}{dt}\left(\|\Lambda^{\frac{1}{2}+\frac{n}{4}}\partial_{t}u(t)
 \|_{L^{2}}^{2}+\phi(t)\right) +\|\sqrt{\rho}\partial_{tt}u\|_{L^{2}}^{2}\nonumber\\&\leq  A(t)\|\Lambda^{\frac{1}{2}+\frac{n}{4}}\partial_{t}u\|_{L^{2}}+
B(t)\|\Lambda^{\frac{1}{2}+\frac{n}{4}}\partial_{t}u\|_{L^{2}}^{2},
\end{align}
where
$$\phi(t):=-\frac{1}{2} \int_{\mathbb{R}^{n}}\partial_{t}\rho |\partial_{t}u|^{2}\,dx- \int_{\mathbb{R}^{n}}\partial_{t}\rho u\cdot\nabla u\cdot \partial_{t}u\,dx,$$
$$A(t):=C\|\Lambda^{\frac{1}{2}+\frac{n}{4}}u(t)\|_{L^{2}}^{\frac{n+10}{n+2}}
\|\Lambda^{1+\frac{n}{2}}u(t)\|_{L^{2}}^{\frac{3n-2}{n+2}},$$
\begin{align*}
B(t):=\ &C(\|\sqrt{\rho} u(t)\|_{L^{2}}\|\Lambda^{1+\frac{n}{2}}u(t)\|_{L^{2}}+
\|\Lambda^{\frac{1}{2}+\frac{n}{4}}u(t)\|_{L^{2}}
^{2}\\
&+\|\Lambda^{\frac{1}{2}+\frac{n}{4}}u(t)\|_{L^{2}}^{\frac{8}{n+2}}
\|\Lambda^{1+\frac{n}{2}}u(t)\|_{L^{2}}^{\frac{2(n-2)}{n+2}}
+\|\sqrt{\rho}\partial_{t}u\|_{L^{2}}).
\end{align*}
Hence, in view of $\partial_{t}\rho=-{\rm div}(\rho u)=-u\cdot\nabla\rho$ and the H$\rm\ddot{o}$lder inequality along with the embedding inequality, we deduce
\begin{align}\label{dfr56bn28}
\left|\phi(t)\right|&= \left|\frac{1}{2} \int_{\mathbb{R}^{n}}{\rm div}(\rho u) |\partial_{t}u|^{2}\,dx- \int_{\mathbb{R}^{n}}\partial_{t}\rho u\cdot\nabla u\cdot \partial_{t}u\,dx \right|\nonumber\\
&= \left|- \int_{\mathbb{R}^{n}} \rho u_{i}\partial_{t}u\cdot\partial_{t}\partial_{i}u\,dx- \int_{\mathbb{R}^{n}}\partial_{t}\rho u\cdot\nabla u\cdot \partial_{t}u\,dx\right|
\nonumber\\
&\leq C\|\sqrt{\rho}\|_{L^{\infty}}\|\sqrt{\rho}\partial_{t}u\|_{L^{2}}\| u\|_{L^{\frac{4n}{n-2}}} \|\nabla\partial_{t}u\|_{L^{\frac{4n}{n+2}}}\nonumber\\&\quad
+C\|\partial_{t}\rho\|_{L^{2}}\|u\|_{L^{\infty}}\|\nabla u\|_{L^{\frac{4n}{n+2}}} \|\partial_{t}u\|_{L^{\frac{4n}{n-2}}}
\nonumber\\
&\leq C\|\sqrt{\rho_{0}}\|_{L^{\infty}}\|\sqrt{\rho}\partial_{t}u\|_{L^{2}}\| \Lambda^{\frac{1}{2}+\frac{n}{4}}u\|_{L^{2}} \|\Lambda^{\frac{1}{2}+\frac{n}{4}}\partial_{t}u\|_{L^{2}}\nonumber\\&\quad
+C\|\nabla\rho\|_{L^{2}}\|u\|_{L^{\infty}}^{2}\|\Lambda^{\frac{1}{2}+\frac{n}{4}}
u\|_{L^{2}} \|\Lambda^{\frac{1}{2}+\frac{n}{4}}\partial_{t}u\|_{L^{2}}\nonumber\\
&\leq C\|\sqrt{\rho_{0}}\|_{L^{\infty}}\|\sqrt{\rho}\partial_{t}u\|_{L^{2}}\| \Lambda^{\frac{1}{2}+\frac{n}{4}}u\|_{L^{2}} \|\Lambda^{\frac{1}{2}+\frac{n}{4}}\partial_{t}u\|_{L^{2}}\nonumber\\&\quad
+C\|\nabla\rho_{0}\|_{L^{2}}(\|\Lambda^{\frac{1}{2}+\frac{n}{4}}
u\|_{L^{2}}^{\frac{8}{n+2}}
\|\Lambda^{1+\frac{n}{2}}u\|_{L^{2}}^{\frac{2(n-2)}{n+2}})
\|\Lambda^{\frac{1}{2}+\frac{n}{4}}
u\|_{L^{2}} \|\Lambda^{\frac{1}{2}+\frac{n}{4}}\partial_{t}u\|_{L^{2}}
\nonumber\\
&\leq \frac{1}{2}\|\Lambda^{\frac{1}{2}+\frac{n}{4}}\partial_{t}u\|_{L^{2}}^{2}+
C\|\sqrt{\rho}\partial_{t}u\|_{L^{2}}^{2}\| \Lambda^{\frac{1}{2}+\frac{n}{4}}u\|_{L^{2}}^{2}\nonumber\\&\quad+
C\|\Lambda^{\frac{1}{2}+\frac{n}{4}}u\|_{L^{2}}^{\frac{2n+20}{n+2}}
\|\Lambda^{1+\frac{n}{2}}u\|_{L^{2}}^{\frac{4(n-2)}{n+2}}.
\end{align}
We first get from (\ref{dsvv54t227}) that
\begin{align}\label{xcrtinm01}&
\frac{d}{dt}\left(t^{2}\|\Lambda^{\frac{1}{2}+\frac{n}{4}}\partial_{t}u(t)
 \|_{L^{2}}^{2}+t^{2}\phi(t)\right) +t^{2}\|\sqrt{\rho}\partial_{tt}u\|_{L^{2}}^{2}\nonumber\\&\leq  2t\|\Lambda^{\frac{1}{2}+\frac{n}{4}}\partial_{t}u(t)
 \|_{L^{2}}^{2}+2t\phi(t)+t^{2}A(t)\|\Lambda^{\frac{1}{2}+\frac{n}{4}}\partial_{t}u\|_{L^{2}} +
B(t)t^{2}\|\Lambda^{\frac{1}{2}+\frac{n}{4}}\partial_{t}u\|_{L^{2}}^{2}.
\end{align}
By (\ref{addt030t001}), (\ref{addt030t003}) and (\ref{addok5251}), we conclude
\begin{align}\label{xcrtinm02}&
\int_{0}^{t}\tau^{2}A(\tau)\|\Lambda^{\frac{1}{2}+\frac{n}{4}}
\partial_{\tau}u(\tau)\|_{L^{2}}\,d\tau \nonumber\\&\leq  C \left(\int_{0}^{t}\tau^{3}A^{2}(\tau) \,d\tau\right)^{\frac{1}{2}} \left(\int_{0}^{t}\tau\|\Lambda^{\frac{1}{2}+\frac{n}{4}}
\partial_{\tau}u(\tau)\|_{L^{2}}^{2}\,d\tau\right)^{\frac{1}{2}}\nonumber\\
&\leq  C \left(\int_{0}^{t}\tau^{3}\|\Lambda^{\frac{1}{2}+\frac{n}{4}}u(\tau)\|_{L^{2}}
^{\frac{2(n+10)}{n+2}}
\|\Lambda^{1+\frac{n}{2}}u(\tau)\|_{L^{2}}^{\frac{2(3n-2)}{n+2}} \,d\tau\right)^{\frac{1}{2}}
\nonumber\\
&=  C \left(\int_{0}^{t}\tau^{\frac{8}{n+2}}e^{-\frac{(n+10)\gamma\tau}{n+2}}(e^{\gamma\tau}
\|\Lambda^{\frac{1}{2}+\frac{n}{4}}u(\tau)\|_{L^{2}}^{2})
^{\frac{n+10}{n+2}}
(\tau\|\Lambda^{1+\frac{n}{2}}u(\tau)\|_{L^{2}}^{2})^{\frac{3n-2}{n+2}} \,d\tau\right)^{\frac{1}{2}}
\nonumber\\
&\leq  C \left(\int_{0}^{t}\tau^{\frac{8}{n+2}}e^{-\frac{(n+10)\gamma\tau}{n+2}} \,d\tau\right)^{\frac{1}{2}}\leq \widetilde{C},
\end{align}
where and in what follows, we use the following facts: for any $\sigma_{1}\geq0,\,\sigma_{2}>0$,
$$\int_{0}^{\infty}\eta^{\sigma_{1}}e^{-\sigma_{2}\eta}\,d\eta<\infty\quad \mbox{and}\quad \tau^{\sigma_{1}}e^{-\sigma_{2}\tau}<\infty,\ \forall\,\tau\geq0.$$
Noticing the following estimate
\begin{align}
\int_{0}^{t}\|\sqrt{\rho}\partial_{t}u(\tau)\|_{L^{2}}\,d\tau&=
\int_{0}^{t}e^{-\gamma \frac{\tau}{2}}e^{\gamma \frac{\tau}{2}}\|\sqrt{\rho}\partial_{t}u(\tau)\|_{L^{2}}\,d\tau
\nonumber\\&\leq
\left(\int_{0}^{t}e^{-\gamma \tau} \right)^{\frac{1}{2}}\left(\int_{0}^{t}e^{\gamma\tau}
\|\sqrt{\rho}\partial_{t}u(\tau)\|_{L^{2}}^{2}
\,d\tau\right)^{\frac{1}{2}} \leq
\widetilde{C}
\end{align}
and using the argument in dealing with (\ref{xcrtinm02}), we show that
\begin{eqnarray}\label{xcrtinm03}
\int_{0}^{t}
B(\tau)\,d\tau\leq \widetilde{C}.
\end{eqnarray}
According to (\ref{addt030t001}), (\ref{addt030t003}) and (\ref{addok5251}) again, one deduces from (\ref{dfr56bn28}) that
\begin{align}\label{xcrtinm04}\int_{0}^{t}
\tau\phi(\tau)\,d\tau&\leq \frac{1}{2}\int_{0}^{t}
\tau\|\Lambda^{\frac{1}{2}+\frac{n}{4}}\partial_{\tau}u(\tau)\|_{L^{2}}^{2}\,d\tau
+C\int_{0}^{t}
\tau\|\sqrt{\rho}\partial_{\tau}u(\tau)\|_{L^{2}}^{2}\| \Lambda^{\frac{1}{2}+\frac{n}{4}}u(\tau)\|_{L^{2}}^{2}\,d\tau\nonumber\\& \quad
+C\int_{0}^{t}
\tau\|\Lambda^{\frac{1}{2}+\frac{n}{4}}u(\tau)\|_{L^{2}}^{\frac{2n+20}{n+2}}
\|\Lambda^{1+\frac{n}{2}}u(\tau)\|_{L^{2}}^{\frac{4(n-2)}{n+2}}\,d\tau\nonumber\\
&\leq \widetilde{C}+C\int_{0}^{t}
\tau e^{-\frac{(n+10)\gamma\tau}{n+2}}(e^{\gamma\tau}\|\Lambda^{\frac{1}{2}+\frac{n}{4}}
u(\tau)\|_{L^{2}}^{2})
^{\frac{n+10}{n+2}}
\|\Lambda^{1+\frac{n}{2}}u(\tau)\|_{L^{2}}^{\frac{4(n-2)}{n+2}}\,d\tau
\nonumber\\
&\leq  \widetilde{C}+C\int_{0}^{t}
\tau e^{-\frac{(n+10)\gamma\tau}{n+2}}
\|\Lambda^{1+\frac{n}{2}}u(\tau)\|_{L^{2}}^{\frac{4(n-2)}{n+2}}\,d\tau
\nonumber\\
&\leq \widetilde{C}+C\chi_{\{n\geq 6\}}\int_{0}^{t}
\tau^{\frac{8}{n+2}} e^{-\frac{(n+10)\gamma\tau}{n+2}}
(\tau\|\Lambda^{1+\frac{n}{2}}u(\tau)\|_{L^{2}}^{2})^{\frac{n-6}{n+2}}
\|\Lambda^{1+\frac{n}{2}}u(\tau)\|_{L^{2}}^{2}\,d\tau\nonumber\\&\quad
+C\chi_{\{3\leq n<6\}}
\left(\int_{0}^{t}
\tau^{\frac{n+2}{6-n}} e^{-\frac{(n+10)\gamma\tau}{6-n}}\,d\tau\right)^{\frac{6-n}{n+2}}
\left(\int_{0}^{t}
\|\Lambda^{1+\frac{n}{2}}u(\tau)\|_{L^{2}}^{2}\,d\tau\right)^{\frac{2(n-2)}{n+2}}
\nonumber\\
&\leq \widetilde{C}+C\chi_{\{n\geq 6\}}\int_{0}^{t}
\|\Lambda^{1+\frac{n}{2}}u(\tau)\|_{L^{2}}^{2}\,d\tau\nonumber\\&\quad
+C\chi_{\{3\leq n<6\}}
\left(\int_{0}^{t}
\|\Lambda^{1+\frac{n}{2}}u(\tau)\|_{L^{2}}^{2}\,d\tau\right)^{\frac{2(n-2)}{n+2}}
\nonumber\\&\leq \widetilde{C}.
\end{align}
We get by integrating (\ref{xcrtinm01}) in time and using (\ref{xcrtinm02}) as well as (\ref{xcrtinm04})
\begin{align}\label{xcrtinm05}&
t^{2}\|\Lambda^{\frac{1}{2}+\frac{n}{4}}\partial_{t}u(t)
 \|_{L^{2}}^{2}+t^{2}\phi(t)+\int_{0}^{t}\tau^{2}\|\sqrt{\rho}
 \partial_{\tau\tau}u(\tau)\|_{L^{2}}^{2}\,d\tau\nonumber\\&\leq  \widetilde{C} +\int_{0}^{t}
B(\tau)\tau^{2}\|\Lambda^{\frac{1}{2}+\frac{n}{4}}\partial_{\tau}u(\tau)\|_{L^{2}}^{2}\,d\tau.
\end{align}
Direct computations also yield
\begin{align}\label{xcrtinm06}
t^{2}\left|\phi(t)\right| &\leq \frac{1}{2}t^{2}\|\Lambda^{\frac{1}{2}+\frac{n}{4}}\partial_{t}u\|_{L^{2}}^{2}+
Ct^{2}\|\sqrt{\rho}\partial_{t}u\|_{L^{2}}^{2}\| \Lambda^{\frac{1}{2}+\frac{n}{4}}u\|_{L^{2}}^{2}\nonumber\\& \quad+
Ct^{2}\|\Lambda^{\frac{1}{2}+\frac{n}{4}}u\|_{L^{2}}^{\frac{2n+20}{n+2}}
\|\Lambda^{1+\frac{n}{2}}u\|_{L^{2}}^{\frac{4(n-2)}{n+2}}\nonumber\\
&= \frac{1}{2}t^{2}\|\Lambda^{\frac{1}{2}+\frac{n}{4}}\partial_{t}u\|_{L^{2}}^{2}+
Cte^{-\gamma t} (t\|\sqrt{\rho}\partial_{t}u\|_{L^{2}}^{2})(e^{\gamma t}\| \Lambda^{\frac{1}{2}+\frac{n}{4}}u\|_{L^{2}}^{2})\nonumber\\& \quad+
Ct^{\frac{8}{n+2}}e^{-\frac{(n+10)\gamma t}{n+2}}(e^{\gamma t}\|\Lambda^{\frac{1}{2}+\frac{n}{4}}u\|_{L^{2}}^{2})^{\frac{n+10}{n+2}}
(t\|\Lambda^{1+\frac{n}{2}}u\|_{L^{2}}^{2})^{\frac{2(n-2)}{n+2}}\nonumber\\
&\leq \frac{1}{2}t^{2}\|\Lambda^{\frac{1}{2}+\frac{n}{4}}\partial_{t}u\|_{L^{2}}^{2}+
\widetilde{C}.
\end{align}
Inserting (\ref{xcrtinm06}) into (\ref{xcrtinm05}) implies
\begin{eqnarray}
t^{2}\|\Lambda^{\frac{1}{2}+\frac{n}{4}}\partial_{t}u(t)
 \|_{L^{2}}^{2}+\int_{0}^{t}\tau^{2}\|\sqrt{\rho}\partial_{\tau\tau}u(\tau)\|_{L^{2}}
 ^{2}\,d\tau \leq  \widetilde{C} +\int_{0}^{t}
B(\tau)\tau^{2}\|\Lambda^{\frac{1}{2}+\frac{n}{4}}\partial_{\tau}u(\tau)\|_{L^{2}}^{2}
\,d\tau.\nonumber
\end{eqnarray}
This along with the Gronwall inequality and (\ref{xcrtinm03}) yields
\begin{eqnarray*} 
t^{2}\|\Lambda^{\frac{1}{2}+\frac{n}{4}}\partial_{t}u(t)
\|_{L^{2}}^{2} +\int_{0}^{t}{\tau^{2}\|\sqrt{\rho}\partial_{\tau\tau}u(\tau)\|_{L^{2}}^{2}\,d\tau} \leq \widetilde{C},
\end{eqnarray*}
which is (\ref{t5529b01}). With the help of (\ref{t5529b01}), we are in the position to derive the exponential decay of $\|\Lambda^{\frac{1}{2}+\frac{n}{4}}\partial_{t}u(t)
\|_{L^{2}}$. To this end, we multiply (\ref{dsvv54t227}) by $e^{\gamma t}$ to obtain
\begin{align}\label{dsvv54t228}&
\frac{d}{dt}\left(e^{\gamma t}\|\Lambda^{\frac{1}{2}+\frac{n}{4}}\partial_{t}u(t)
 \|_{L^{2}}^{2}+e^{\gamma t}\phi(t)\right) +e^{\gamma t}\|\sqrt{\rho}\partial_{tt}u\|_{L^{2}}^{2}\nonumber\\&\leq  \gamma e^{\gamma t}\|\Lambda^{\frac{1}{2}+\frac{n}{4}}\partial_{t}u(t)
 \|_{L^{2}}^{2}+\gamma e^{\gamma t}\phi(t)+A(t)e^{\gamma t}\|\Lambda^{\frac{1}{2}+\frac{n}{4}}\partial_{t}u\|_{L^{2}} +
B(t)e^{\gamma t}\|\Lambda^{\frac{1}{2}+\frac{n}{4}}\partial_{t}u\|_{L^{2}}^{2}.
\end{align}
Now integrating (\ref{dsvv54t228}) on the time interval $[1,\,t]$ yields
\begin{align}\label{dsvv54t230}&
e^{\gamma t}\|\Lambda^{\frac{1}{2}+\frac{n}{4}}\partial_{t}u(t)
\|_{L^{2}}^{2}+e^{\gamma t}\phi(t)+\int_{1}^{t}e^{\gamma \tau}\|\sqrt{\rho}\partial_{\tau\tau}u(\tau)\|_{L^{2}}^{2}\,d\tau\nonumber\\&\leq  \widetilde{C}+\gamma \int_{1}^{t}e^{\gamma \tau}\|\Lambda^{\frac{1}{2}+\frac{n}{4}}\partial_{\tau}u(\tau)
\|_{L^{2}}^{2}\,d\tau+\gamma \int_{1}^{t}e^{\gamma \tau}\phi(\tau)\,d\tau\nonumber\\&\quad+\int_{1}^{t}A(\tau)e^{\gamma \tau}\|\Lambda^{\frac{1}{2}+\frac{n}{4}}\partial_{\tau}u(\tau)\|_{L^{2}}\,d\tau +\int_{1}^{t}
B(\tau)e^{\gamma \tau}\|\Lambda^{\frac{1}{2}+\frac{n}{4}}\partial_{\tau}u(\tau)\|_{L^{2}}^{2}\,d\tau
\nonumber\\&\leq \widetilde{C}+2\gamma \int_{1}^{t}e^{\gamma \tau}\|\Lambda^{\frac{1}{2}+\frac{n}{4}}\partial_{\tau}u(\tau)
 \|_{L^{2}}^{2}\,d\tau+\gamma \int_{1}^{t}e^{\gamma \tau}\phi(\tau)\,d\tau\nonumber\\& \quad+C\int_{1}^{t}A^{2}(\tau)e^{\gamma \tau}\,d\tau +\int_{1}^{t}
B(\tau)e^{\gamma \tau}\|\Lambda^{\frac{1}{2}+\frac{n}{4}}\partial_{\tau}u(\tau)\|_{L^{2}}^{2}\,d\tau.
\end{align}
According to the estimates (\ref{addt030t003}), (\ref{521tt007}) and (\ref{xfrmu8}), it follows from (\ref{dfr56bn28}) that
\begin{eqnarray}\label{dsvv54t231}
e^{\gamma t}\left|\phi(t)\right|\leq \frac{1}{2}e^{\gamma t}\|\Lambda^{\frac{1}{2}+\frac{n}{4}}\partial_{t}u(t)\|_{L^{2}}^{2}
+\widetilde{C},
\end{eqnarray}
\begin{align}\label{dsvv54t232}
\gamma\int_{1}^{t}e^{\gamma \tau}\phi(\tau)\,d\tau\leq \widetilde{C}.
\end{align}
Appealing to the estimates (\ref{addt030t003}), (\ref{521tt007}) and (\ref{xfrmu8}) again, we can also show
\begin{eqnarray}\label{dsvv54t233}\gamma \int_{1}^{t}e^{\gamma \tau}\|\Lambda^{\frac{1}{2}+\frac{n}{4}}\partial_{\tau}u(\tau)
\|_{L^{2}}^{2}\,d\tau\leq \widetilde{C},
\end{eqnarray}
\begin{align}\label{dsvv54t234}
C\int_{1}^{t}A^{2}(\tau)e^{\gamma \tau}\,d\tau &= C\int_{1}^{t}(e^{\gamma \tau}\|\Lambda^{\frac{1}{2}+\frac{n}{4}}u(\tau)\|_{L^{2}}^{2})^{\frac{n+10}{n+2}}
(e^{\gamma \tau}\|\Lambda^{1+\frac{n}{2}}u(\tau)\|_{L^{2}}^{2})^{\frac{3n-2}{n+2}}e^{-3\gamma \tau}\,d\tau\nonumber\\
&\leq C\int_{1}^{t}e^{-3\gamma \tau}\,d\tau
\leq\widetilde{C}.
\end{align}
Inserting the above estimates (\ref{dsvv54t231})-(\ref{dsvv54t234}) into (\ref{dsvv54t230}) yields
$$e^{\gamma t}\|\Lambda^{\frac{1}{2}+\frac{n}{4}}\partial_{t}u(t)
 \|_{L^{2}}^{2}+\int_{1}^{t}e^{\gamma \tau}\|\sqrt{\rho}\partial_{\tau\tau}u(\tau)\|_{L^{2}}^{2}\,d\tau\leq \widetilde{C}+\int_{1}^{t}
B(\tau)e^{\gamma \tau}\|\Lambda^{\frac{1}{2}+\frac{n}{4}}\partial_{\tau}u(\tau)\|_{L^{2}}^{2}\,d\tau.$$
Similarly, it follows from the estimates (\ref{addt030t003}), (\ref{521tt007}) and (\ref{xfrmu8}) that
$$\int_{1}^{t}
B(\tau)\,d\tau\leq \widetilde{C}.$$
As a result, we have by the Gronwall inequality
\begin{eqnarray}
e^{\gamma t}\|\Lambda^{\frac{1}{2}+\frac{n}{4}}\partial_{t}u(t)
\|_{L^{2}}^{2} +\int_{1}^{t}{e^{\gamma \tau}\|\sqrt{\rho}\partial_{\tau\tau}u(\tau)\|_{L^{2}}^{2}\,d\tau} \leq \widetilde{C}.\nonumber
\end{eqnarray}
Consequently, we complete the proof of Lemma \ref{INDL226}.
\end{proof}

With the estimates of Lemma \ref{INDL226} at hand, we can obtain the following estimate.
\begin{lemma}\label{INDL227}
Under the assumptions of Theorem \ref{Th1}, the   solution $(\rho,u)$
of the system (\ref{INDNSE}) admits the following bound for any $t\geq1$,
\begin{eqnarray*} 
e^{\gamma t}\|\Lambda^{1+\frac{n}{2}}u(t)\|_{L^{\frac{4n}{n-2}}}^{2} +
e^{\gamma t}\|p(t)\|_{W^{1,\frac{4n}{n-2}}}^{2} \leq \widetilde{C},
\end{eqnarray*}
where $\widetilde{C}$ depends only on $\|\rho_{0}\|_{L^{\frac{2n}{n+2}}}$, $\|\rho_{0}\|_{L^{\infty}}$, $\|\nabla\rho_{0}\|_{L^{2}}$, $\|\sqrt{\rho_{0}}u_{0}\|_{L^{2}}$ and $\|\Lambda^{\frac{1}{2}+\frac{n}{4}}u_{0}\|_{L^{2}}$.
\end{lemma}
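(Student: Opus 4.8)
The plan is to obtain this lemma as a direct consequence of the Stokes system \eqref{521tt005} together with the exponential decay already established; it is in effect a harvesting step. Set $f:=-\rho\partial_t u-\rho u\cdot\nabla u$. Because $\Lambda^{1+\frac{n}{2}}=(-\Delta)^{\frac{1}{2}+\frac{n}{4}}$, taking the divergence of the momentum equation in \eqref{521tt005} shows that $\Lambda^{1+\frac{n}{2}}u$ and $\nabla p$ are, respectively, the divergence-free part $\mathbb{P}f$ and the gradient part $\mathbb{Q}f$ of $f$ in the Helmholtz decomposition, so by the Calder\'on--Zygmund theory (exactly as in \eqref{csroplk} and \eqref{keyusef3}, but now at the endpoint exponent $q=\frac{4n}{n-2}$),
\[ \|\Lambda^{1+\frac{n}{2}}u\|_{L^{\frac{4n}{n-2}}}+\|\nabla p\|_{L^{\frac{4n}{n-2}}}\leq C\big(\|\rho\partial_t u\|_{L^{\frac{4n}{n-2}}}+\|\rho u\cdot\nabla u\|_{L^{\frac{4n}{n-2}}}\big); \]
moreover, writing $p=\Lambda^{-1}(R\cdot f)$ with $R$ the Riesz transform and using the Hardy--Littlewood--Sobolev bound $\|\Lambda^{-1}g\|_{L^{\frac{4n}{n-2}}}\leq C\|g\|_{L^{\frac{4n}{n+2}}}$ gives $\|p\|_{L^{\frac{4n}{n-2}}}\leq C(\|\rho\partial_t u\|_{L^{\frac{4n}{n+2}}}+\|\rho u\cdot\nabla u\|_{L^{\frac{4n}{n+2}}})$. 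Thus everything reduces to estimating the two nonlinearities in $L^{\frac{4n}{n-2}}$ and in $L^{\frac{4n}{n+2}}$.

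For these I would use $\rho\in L^{\infty}$, together with the observation that $\rho_0\in L^{\frac{2n}{n+2}}\cap L^{\infty}$ interpolates into $L^{n}$ (this uses $n\geq3$), the critical Sobolev embedding $\dot H^{\frac{1}{2}+\frac{n}{4}}(\mathbb{R}^{n})\hookrightarrow L^{\frac{4n}{n-2}}(\mathbb{R}^{n})$, the embedding $\dot H^{\frac{1}{2}+\frac{n}{4}}\cap\dot H^{1+\frac{n}{2}}\hookrightarrow L^{\infty}$ (valid since $\frac12+\frac n4<\frac n2<1+\frac n2$ when $n\geq3$), the bound \eqref{dfcdsadr7}, and $\|\nabla u\|_{L^{\frac{4n}{n+2}}}\leq C\|\Lambda^{\frac{1}{2}+\frac{n}{4}}u\|_{L^{2}}$. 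Together these give
\[ \|\rho\partial_t u\|_{L^{\frac{4n}{n-2}}}+\|\rho\partial_t u\|_{L^{\frac{4n}{n+2}}}\leq C\|\Lambda^{\frac{1}{2}+\frac{n}{4}}\partial_t u\|_{L^{2}}, \]
\[ \|\rho u\cdot\nabla u\|_{L^{\frac{4n}{n-2}}}\leq C\|\Lambda^{\frac{1}{2}+\frac{n}{4}}u\|_{L^{2}}\|\Lambda^{1+\frac{n}{2}}u\|_{L^{2}},\qquad \|\rho u\cdot\nabla u\|_{L^{\frac{4n}{n+2}}}\leq C\|u\|_{L^{\infty}}\|\Lambda^{\frac{1}{2}+\frac{n}{4}}u\|_{L^{2}}. \]

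Finally I would plug in the decay rates. By \eqref{dsvv54t225} one has $\|\Lambda^{\frac{1}{2}+\frac{n}{4}}\partial_t u(t)\|_{L^{2}}\leq \widetilde{C}e^{-\gamma t/2}$ for $t\geq1$, so both occurrences of $\rho\partial_t u$ above are $O(e^{-\gamma t/2})$; by \eqref{addt030t003} and \eqref{xfrmu8}, $\|\Lambda^{\frac{1}{2}+\frac{n}{4}}u(t)\|_{L^{2}}$, $\|\Lambda^{1+\frac{n}{2}}u(t)\|_{L^{2}}$ and hence $\|u(t)\|_{L^{\infty}}$ are $O(e^{-\gamma t/2})$, so the two convective terms are $O(e^{-\gamma t})$. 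Squaring and multiplying by $e^{\gamma t}$ therefore leaves $\|\Lambda^{1+\frac{n}{2}}u(t)\|_{L^{\frac{4n}{n-2}}}^{2}$ and $\|p(t)\|_{W^{1,\frac{4n}{n-2}}}^{2}$ uniformly bounded for $t\geq1$, which is the asserted estimate. I do not expect any genuine obstacle here: the substance is already contained in Lemmas~\ref{INDL221}--\ref{INDL226}, and the only points needing care are the bookkeeping of the Sobolev exponents and the fact that $\rho\partial_t u$ --- decaying only at the half rate $e^{-\gamma t/2}$ in these norms --- is the bottleneck term that fixes the decay exponent and prevents a faster rate.
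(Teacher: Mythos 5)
Your proposal is correct and follows essentially the same route as the paper: Stokes/Calder\'on--Zygmund regularity for \eqref{521tt005} at the exponent $\frac{4n}{n-2}$, the Sobolev (HLS) step $\|p\|_{L^{\frac{4n}{n-2}}}\lesssim \|\rho\partial_t u\|_{L^{\frac{4n}{n+2}}}+\|\rho u\cdot\nabla u\|_{L^{\frac{4n}{n+2}}}$, the product estimate \eqref{dfcdsadr7}, and then harvesting the exponential decays from \eqref{addt030t003}, \eqref{xfrmu8} and \eqref{dsvv54t225}. The only (immaterial) deviation is that you bound $\|\rho\partial_t u\|_{L^{\frac{4n}{n+2}}}$ via $\rho\in L^{n}$ and H\"older, whereas the paper interpolates between $\|\sqrt{\rho}\partial_t u\|_{L^{2}}$ and $\|\rho\partial_t u\|_{L^{\frac{4n}{n-2}}}$; both give the same decay rate.
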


\begin{proof}
Using the regularity properties of the Stokes system (\ref{521tt005}), we get
\begin{align}
\|\Lambda^{1+\frac{n}{2}}u\|_{L^{\frac{4n}{n-2}}}+\|\nabla p\|_{L^{\frac{4n}{n-2}}}&\leq C\|\rho\partial_tu\|_{L^{\frac{4n}{n-2}}}
+C\|\rho u\cdot\nabla u\|_{L^{\frac{4n}{n-2}}}\nonumber\\
&\leq C\|{\rho}
\|_{L^{\infty}}\| \partial_tu\|_{L^{\frac{4n}{n-2}}}
+C\|{\rho}
\|_{L^{\infty}}\|u \nabla u\|_{L^{\frac{4n}{n-2}}}
\nonumber\\
&\leq C\|\Lambda^{\frac{1}{2}+\frac{n}{4}}\partial_{t}u
\|_{L^{2}}
+C\|\Lambda^{\frac{1}{2}+\frac{n}{4}}u\|_{L^{2}}\|\Lambda^{1+\frac{n}{2}}u\|_{L^{2}},
\nonumber
\end{align}
where  we have used (\ref{dfcdsadr7}) in the last line.
Recalling the  estimates obtained in the previous lemmas, we see that for any $t\geq 1$,
$$e^{\gamma t}\|\Lambda^{1+\frac{n}{2}}u(t)\|_{L^{\frac{4n}{n-2}}}^{2} +
e^{\gamma t}\|\nabla p(t)\|_{L^{\frac{4n}{n-2}}}^{2} \leq \widetilde{C}.$$
Similar argument also implies
\begin{align}
\|p\|_{L^{\frac{4n}{n-2}}}&\leq C\|\Lambda^{-1}(\rho\partial_tu\|_{L^{2}})\|_{L^{\frac{4n}{n-2}}}
+C\|\Lambda^{-1}(\rho u\cdot\nabla u)\|_{L^{\frac{4n}{n-2}}}\nonumber\\
&\leq  C\|\rho\partial_tu\|_{L^{\frac{4n}{n+2}}}
+C\|\rho u\cdot\nabla u\|_{L^{\frac{4n}{n+2}}}
\nonumber\\
&\leq C\|\rho\partial_tu\|_{L^{2}}^{\frac{4}{n+2}}\|\rho\partial_tu\|_{L^{\frac{4n}{n-2}}}
^{\frac{n-2}{n+2}}
+
C\|\rho \|_{L^{\infty}} \|u\|_{L^{\infty}}
\|\nabla u\|_{L^{\frac{4n}{n+2}}}
\nonumber\\
&\leq C(\|\sqrt{\rho}\partial_tu\|_{L^{2}}+\|\rho\partial_tu\|_{L^{\frac{4n}{n-2}}}
+\|\Lambda^{\frac{1}{2}+\frac{n}{4}}u\|_{L^{2}}^{\frac{4}{n+2}}
\|\Lambda^{1+\frac{n}{2}}u\|_{L^{2}}^{\frac{n-2}{n+2}}
\|\Lambda^{\frac{1}{2}+\frac{n}{4}}u\|_{L^{2}})
\nonumber\\
&\leq C(\|\sqrt{\rho}\partial_tu\|_{L^{2}}+\|\Lambda^{\frac{1}{2}+\frac{n}{4}}\partial_{t}u
\|_{L^{2}}
+\|\Lambda^{\frac{1}{2}+\frac{n}{4}}u\|_{L^{2}}^{\frac{n+6}{n+2}}
\|\Lambda^{1+\frac{n}{2}}u\|_{L^{2}}^{\frac{n-2}{n+2}}).\nonumber
\end{align}
Consequently, it gives that for any $t\geq1$,
$$e^{\gamma t}\|p(t)\|_{L^{\frac{4n}{n-2}}}^{2} \leq \widetilde{C}.$$
Moreover, we also deduce that for any $t\leq1$,
$$t^{2}\|\Lambda^{1+\frac{n}{2}}u(t)\|_{L^{\frac{4n}{n-2}}}^{2} +
t^{2}\|p(t)\|_{W^{1,\frac{4n}{n-2}}}^{2} \leq \widetilde{C}.$$
Hence, we obtain the desired estimates and thus complete the proof of the lemma.
\end{proof}

\vskip .1in
\subsection{The proof of Theorem \ref{Th1}}
We need the following Gronwall type inequality which will be used to guarantee the uniqueness of strong solutions (see \cite[Lemma 2.5]{lijink}).
\begin{lemma}\label{UNILemma}
Let $X_{1}(t),\,X_{2}(t),\,Y(t),\,\beta(t)$ and $\gamma(t)$ be non-negative functions. In addition, $\beta(t)$ and $t\gamma(t)$ are two integrable functions over $[0,\,T]$. Let $X_{1}(t)$ and $X_{2}(t)$ be absolutely continuous over $[0,\,T]$ and satisfy
\begin{equation*}
\left\{\begin{array}{l}
\frac{d}{dt}X_{1}(t)\leq AY^{\frac{1}{2}}(t) ,\vspace{2mm} \\
\frac{d}{dt}X_{2}(t)+Y(t)\leq \beta(t)X_{2}(t)+\gamma(t)X^{2}_{1}(t)
             \vspace{2mm}\\
X_{1}(0)=0,
\end{array}\right.
\end{equation*}
where $A$ is a positive constant. Then, the following estimates hold
\begin{eqnarray}X_{1}(t)\leq A X^{\frac{1}{2}}_{2}(0)t^{\frac{1}{2}}e^{\frac{1}{2}\int_{0}^{t}
(\beta(s)+A^{2}s\gamma(s))\,ds},\nonumber
\end{eqnarray}
\begin{eqnarray}X_{2}(t)+\int_{0}^{t}Y(s)\,ds\leq X_{2}(0)e^{\int_{0}^{t}
(\beta(s)+A^{2}s\gamma(s))\,ds}.\nonumber
\end{eqnarray}
In particular, if $X_{2}(0)=0$,   we have
\begin{eqnarray}
X_{1}(t)=X_{2}(t)=Y(t)\equiv0.\nonumber
\end{eqnarray}
\end{lemma}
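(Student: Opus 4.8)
The plan is to collapse the coupled system into a single scalar linear differential inequality by folding the integrated dissipation $\int_{0}^{t}Y\,ds$ into the unknown. First I would integrate the first inequality: since $X_{1}(0)=0$ and $\frac{d}{dt}X_{1}\le AY^{1/2}$ a.e., the Cauchy--Schwarz inequality gives, for every $t\in[0,T]$,
\[
X_{1}(t)=\int_{0}^{t}\tfrac{d}{ds}X_{1}(s)\,ds\le A\int_{0}^{t}Y^{1/2}(s)\,ds\le A\,t^{1/2}\Big(\int_{0}^{t}Y(s)\,ds\Big)^{1/2},
\]
so that, writing $Z(t):=\int_{0}^{t}Y(s)\,ds$, one has the time-weighted bound $X_{1}^{2}(t)\le A^{2}t\,Z(t)$. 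Substituting this into the second inequality yields $\frac{d}{dt}X_{2}(t)+Y(t)\le\beta(t)X_{2}(t)+A^{2}t\,\gamma(t)\,Z(t)$.

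Next I would set $G(t):=X_{2}(t)+Z(t)$, which is absolutely continuous with $G(0)=X_{2}(0)$ and $\frac{d}{dt}Z(t)=Y(t)$ a.e.; using the non-negativity of all quantities,
\[
\frac{d}{dt}G(t)=\frac{d}{dt}X_{2}(t)+Y(t)\le\beta(t)X_{2}(t)+A^{2}t\,\gamma(t)\,Z(t)\le\big(\beta(t)+A^{2}t\,\gamma(t)\big)G(t).
\]
Since $\beta$ and $t\gamma$ are integrable on $[0,T]$, multiplying by the integrating factor $\exp\!\big(-\int_{0}^{t}(\beta+A^{2}s\gamma)\,ds\big)$ and integrating gives
\[
G(t)\le X_{2}(0)\exp\!\Big(\int_{0}^{t}\big(\beta(s)+A^{2}s\,\gamma(s)\big)\,ds\Big),
\]
which, since $G(t)=X_{2}(t)+\int_{0}^{t}Y(s)\,ds$, is exactly the second asserted estimate. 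Plugging $Z(t)\le G(t)$ into $X_{1}^{2}(t)\le A^{2}t\,Z(t)$ and taking square roots yields the first. Finally, if $X_{2}(0)=0$ then $G\equiv0$ on $[0,T]$, hence $X_{2}\equiv0$ and $\int_{0}^{t}Y\equiv0$, so $Y\equiv0$ a.e., and then $X_{1}\equiv0$ from the first estimate.

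The computation itself is short; the genuine point, and what I expect to be the only real obstacle, is to justify the manipulations above — in particular that $\gamma(t)X_{1}^{2}(t)$, and hence $Y$, is integrable near $t=0$, since by hypothesis only $t\gamma(t)$ and not $\gamma(t)$ itself is assumed integrable there. This is exactly where the time-weighted bound $X_{1}^{2}(t)\le A^{2}t\,Z(t)$ of the first step is indispensable: because $t\gamma\in L^{1}(0,T)$ one may choose $\delta>0$ with $A^{2}\int_{0}^{\delta}s\gamma(s)\,ds<\tfrac12$, and then integrating the second inequality over $[0,t]$, using $X_{1}^{2}\le A^{2}s\,Z$ and the monotonicity of $Z$, forces $Z(t)\le X_{2}(0)+\|X_{2}\|_{L^{\infty}(0,T)}\|\beta\|_{L^{1}(0,T)}+\tfrac12Z(t)$ on $[0,\delta]$, so $Z$ is bounded there and $\gamma X_{1}^{2}\le A^{2}(t\gamma)Z\in L^{1}(0,\delta)$. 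On $[\delta,T]$ one has $\gamma=(t\gamma)/t\le(t\gamma)/\delta\in L^{1}$, so iterating over finitely many subintervals on each of which $A^{2}\int s\gamma\,ds<\tfrac12$ propagates the boundedness of $Z$ up to $T$; once $\gamma X_{1}^{2}\in L^{1}(0,T)$ is in hand, $Z$ is absolutely continuous, $Y\in L^{1}(0,T)$, and the Gronwall argument above is fully rigorous.
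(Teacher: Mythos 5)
The paper itself does not prove this lemma --- it is quoted verbatim from \cite[Lemma 2.5]{lijink} --- so your argument can only be judged on its own terms. Your main line is exactly the standard route: Cauchy--Schwarz in time gives $X_{1}^{2}(t)\le A^{2}t\,Z(t)$ with $Z(t)=\int_{0}^{t}Y\,ds$, and then a Gronwall argument for $G=X_{2}+Z$ with rate $\beta(t)+A^{2}t\gamma(t)$ yields both stated estimates and the vanishing statement when $X_{2}(0)=0$. This part is correct \emph{provided} $Y\in L^{1}(0,T)$, which makes $Z$ finite and absolutely continuous and makes $\gamma X_{1}^{2}\le A^{2}(t\gamma)Z$ integrable; in the paper's application this is automatic, since there $Y(t)=\|\Lambda^{\frac12+\frac n4}(u-\widetilde{u})(t)\|_{L^{2}}^{2}$ is bounded on $[0,T]$ by the regularity class of the two solutions.

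The genuine gap is in your final bootstrap, where you try to extract this integrability from the stated hypotheses alone. The absorption step ``$Z(t)\le C+\tfrac12 Z(t)$ on $[0,\delta]$, hence $Z\le 2C$'' is only legitimate if one already knows $Z(t)<\infty$; if $Z(t)=+\infty$ the inequality reads $\infty\le\infty$ and nothing is absorbed, so the argument is circular. Moreover, this finiteness genuinely cannot be derived from the hypotheses as written: take $T=\tfrac12$, $X_{2}\equiv 0$, $\beta\equiv 0$, $\gamma(t)=t^{-2}\bigl(\log(1/t)\bigr)^{-2}$ (so $t\gamma\in L^{1}$), $X_{1}(t)=\bigl(\log(1/t)\bigr)^{-A}$ and $Y=\gamma X_{1}^{2}=t^{-2}\bigl(\log(1/t)\bigr)^{-2A-2}$. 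Then $X_{1}$ is absolutely continuous with $X_{1}(0)=0$, $\tfrac{d}{dt}X_{1}=AY^{\frac12}$ and $\tfrac{d}{dt}X_{2}+Y=\beta X_{2}+\gamma X_{1}^{2}$ hold pointwise, so every hypothesis of the lemma is met, yet $X_{1}\not\equiv 0$ and $\int_{0}^{t}Y\,ds=\infty$, so both conclusions (and the uniqueness assertion) fail. In other words, your diagnosis that integrability of $\gamma X_{1}^{2}$ near $t=0$ is ``the only real obstacle'' is right, but it is not an obstacle that can be argued away: the lemma requires $Y\in L^{1}(0,T)$ (or an equivalent assumption guaranteeing the integrated form of the second inequality has finite terms) as an explicit hypothesis. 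Once that hypothesis is added --- and it is harmless where the lemma is used in this paper --- your first two paragraphs already constitute a complete proof, and the subinterval iteration of your last paragraph should simply be deleted.
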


We continue to prove our theorem.
The desired bounds of Theorem \ref{Th1} follow directly by putting together all the estimates of the above Lemmas \ref{INDL221}-\ref{INDL227}. Thus it remains to show the uniqueness. To this end, we make use of the following two
momentum conservation equations
$$\rho\partial_tu + \rho u\cdot\nabla u +(-\Delta)^{\frac{1}{2}+\frac{n}{4}}u+\nabla p =0,\quad \widetilde{\rho}\partial_t\widetilde{u} + \widetilde{\rho} \widetilde{u}\cdot\nabla \widetilde{u} +(-\Delta)^{\frac{1}{2}+\frac{n}{4}}\widetilde{u}+\nabla \widetilde{p} =0,$$
to obtain
 $$\rho\partial_t(u-\widetilde{u}) + \rho u\cdot\nabla (u-\widetilde{u}) +(-\Delta)^{\frac{1}{2}+\frac{n}{4}}(u-\widetilde{u})+\nabla (p-\widetilde{p}) =-(\rho-\widetilde{\rho})(\partial_t\widetilde{u}+\widetilde{u}\cdot\nabla \widetilde{u})-\rho(u-\widetilde{u})\cdot\nabla \widetilde{u}.$$
Now we deduce by multiplying the above identity by $u-\widetilde{u}$ and integrating it over $\mathbb{R}^{n}$,
\begin{eqnarray*}
 \frac{1}{2}\frac{d}{dt} \|\sqrt{\rho}(u-\widetilde{u})(t)\|_{L^{2}}^{2} +\|\Lambda^{\frac{1}{2}+\frac{n}{4}}(u-\widetilde{u})\|_{L^{2}}^{2}=J_{1}+J_{2},
\end{eqnarray*}
where
$$J_{1}:=-\int_{\mathbb{R}^{n}}(\rho-\widetilde{\rho})(\partial_t\widetilde{u}+\widetilde{u}\cdot\nabla \widetilde{u})\cdot(u-\widetilde{u})\,dx,$$
$$J_{2}:=-\int_{\mathbb{R}^{n}}\rho(u-\widetilde{u})\cdot\nabla \widetilde{u}\cdot(u-\widetilde{u})\,dx.$$
The term $J_{2}$ can be bounded by
$$J_{2}\leq C\|\nabla \widetilde{u}\|_{L^{\infty}}\|\sqrt{\rho}(u-\widetilde{u})\|_{L^{2}}^{2}.$$
For the term $J_{1}$, we have by (\ref{dfcdsadr7}),
\begin{align}
J_{1}&\leq
C\|\rho-\widetilde{\rho}\|_{L^{\frac{2n}{n+2}}}
(\|\partial_t\widetilde{u}\|_{L^{\frac{4n}{n-2}}}+\|\widetilde{u}\cdot\nabla \widetilde{u}\|_{L^{\frac{4n}{n-2}}}
)\|u-\widetilde{u}\|_{L^{\frac{4n}{n-2}}}\nonumber\\
&\leq
C\|\rho-\widetilde{\rho}\|_{L^{\frac{2n}{n+2}}}
(\|\Lambda^{\frac{1}{2}+\frac{n}{4}}\partial_t\widetilde{u}\|_{L^{2}}
+\|\Lambda^{\frac{1}{2}+\frac{n}{4}}\widetilde{u}\|_{L^{2}}
\|\Lambda^{1+\frac{n}{2}}\widetilde{u}\|_{L^{2}}
)\|\Lambda^{\frac{1}{2}+\frac{n}{4}}(u-\widetilde{u})\|_{L^{2}}
\nonumber\\
&\leq
\frac{1}{2}\|\Lambda^{\frac{1}{2}+\frac{n}{4}}(u-\widetilde{u})\|_{L^{2}}^{2}+C
(\|\Lambda^{\frac{1}{2}+\frac{n}{4}}\partial_t\widetilde{u}\|_{L^{2}}^{2}
+\|\Lambda^{\frac{1}{2}+\frac{n}{4}}\widetilde{u}\|_{L^{2}}^{2}\|\Lambda^{1+\frac{n}{2}} \widetilde{u}\|_{L^{2}}^{2}
)\|\rho-\widetilde{\rho}\|_{L^{\frac{2n}{n+2}}}^{2}.\nonumber
\end{align}
We therefore obtain
\begin{align*}
& \frac{d}{dt} \|\sqrt{\rho}(u-\widetilde{u})(t)\|_{L^{2}}^{2} +\|\Lambda^{\frac{1}{2}+\frac{n}{4}}(u-\widetilde{u})\|_{L^{2}}^{2}\nonumber\\&\leq C(\|\Lambda^{\frac{1}{2}+\frac{n}{4}}\partial_t\widetilde{u}\|_{L^{2}}^{2}
+\|\Lambda^{\frac{1}{2}+\frac{n}{4}}\widetilde{u}\|_{L^{2}}^{2}
\|\Lambda^{1+\frac{n}{2}} \widetilde{u}\|_{L^{2}}^{2}
)\|\rho-\widetilde{\rho}\|_{L^{\frac{2n}{n+2}}}^{2}
\nonumber\\& \quad+
C\|\nabla \widetilde{u}\|_{L^{\infty}}\|\sqrt{\rho}(u-\widetilde{u})\|_{L^{2}}^{2}.
\end{align*}
Using the following two density equations
$$\partial_t \rho+u\cdot\nabla\rho=0,\quad \partial_t \widetilde{\rho}+\widetilde{u}\cdot\nabla\widetilde{\rho}=0,$$
we deduce
\begin{eqnarray*} 
\partial_t (\rho-\widetilde{\rho})+u\cdot\nabla(\rho-\widetilde{\rho})=-(u-\widetilde{u})
\cdot\nabla\widetilde{\rho}.
\end{eqnarray*}
It implies that
\begin{align}
 \frac{n+2}{2n}\frac{d}{dt} \|(\rho-\widetilde{\rho})(t)\|_{L^{\frac{2n}{n+2}}}^{\frac{2n}{n+2}} &\leq C\|\rho-\widetilde{\rho}\|_{L^{\frac{2n}{n+2}}}^{\frac{2n}{n+2}-1}
 \|(u-\widetilde{u})
\cdot\nabla\widetilde{\rho}\|_{L^{\frac{2n}{n+2}}}\nonumber\\
&\leq C\|\rho-\widetilde{\rho}\|_{L^{\frac{2n}{n+2}}}^{\frac{2n}{n+2}-1}
 \|u-\widetilde{u}\|_{L^{\frac{4n}{n-2}}} \|\nabla\widetilde{\rho}\|_{L^{\frac{4n}{n+6}}}
 \nonumber\\
&\leq C\|\rho-\widetilde{\rho}\|_{L^{\frac{2n}{n+2}}}^{\frac{2n}{n+2}-1}
 \|\Lambda^{\frac{1}{2}+\frac{n}{4}}(u-\widetilde{u})\|_{L^{2}} \|\nabla\widetilde{\rho}\|_{L^{\frac{4n}{n+6}}}.\nonumber
\end{align}
We may conclude
$$\frac{d}{dt} \|(\rho-\widetilde{\rho})(t)\|_{L^{\frac{2n}{n+2}}}\leq C\|\Lambda^{\frac{1}{2}+\frac{n}{4}}(u-\widetilde{u})\|_{L^{2}} \|\nabla\widetilde{\rho}\|_{L^{\frac{4n}{n+6}}}.$$
Now let us denote
$$X_{1}(t):=\|(\rho-\widetilde{\rho})(t)\|_{L^{\frac{2n}{n+2}}},\quad
X_{2}(t):=\|\sqrt{\rho}(u-\widetilde{u})(t)\|_{L^{2}}^{2},\quad Y(t):=\|\Lambda^{\frac{1}{2}+\frac{n}{4}}(u-\widetilde{u})(t)\|_{L^{2}}^{2},$$
$$\beta(t):=C\|\nabla \widetilde{u}(t)\|_{L^{\infty}},\quad \gamma(t):=C
(\|\Lambda^{\frac{1}{2}+\frac{n}{4}}\partial_t\widetilde{u}(t)\|_{L^{2}}^{2}
+\|\Lambda^{\frac{1}{2}+\frac{n}{4}}\widetilde{u}(t)\|_{L^{2}}^{2}\|\Lambda^{1+\frac{n}{2}} \widetilde{u}(t)\|_{L^{2}}^{2}
),$$
which satisfy
\begin{equation*}
\left\{\begin{array}{l}
\frac{d}{dt}X_{1}(t)\leq AY^{\frac{1}{2}}(t) ,\vspace{2mm} \\
\frac{d}{dt}X_{2}(t)+Y(t)\leq \beta(t)X_{2}(t)+\gamma(t)X^{2}_{1}(t),
             \vspace{2mm}\\
X_{1}(0)=0.
\end{array}\right.
\end{equation*}
Recalling (\ref{addt030t003}), (\ref{521tt007}), (\ref{xctoplk243}) and (\ref{521tt009}), we know that
$$\int_{0}^{t}{\beta(\tau) \,d\tau}\leq C_{0}(t),\qquad \int_{0}^{t}{\tau\gamma(\tau) \,d\tau}\leq C_{0}(t).$$
Due to $u(x,0)=\widetilde{u}(x,0)$, we have $X_{2}(0)=0$.
Making use of the Gronwall type inequality  in  Lemma \ref{UNILemma}, we immediately have the uniqueness, namely,
$$u(x,t)=\widetilde{u}(x,t),\quad \rho(x,t)=\widetilde{\rho}(x,t).$$
This completes the proof of Theorem \ref{Th1}.
\bigskip

\appendix
\section{The case of dimension $n=2$}

As a byproduct of the approach in the proof of Theorem \ref{Th1}, we also obtain the  exponential decay-in-time of the strong solution in dimension $n=2$ provided that a damping term $u$ is added in the momentum equation. More precisely, we have the following result.
\begin{thm}\label{Th2}
Consider the following system
\begin{equation}\label{SINDNSE}
\left\{\begin{array}{l}
\partial_t \rho+{\rm div}(\rho u)
       =0 ,  \qquad   x\in \mathbb{R}^{2},\,t>0,\\
\partial_t(\rho u) + {\rm div}(\rho u\otimes u) -\Delta u+u+\nabla p =0,
              \\
\nabla\cdot u=0, \\
\rho(x,0)=\rho_{0}(x),\quad u(x,0)=u_{0}(x).
\end{array}\right.
\end{equation}
Assume that the initial data $(\rho_{0},\,u_{0})$
satisfies the following conditions:
$$0\leq\rho_{0}\in L^{1}(\mathbb{R}^{2})\cap L^{\infty}(\mathbb{R}^{2}),\quad \nabla\rho_{0}\in  L^{q}(\mathbb{R}^{2}),\quad q>2,$$
$$ \nabla\cdot u_{0}=0,\quad  u_{0}\in H^{1}(\mathbb{R}^{2}),\quad \sqrt{\rho_{0}}u_{0}\in L^{2}(\mathbb{R}^{2}).$$
Then the system (\ref{SINDNSE}) has a unique global strong
solution $(\rho,u)$ satisfying,  for any given $T>0$ and for any $0<\tau<T$,
$$0\leq\rho\in L^{\infty}(0,T; L^{1}(\mathbb{R}^{2})\cap L^{\infty}(\mathbb{R}^{2})),\quad \nabla\rho\in L^{\infty}(0,T; L^{q}(\mathbb{R}^{2}),$$
$$  u \in L^{\infty}(0,T; {H}^{1}(\mathbb{R}^{2}))\cap L^{2}(0,T; {H}^{2}(\mathbb{R}^{2}))\cap L^{\infty}(\tau,T; {\dot{W}}^{2,\,m}(\mathbb{R}^{2})),$$
$$ \sqrt{\rho}\partial_{t}u\in L^{\infty}(\tau,T; L^{2}(\mathbb{R}^{2})),\quad
\partial_{t}u\in L^{2}(\tau,T;  {H}^{1}(\mathbb{R}^{2}))\cap L^{\infty}(\tau,T;  {H}^{1}(\mathbb{R}^{2})),$$
$$\nabla p \in L^{\infty}(\tau,T; L^{2}(\mathbb{R}^{2})\cap {L}^{m}(\mathbb{R}^{2})),$$
for any $m\in(2,\,\infty)$.
Moreover, there exists some positive constant $\gamma$ depending only on $\|\rho_{0}\|_{L^{1}}$ and $\|\rho_{0}\|_{L^{\infty}}$ such that,  for all $t\geq1$,
\begin{eqnarray}\|\sqrt{\rho}\partial_{t}u(t)\|_{L^{2}}^{2}
+\|u(t)\|_{H^{2}}^{2} +\|\Delta u(t)\|_{L^{m}}^{2}+\|\partial_{t}u(t)\|_{H^{1}}^{2}+\|\nabla p(t)\|_{L^{2}\cap L^{m}}^{2}
\leq \widetilde{C}e^{-\gamma t},  \nonumber
\end{eqnarray}
where $\widetilde{C}$ depends only on $\|\rho_{0}\|_{L^{1}}$, $\|\rho_{0}\|_{L^{\infty}}$, $\|\nabla\rho_{0}\|_{L^{q}}$, $\|\sqrt{\rho_{0}}u_{0}\|_{L^{2}}$ and $\|u_{0}\|_{H^{1}}$.
\end{thm}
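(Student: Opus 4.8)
The plan is to repeat, essentially step by step, the six stages of the proof of Theorem \ref{Th1}. Two things change in dimension $n=2$: the exponent $\tfrac12+\tfrac n4$ degenerates to $1$, so that $(-\Delta)^{1/2+n/4}=-\Delta$ and the natural higher–order velocity spaces become $H^{1}$ and then $H^{2}$; and the coercive quantity on the density side is supplied by the damping term $+u$ rather than by a Sobolev embedding of a conserved density norm. Local existence and uniqueness of a strong solution on some $[0,T^{\ast}]$ follow exactly as in Lemma \ref{LT} (the zeroth–order term $u$ is harmless), so the whole task is to promote it to a global solution via a chain of a priori estimates. We first record that, since $\nabla\cdot u=0$, the transport equation $(\ref{SINDNSE})_{1}$ conserves every Lebesgue norm of $\rho$; in particular $\|\rho(t)\|_{L^{2}}^{2}=\|\rho_{0}\|_{L^{2}}^{2}\le\|\rho_{0}\|_{L^{1}}\|\rho_{0}\|_{L^{\infty}}$.

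\textbf{Step 1: basic energy and exponential decay of $\|\sqrt\rho u\|_{L^{2}}^{2}$ and $\|u\|_{H^{1}}^{2}$.} Testing $(\ref{SINDNSE})_{2}$ with $u$ gives
\[
\tfrac12\tfrac{d}{dt}\|\sqrt\rho u(t)\|_{L^{2}}^{2}+\|\nabla u\|_{L^{2}}^{2}+\|u\|_{L^{2}}^{2}=0 ,
\]
and the two–dimensional Gagliardo–Nirenberg inequality yields $\|\sqrt\rho u\|_{L^{2}}^{2}\le\|\rho\|_{L^{2}}\|u\|_{L^{4}}^{2}\le C\|\rho_{0}\|_{L^{2}}\|u\|_{H^{1}}^{2}$, so the dissipation $\|u\|_{H^{1}}^{2}$ dominates $\|\sqrt\rho u\|_{L^{2}}^{2}$; Gronwall then produces $e^{\gamma t}\|\sqrt\rho u(t)\|_{L^{2}}^{2}+\int_{0}^{t}e^{\gamma\tau}\|u(\tau)\|_{H^{1}}^{2}\,d\tau\le\|\sqrt{\rho_{0}}u_{0}\|_{L^{2}}^{2}$ with $\gamma$ depending only on $\|\rho_{0}\|_{L^{1}},\|\rho_{0}\|_{L^{\infty}}$. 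This is the analogue of Lemma \ref{INDL221}, and it is here — and only here — that the damping is indispensable, since $\mathbb{R}^{2}$ carries no Poincaré inequality and without it only algebraic decay is available (\cite{LsZhong}). Next, testing $(\ref{SINDNSE})_{2}$ with $\partial_{t}u$, bounding $\int\rho u\cdot\nabla u\cdot\partial_{t}u$ by $\|u\cdot\nabla u\|_{L^{2}}\le C\|u\|_{H^{1}}\|\nabla u\|_{L^{2}}^{1/2}\|\nabla^{2}u\|_{L^{2}}^{1/2}$ together with the Stokes bound $\|u\|_{H^{2}}\le C(\|\sqrt\rho\partial_{t}u\|_{L^{2}}+\|u\cdot\nabla u\|_{L^{2}}+\|u\|_{L^{2}})$ (the analogue of (\ref{csroplk})) and absorbing, one propagates the decay to $\|u(t)\|_{H^{1}}^{2}$ and obtains $\int_{0}^{t}e^{\gamma\tau}(\|u(\tau)\|_{H^{2}}^{2}+\|\sqrt\rho\partial_{\tau}u(\tau)\|_{L^{2}}^{2})\,d\tau\le\widetilde C$, the analogue of Lemma \ref{INDL222}.

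\textbf{Steps 2--4: $\partial_{t}u$, $\partial_{tt}u$, and $\nabla\rho$.} Differentiating $(\ref{SINDNSE})_{2}$ in $t$ and testing first with $\partial_{t}u$, then with $\partial_{tt}u$, reproduces the energy identities of Lemmas \ref{INDL223} and \ref{INDL226} with every $\dot H^{1/2+n/4}$ norm replaced by $\|\nabla\,\cdot\,\|_{L^{2}}$. Because no compatibility condition is assumed on $u_{0}$, one first closes the purely time–weighted bounds $t\|\sqrt\rho\partial_{t}u\|_{L^{2}}^{2}+\int_{0}^{t}\tau\|\nabla\partial_{\tau}u\|_{L^{2}}^{2}\,d\tau\le\widetilde C$ and $t^{2}\|\nabla\partial_{t}u\|_{L^{2}}^{2}+\int_{0}^{t}\tau^{2}\|\sqrt\rho\partial_{\tau\tau}u\|_{L^{2}}^{2}\,d\tau\le\widetilde C$, whose constants do not see the (non–assumed) initial data of $\sqrt\rho\partial_{t}u$ and $\nabla\partial_{t}u$; then, restricting to $t\ge1$ and using the decay already obtained, one upgrades the weights $t,t^{2}$ to $e^{\gamma t}$ as in (\ref{521tt007}) and (\ref{dsvv54t225}). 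In between, $L^{p}$–interpolation of $\rho\partial_{t}u$ and $\rho u\cdot\nabla u$ (as in (\ref{keyusef1})--(\ref{keyusef2})), Stokes $W^{2,p}$–regularity for $(\ref{SINDNSE})_{2}$ with $2<p<\infty$, and the two–dimensional embeddings $H^{1}\hookrightarrow L^{p}$ ($p<\infty$), $H^{2}\hookrightarrow L^{\infty}$ give $\int_{0}^{t}\|\nabla u(\tau)\|_{L^{\infty}}\,d\tau\le\widetilde C$, which together with $\partial_{t}\nabla\rho+u\cdot\nabla\nabla\rho=-\nabla u\cdot\nabla\rho$ yields $\|\nabla\rho(t)\|_{L^{q}}\le\|\nabla\rho_{0}\|_{L^{q}}\exp\!\big(\int_{0}^{t}\|\nabla u\|_{L^{\infty}}\,d\tau\big)\le\widetilde C$; this $L^{q}$ bound on $\nabla\rho$ is precisely what controls the terms containing $\partial_{t}\rho=-u\cdot\nabla\rho$ in the $\partial_{tt}u$ identity. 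Finally, Stokes $L^{m}$–estimates for $(\ref{SINDNSE})_{2}$ and the argument of Lemma \ref{INDL227} give $u\in\dot W^{2,m}$ and $\nabla p\in L^{2}\cap L^{m}$ with the same exponential decay for $t\ge1$.

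\textbf{Step 5: uniqueness, and the main difficulty.} For two solutions with identical data, the difference $(u-\widetilde u,\rho-\widetilde\rho)$ obeys the same transport/momentum system as in the proof of Theorem \ref{Th1}; testing the momentum difference with $u-\widetilde u$ and estimating the density difference in $L^{r}$ with $\tfrac1r=\tfrac1q+\tfrac1s$ for some large finite $s$ (using $H^{1}\hookrightarrow L^{s}$) casts the system in the form required by Lemma \ref{UNILemma}, with $X_{1}=\|(\rho-\widetilde\rho)(t)\|_{L^{r}}$, $X_{2}=\|\sqrt\rho(u-\widetilde u)(t)\|_{L^{2}}^{2}$, $Y=\|(u-\widetilde u)(t)\|_{H^{1}}^{2}$, $\beta\sim\|\nabla\widetilde u\|_{L^{\infty}}$ and $\gamma\sim\|\nabla\partial_{t}\widetilde u\|_{L^{2}}^{2}+\|\nabla\widetilde u\|_{L^{2}}^{2}\|\nabla^{2}\widetilde u\|_{L^{2}}^{2}$; the constant $A$ is finite because $\|\nabla\widetilde\rho\|_{L^{q}}\le\widetilde C\|\nabla\rho_{0}\|_{L^{q}}$, and Steps 1--4 furnish $\int_{0}^{t}\beta\,d\tau<\infty$ and $\int_{0}^{t}\tau\gamma\,d\tau<\infty$, so $X_{2}(0)=0$ forces $u\equiv\widetilde u$, $\rho\equiv\widetilde\rho$. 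The genuine obstacle, exactly as for Theorem \ref{Th1}, is that dropping the compatibility condition prevents closing the energy identities for $\partial_{t}u$ and $\partial_{tt}u$ directly: one is forced through the two–stage scheme (a polynomial–in–$t$ weighted bound valid for all $t\ge0$, then an exponential bound valid only for $t\ge1$), and the uniqueness step must invoke the non–standard Gronwall inequality of Lemma \ref{UNILemma}, since only $\int_{0}^{t}\tau\|\nabla\partial_{\tau}u\|_{L^{2}}^{2}\,d\tau$ — not $\int_{0}^{t}\|\nabla\partial_{\tau}u\|_{L^{2}}^{2}\,d\tau$ — is available. Apart from replacing the conserved density norm by the damping term to supply the missing zeroth–order coercivity in $\mathbb{R}^{2}$, everything else is routine bookkeeping with the $n=2$ Sobolev exponents, and collecting all bounds proves Theorem \ref{Th2}.
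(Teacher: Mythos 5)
Your overall architecture (damping-supplied coercivity for the basic energy, time-weighted bounds before exponential ones, $\int_0^t\|\nabla u\|_{L^\infty}d\tau$ to propagate $\nabla\rho\in L^q$, uniqueness via Lemma \ref{UNILemma}) matches the paper, but there is a genuine gap at the first higher-order step, and it is precisely the point where the paper does something different from Theorem \ref{Th1}. You propose to propagate the decay of $\|u\|_{H^1}^2$ by testing the momentum equation with $\partial_t u$, bounding $\int\rho u\cdot\nabla u\cdot\partial_t u$ via $\|u\cdot\nabla u\|_{L^2}\le C\|u\|_{H^1}\|\nabla u\|_{L^2}^{1/2}\|\nabla^2u\|_{L^2}^{1/2}$ and the Stokes bound $\|u\|_{H^2}\lesssim\|\sqrt\rho\partial_t u\|_{L^2}+\|u\cdot\nabla u\|_{L^2}+\|u\|_{L^2}$, "and absorbing." Carrying this out, the worst term after absorption is $C\|u\|_{H^1}^{3/2}\|\sqrt\rho\partial_t u\|_{L^2}^{3/2}\le\frac14\|\sqrt\rho\partial_t u\|_{L^2}^2+C\|u\|_{H^1}^6$, so you arrive at $\frac{d}{dt}\|u\|_{H^1}^2+\|\sqrt\rho\partial_t u\|_{L^2}^2\le C\|u\|_{H^1}^6$, i.e. $f'\le Cf^3$ with only $\int_0^\infty f\,d\tau<\infty$ known from Lemma \ref{SINDL31}. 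This does not close: $f(t)=(T-t)^{-1/2}$ satisfies $f'=\tfrac12 f^3$ and $\int_0^T f\,dt<\infty$ yet blows up, so no Gronwall-type argument at this exponent can work. This is exactly the 2D criticality the paper's remark after Theorem \ref{Th2} warns about. The paper's Lemma \ref{SINDL32} (and again Lemma \ref{SINDL33}) instead works with the material derivative $\dot u=\partial_t u+u\cdot\nabla u$, tests \eqref{r58t004} with $\dot u$, and uses two structural facts: the two-dimensional identity $\int_{\mathbb{R}^2}(u\cdot\nabla u)\cdot\Delta u\,dx=0$, which removes the convection term entirely, and the div-curl/Hardy--BMO estimate $-\int\dot u\cdot\nabla p\,dx=\int\partial_j u_i\partial_i u_j\,p\,dx\le C\|\nabla p\|_{L^2}\|\nabla u\|_{L^2}^2$ combined with $\|\nabla p\|_{L^2}\le C\|\sqrt\rho\dot u\|_{L^2}$ from \eqref{addr58t006} (and, for the weighted step, $\|\nabla u\|_{L^4}^4+\|p\|_{L^4}^4\le C\|\rho\|_{L^2}^2\|\sqrt\rho\dot u\|_{L^2}^4$). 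This yields the closable quartic inequality $\frac{d}{dt}\|u\|_{H^1}^2+\|\sqrt\rho\dot u\|_{L^2}^2\le C\|u\|_{H^1}^4$, which Gronwall handles with $\int\|u\|_{H^1}^2\,d\tau<\infty$. Your sketch contains no substitute for this mechanism, so Steps 1 (second half) and the $t$-weighted estimate in Step 2 are not established as written.

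A related, smaller issue: you claim the identities of Lemma \ref{INDL223} transplant to $n=2$ "with every $\dot H^{1/2+n/4}$ norm replaced by $\|\nabla\cdot\|_{L^2}$." With vacuum this is delicate, because terms such as $\int\rho\,\partial_t u\cdot\nabla u\cdot\partial_t u$ require $\|\partial_t u\|_{L^4}$, and in 2D there is no homogeneous embedding $\dot H^1\hookrightarrow L^4$; $\|\partial_t u\|_{L^2}$ is not controlled by $\|\sqrt\rho\partial_t u\|_{L^2}$ when $\rho$ vanishes. The paper circumvents this by staying with $\dot u$ (following \cite{LsZhong}) up through Lemma \ref{SINDL34}, and only at the last stage (Lemma \ref{SINDL35}) tests the time-differentiated equation with $\partial_{tt}u$, where the damping term contributes the full $\|\partial_t u\|_{H^1}^2$ to the propagated quantity so that inhomogeneous embeddings $\|\partial_t u\|_{L^4}\le C\|\partial_t u\|_{H^1}$ are legitimate. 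If you want to keep your $\partial_t u$-based scheme at the intermediate level, you must either exploit the damping-induced $\|\partial_t u\|_{L^2}^2$ dissipation explicitly there, or adopt the paper's $\dot u$ formulation; as written, the proposal is missing the key 2D idea.
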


\begin{rem}\rm
When the damping term $u$ is absent from the system (\ref{SINDNSE}), it seems difficult to obtain the exponential decay of the strong solution as in Theorem \ref{Th2}. The key obstacle is that the classical Sobolev embedding inequality is critical in dimension $n=2$. However, if the initial density decays not too slowly at infinity, then it is  proved in \cite{LsZhong} that the corresponding system admits a unique global strong solution. Moreover, the following large-time decay rates were obtained:
$\|\nabla u(t)\|_{L^{2}}+\|\nabla^{2} u(t)\|_{L^{2}}+\|\nabla p(t)\|_{L^{2}}
\leq \widetilde{C}t^{-1}.$
\end{rem}

As the proof of Theorem \ref{Th2} can be carried out  as that of Theorem \ref{Th1} with some suitable modifications, we only give a sketch of the proof in this appendix.
First, the basic energy estimates read as follows.
\begin{lemma}\label{SINDL31}
Under the assumptions of Theorem \ref{Th2}, the   solution $(\rho,u)$
of the system (\ref{SINDNSE}) admits the following bound for any $t\geq0$,
\begin{eqnarray}\label{r58t001}
\|\rho(t)\|_{L^{1}\cap L^{\infty}}\leq
\|\rho_{0}\|_{L^{1}\cap L^{\infty}},\quad e^{\gamma t}\|\sqrt{\rho}u(t)\|_{L^{2}}^{2} + \int_{0}^{t}{e^{\gamma \tau}\|u(\tau)\|_{H^{1}}^{2} \,d\tau}\leq  \|\sqrt{\rho_{0}}u_{0}\|_{L^{2}}^{2}.
\end{eqnarray}
\end{lemma}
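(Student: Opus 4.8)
The plan is to follow the proof of Lemma~\ref{INDL221}, with the damping term $u$ in $(\ref{SINDNSE})_2$ taking over the role played there by the fractional dissipation. First, since $\nabla\cdot u=0$, equation $(\ref{SINDNSE})_1$ is a pure transport equation for $\rho$; multiplying it by $|\rho|^{p-2}\rho$, integrating over $\mathbb{R}^{2}$ and using $\nabla\cdot u=0$ gives $\frac{d}{dt}\|\rho(t)\|_{L^{p}}=0$ for every $1\le p<\infty$, hence $\|\rho(t)\|_{L^{p}}\le\|\rho_{0}\|_{L^{p}}$, and letting $p\to\infty$ yields the $L^{\infty}$ bound. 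This proves $\|\rho(t)\|_{L^{1}\cap L^{\infty}}\le\|\rho_{0}\|_{L^{1}\cap L^{\infty}}$ and, by interpolation, $\|\rho(t)\|_{L^{2}}^{2}\le\|\rho(t)\|_{L^{1}}\|\rho(t)\|_{L^{\infty}}\le\|\rho_{0}\|_{L^{1}}\|\rho_{0}\|_{L^{\infty}}$, which is used below.

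Next I would derive the basic energy identity: multiplying $(\ref{SINDNSE})_2$ by $u$, integrating by parts, and using $(\ref{SINDNSE})_1$ together with $\nabla\cdot u=0$ to dispose of the convective and pressure terms, one obtains
\begin{equation*}
\frac{1}{2}\frac{d}{dt}\|\sqrt{\rho}u(t)\|_{L^{2}}^{2}+\|\nabla u\|_{L^{2}}^{2}+\|u\|_{L^{2}}^{2}=0,
\end{equation*}
that is, $\frac{1}{2}\frac{d}{dt}\|\sqrt{\rho}u(t)\|_{L^{2}}^{2}+\|u\|_{H^{1}}^{2}=0$. The only genuine point is then the two-dimensional analogue of $(\ref{key1})$: by the Ladyzhenskaya/Sobolev embedding $H^{1}(\mathbb{R}^{2})\hookrightarrow L^{4}(\mathbb{R}^{2})$,
\begin{equation*}
\|\sqrt{\rho}u\|_{L^{2}}\le\|\sqrt{\rho}\|_{L^{4}}\|u\|_{L^{4}}=\|\rho\|_{L^{2}}^{\frac12}\|u\|_{L^{4}}\le C_{\star}\|\rho_{0}\|_{L^{1}}^{\frac14}\|\rho_{0}\|_{L^{\infty}}^{\frac14}\|u\|_{H^{1}},
\end{equation*}
so that, combining with the energy identity, $\frac{d}{dt}\|\sqrt{\rho}u(t)\|_{L^{2}}^{2}+\gamma\|\sqrt{\rho}u(t)\|_{L^{2}}^{2}+\|u\|_{H^{1}}^{2}\le0$ with $\gamma:=\big(C_{\star}^{2}\|\rho_{0}\|_{L^{1}}^{1/2}\|\rho_{0}\|_{L^{\infty}}^{1/2}\big)^{-1}$, a constant depending only on $\|\rho_{0}\|_{L^{1}}$ and $\|\rho_{0}\|_{L^{\infty}}$. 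Multiplying by $e^{\gamma t}$, integrating over $[0,t]$ and using the Gronwall inequality gives exactly $(\ref{r58t001})$.

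There is no real obstacle here; the one thing worth flagging is conceptual, and it explains the hypotheses of Theorem~\ref{Th2}. In $\mathbb{R}^{2}$ there is no Poincar\'e inequality and $\dot H^{1}\not\hookrightarrow L^{\infty}$, so the bare dissipation $-\Delta$ would only control $\|\nabla u\|_{L^{2}}^{2}$, which is not enough to run the estimate above in the whole space; it is precisely the damping term $+u$ that supplies the extra $\|u\|_{L^{2}}^{2}$ needed to upgrade to the full $\|u\|_{H^{1}}^{2}$ and to make the key bound $\|\sqrt{\rho}u\|_{L^{2}}\lesssim\|u\|_{H^{1}}$ available under only $\rho_{0}\in L^{1}\cap L^{\infty}$, rather than forcing the stronger fractional dissipation used when $n\ge3$.
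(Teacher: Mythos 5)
Your proposal is correct and follows essentially the same route as the paper: the transport argument for $\|\rho(t)\|_{L^{1}\cap L^{\infty}}$, the energy identity $\frac{1}{2}\frac{d}{dt}\|\sqrt{\rho}u\|_{L^{2}}^{2}+\|u\|_{H^{1}}^{2}=0$, the key bound $\|\sqrt{\rho}u\|_{L^{2}}\leq C\|\rho_{0}\|_{L^{1}\cap L^{\infty}}^{1/2}\|u\|_{H^{1}}$ (your choice $\|\sqrt{\rho}\|_{L^{4}}\|u\|_{L^{4}}$ is just the paper's H\"older splitting with $r=2$), and Gronwall. No gaps.
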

\begin{proof}
The first part of the estimate (\ref{r58t001}) and the non-negativeness of $\rho$ can be deduced as in Lemma \ref{INDL221}.
To show the second part of (\ref{r58t001}), we multiply equation $\rm (\ref{SINDNSE})_{2}$ by $u$ and integrate the resulting equation over $\mathbb{R}^{2}$ to get
\begin{eqnarray}
 \frac{1}{2}\frac{d}{dt} \|\sqrt{\rho}u(t)\|_{L^{2}}^{2} +\|u\|_{H^{1}}^{2}=0.\nonumber
\end{eqnarray}
Fixing $r\in (1,\,\infty)$, we see that
\begin{eqnarray}
\|\sqrt{\rho}u\|_{L^{2}}\leq C\|\sqrt{\rho}\|_{L^{2r}}\|u\|_{L^{\frac{2r}{r-1}}} \leq C\|\rho_{0}\|_{L^{1}\cap L^{\infty}}^{\frac{1}{2}}\|u\|_{H^{1}}\leq C\|u\|_{H^{1}},\nonumber
\end{eqnarray}
which is crucial for the exponential decay estimate, but different from \eqref{key1}.
It follows that
\begin{eqnarray}
 \frac{d}{dt} \|\sqrt{\rho}u(t)\|_{L^{2}}^{2} +\gamma\|\sqrt{\rho}u(t)\|_{L^{2}}^{2} +\|u\|_{H^{1}}^{2}=0.\nonumber
\end{eqnarray}
By the Gronwall inequality, one can prove
$$e^{\gamma t}\|\sqrt{\rho}u(t)\|_{L^{2}}^{2} + \int_{0}^{t}{e^{\gamma \tau} \|u(\tau)\|_{H^{1}}^{2}\,d\tau}\leq \|\sqrt{\rho_{0}}u_{0}\|_{L^{2}}^{2}.$$
This completes  the proof of Lemma \ref{SINDL31}.
\end{proof}

\begin{lemma}\label{SINDL32}
Under the assumptions of Theorem \ref{Th2}, the   solution $(\rho,u)$
of the system (\ref{SINDNSE}) admits the following bound for any $t\geq0$,
\begin{eqnarray} \label{r58t003}
e^{\gamma t}\|u(t)\|_{H^{1}}^{2} +\int_{0}^{t}{e^{\gamma \tau}(\|u(\tau)\|_{H^{2}}^{2}+\|\sqrt{\rho}
\partial_{\tau}u(\tau)\|_{L^{2}}^{2}+\|\sqrt{\rho}
\dot{u}(\tau)\|_{L^{2}}^{2})\,d\tau}\leq \widetilde{C_{1}},
\end{eqnarray}
where $\dot{u}:=\partial_{t}u+u\cdot\nabla u$ is the material derivatives of the velocity $u$, and $\widetilde{C_{1}}$ depends only on $\|\rho_{0}\|_{L^{1}}$, $\|\rho_{0}\|_{L^{\infty}}$, $\|\sqrt{\rho_{0}}u_{0}\|_{L^{2}}$ and $\| u_{0}\|_{H^{1}}$.
\end{lemma}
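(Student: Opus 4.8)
The plan is to follow the scheme of Lemma~\ref{INDL222}, now in the presence of the damping term. Rewriting $(\ref{SINDNSE})_{2}$ as $\rho\dot u-\Delta u+u+\nabla p=0$ and multiplying by $\partial_tu$, the pressure term drops out (since $\nabla\cdot\partial_tu=0$), and integration by parts together with $\nabla\cdot u=0$, $\nabla\cdot u_0=0$ — the damping supplying the $\|u\|_{L^2}^2$ part of $\|u\|_{H^1}^2$ — gives
\begin{equation*}
\frac12\frac{d}{dt}\|u(t)\|_{H^1}^2+\|\sqrt\rho\,\partial_tu\|_{L^2}^2=-\int_{\mathbb{R}^{2}}\rho\,u\cdot\nabla u\cdot\partial_tu\,dx .
\end{equation*}
To control the velocity at second order I would invoke elliptic regularity for the Stokes-type operator $-\Delta+I$ with the divergence constraint: applying the Leray projection $\mathbb{P}$ to $(\ref{SINDNSE})_{2}$ gives $(I-\Delta)u=-\mathbb{P}(\rho\dot u)$, whence $\|u\|_{H^2}\le C\|\rho\dot u\|_{L^2}\le C\|\rho_0\|_{L^\infty}^{1/2}\|\sqrt\rho\,\dot u\|_{L^2}$.

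The delicate point is the estimate of $\int_{\mathbb{R}^{2}}\rho\,u\cdot\nabla u\cdot\partial_tu\,dx$, where — unlike in the higher-dimensional case — one must retain the exponentially decaying factor $\|\sqrt\rho\,u\|_{L^2}$ rather than simply bounding $\sqrt\rho$ by $\|\rho_0\|_{L^\infty}^{1/2}$. Writing the integrand as $(\sqrt\rho\,u)\cdot\nabla u\cdot(\sqrt\rho\,\partial_tu)$, using H\"older's inequality together with the two-dimensional Gagliardo--Nirenberg/Ladyzhenskaya inequalities — so that a norm $\|\sqrt\rho\,u\|_{L^{q}}$ with $q<\infty$ is interpolated between $\|\sqrt\rho\,u\|_{L^2}$ and $C\|\rho_0\|_{L^1\cap L^\infty}^{1/2}\|u\|_{H^1}$, and $\|\nabla u\|_{L^{q'}}$ between $\|\nabla u\|_{L^2}$ and $\|\nabla^2u\|_{L^2}$ — and then converting $\|\nabla^2u\|_{L^2}$ into $\|\sqrt\rho\,\partial_tu\|_{L^2}$ by the elliptic estimate above and absorbing into the left-hand side a small multiple of $\|\sqrt\rho\,\partial_tu\|_{L^2}^2$ (equivalently of $\|u\|_{H^2}^2$), one is led to a differential inequality of the shape
\begin{equation*}
\frac{d}{dt}\|u(t)\|_{H^1}^2+c\,\|\sqrt\rho\,\partial_tu\|_{L^2}^2+c\,\|u(t)\|_{H^2}^2\le D(t)\,\|u(t)\|_{H^1}^2 ,
\end{equation*}
in which $D(t)$ is bounded by $C\|\sqrt\rho\,u(t)\|_{L^2}^{\sigma}\|\nabla u(t)\|_{L^2}^2$ for some $\sigma>0$.

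It then remains to run a Gronwall argument, and the point is that $D$ is integrable on $[0,\infty)$ with no smallness hypothesis: by Lemma~\ref{SINDL31}, $\|\sqrt\rho\,u(t)\|_{L^2}^2\le\|\sqrt{\rho_0}u_0\|_{L^2}^2e^{-\gamma t}$ decays exponentially while $\int_0^\infty\|\nabla u(\tau)\|_{L^2}^2\,d\tau\le\int_0^\infty\|u(\tau)\|_{H^1}^2\,d\tau\le\|\sqrt{\rho_0}u_0\|_{L^2}^2$, so $\int_0^\infty D(\tau)\,d\tau<\infty$. Gronwall's inequality then gives the uniform bound of $\|u\|_{H^1}^2$ and of $\int_0^t(\|\sqrt\rho\,\partial_\tau u\|_{L^2}^2+\|u\|_{H^2}^2)\,d\tau$; repeating the computation after multiplying through by $e^{\gamma t}$, with $\gamma$ chosen small enough that $\gamma e^{\gamma t}\|u\|_{H^1}^2$ is absorbed by $c\,e^{\gamma t}\|u\|_{H^2}^2$ — exactly as in the passage from \eqref{qfgtds3c} to \eqref{qfgtds3c11} — yields $e^{\gamma t}\|u\|_{H^1}^2+\int_0^t e^{\gamma\tau}\big(\|u\|_{H^2}^2+\|\sqrt\rho\,\partial_\tau u\|_{L^2}^2\big)\,d\tau\le\widetilde{C_1}$. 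The remaining $\|\sqrt\rho\,\dot u\|_{L^2}$ term of \eqref{r58t003} follows from $\dot u=\partial_tu+u\cdot\nabla u$ and the same convective bound, whose $e^{\gamma\tau}$-weighted time integral is controlled by Cauchy--Schwarz and the estimates just obtained.

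I expect the main obstacle to be exactly this last step, on account of the criticality of the two-dimensional Sobolev embedding: since $\dot{H}^{1}(\mathbb{R}^{2})$ fails to embed into $L^\infty$, the convective term cannot be dominated by the $H^1$-dissipation alone, which forces the detour through the second-order norm $\|u\|_{H^2}$; and in order that the resulting coefficient $D(t)$ be integrable in time for \emph{arbitrarily large} data it is essential both to keep the decaying weight $\|\sqrt\rho\,u\|_{L^2}$ in the convective estimate and to have the full $\|u\|_{H^1}^2$ — not merely $\|\nabla u\|_{L^2}^2$ — in the dissipation of the basic energy, which is precisely what the damping term $u$ in \eqref{SINDNSE} provides. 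This dependence on the damping is also the reason the rate $\gamma$ must be taken small, relative to the dissipation constant $c$.
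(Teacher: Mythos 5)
There is a genuine gap at the heart of your argument: the claimed differential inequality with $D(t)\le C\|\sqrt\rho\,u\|_{L^2}^{\sigma}\|\nabla u\|_{L^2}^{2}$ cannot be produced by the H\"older/Gagliardo--Nirenberg manipulations you describe, precisely because of the 2D criticality you yourself point out. Testing with $\partial_t u$ leaves the trilinear term $\int\rho\,u\cdot\nabla u\cdot\partial_tu\,dx$, and since $H^1(\mathbb{R}^2)\not\hookrightarrow L^\infty$, every interpolation of $\|\sqrt\rho\,u\|_{L^{q}}$ ($q<\infty$) between $\|\sqrt\rho\,u\|_{L^2}$ and $C\|u\|_{H^1}$ has exponent $\theta<\tfrac12$ on the decaying factor, so after absorbing $\|\sqrt\rho\,\partial_tu\|_{L^2}$ and $\|u\|_{H^2}$ (via the elliptic estimate) one is left with a right-hand side of the form $C\|\sqrt\rho\,u\|_{L^2}^{4\theta}\|u\|_{H^1}^{4(1-\theta)}\|\nabla u\|_{L^2}^{2}$, i.e.\ $D(t)$ necessarily carries an extra factor $\|u\|_{H^1}^{2-4\theta}$ with a strictly positive exponent. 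Writing $X=\|u\|_{H^1}^2$, the resulting inequality is $X'\le C\,g(t)\,X^{\,2-2\theta}$ with $g$ integrable but $2-2\theta>1$: this is a supercritical Riccati-type inequality, and the comparison argument ($\frac{d}{dt}X^{-(1-2\theta)}\ge -C g$) only yields a global bound under a smallness condition on the data relative to $\int_0^\infty g$. The exponential decay of $\|\sqrt\rho\,u\|_{L^2}$ does not repair this, since it only improves the integrable factor $g$, not the supercritical power of the unknown; so your Gronwall step does not close for large data, which is exactly the point of the lemma.

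The paper avoids this by never estimating the convective term against $\partial_t u$ at all: it multiplies $\rho\dot u=\Delta u-u-\nabla p$ by the material derivative $\dot u$, uses the two-dimensional, divergence-free identities $\int_{\mathbb{R}^2}(u\cdot\nabla u)\cdot\Delta u\,dx=0$ and $\int_{\mathbb{R}^2}(u\cdot\nabla u)\cdot u\,dx=0$ so that the only nonlinear contribution is the pressure term, and bounds that term by the div-curl/Hardy--BMO duality,
\begin{equation*}
-\int_{\mathbb{R}^2}\dot u\cdot\nabla p\,dx=\int_{\mathbb{R}^2}p\,\partial_j u_i\,\partial_i u_j\,dx\le C\|p\|_{\mathrm{BMO}}\|\partial_j u\cdot\nabla u_j\|_{\mathcal H^1}\le C\|\nabla p\|_{L^2}\|\nabla u\|_{L^2}^2\le C\|\sqrt\rho\,\dot u\|_{L^2}\|\nabla u\|_{L^2}^2,
\end{equation*}
using $\nabla p=(-\Delta)^{-1}\nabla\nabla\cdot(\rho\dot u)$. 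This gives the critical inequality $\frac{d}{dt}\|u\|_{H^1}^2+\|\sqrt\rho\,\dot u\|_{L^2}^2\le C\|u\|_{H^1}^4=C\|u\|_{H^1}^2\cdot\|u\|_{H^1}^2$, which closes by Gronwall for arbitrary data because $\int_0^\infty\|u\|_{H^1}^2\,d\tau\le\|\sqrt{\rho_0}u_0\|_{L^2}^2$; the terms $\|u\|_{H^2}$ and $\|\sqrt\rho\,\partial_t u\|_{L^2}$ in \eqref{r58t003} are then recovered a posteriori from the Stokes regularity $\|u\|_{H^2}\le C\|\rho\dot u\|_{L^2}$ and $\|\sqrt\rho\,\partial_t u\|\le\|\sqrt\rho\,\dot u\|+\|\sqrt\rho\,u\cdot\nabla u\|$. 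If you want to keep your $\partial_t u$-testing scheme you would need an additional ingredient (e.g.\ a Brezis--Gallouet/Desjardins-type logarithmic interpolation) to beat the criticality; as written, the step producing $D(t)$ is the one that fails. A smaller point: your closing remark that retaining the decaying weight $\|\sqrt\rho\,u\|_{L^2}$ is ``essential'' is not borne out by the paper's proof, which uses no such weight at this stage -- the damping enters only through the full $H^1$ dissipation in the basic energy estimate and the bound $\|\sqrt\rho\,u\|_{L^2}\le C\|u\|_{H^1}$ that drives the exponential weight $e^{\gamma t}$.
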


\begin{proof}
We first rewrite the equation $\rm (\ref{SINDNSE})_{2}$ as
\begin{eqnarray} \label{r58t004}
\rho \dot{u}=\Delta u-u-\nabla p.
\end{eqnarray}
Multiplying the equation $\rm (\ref{r58t004})$ by $\dot{u}$ and integrating it over $\mathbb{R}^{2}$   lead  to
\begin{eqnarray}\label{r58t005}
 \|\sqrt{\rho}\dot{u}\|_{L^{2}}^{2}=\int_{\mathbb{R}^{2}} \dot{u}\cdot\Delta u\,dx-\int_{\mathbb{R}^{2}} \dot{u}\cdot u\,dx-\int_{\mathbb{R}^{2}} \dot{u}\cdot\nabla p\,dx.
\end{eqnarray}
On the one hand, one has
\begin{eqnarray}
\int_{\mathbb{R}^{2}} \dot{u}\cdot\Delta u\,dx=\int_{\mathbb{R}^{2}} \partial_{t}u\cdot\Delta u\,dx+\int_{\mathbb{R}^{2}} (u\cdot\nabla u)\cdot\Delta u\,dx=-\frac{1}{2}\frac{d}{dt}\|\nabla{u}(t)\|_{L^{2}}^{2},\nonumber
\end{eqnarray}
where we have used the following fact due to $\nabla\cdot u=0$ (see \cite[(3.3)]{Wuxye} for details):
$$\int_{\mathbb{R}^{2}} (u\cdot\nabla u)\cdot\Delta u\,dx=0.$$
On the other hand, we have
$$-\int_{\mathbb{R}^{2}} \dot{u}\cdot u\,dx=-\int_{\mathbb{R}^{2}} \partial_{t}u\cdot  u\,dx-\int_{\mathbb{R}^{2}} (u\cdot\nabla u)\cdot u\,dx=-\frac{1}{2}\frac{d}{dt}\|u(t)\|_{L^{2}}^{2}.$$
Due to \cite[(3.8)]{LsZhong}, the last term in \eqref{r58t005} can be bounded by
\begin{eqnarray}\label{sdfrt001}
-\int_{\mathbb{R}^{2}} \dot{u}\cdot\nabla p\,dx
=\int_{\mathbb{R}^{2}} \partial_{j}u_{i}\partial_{i}u_{j} p\,dx
\leq C\|p\|_{\rm{BMO}}\|\partial_{j}u\cdot \nabla u_{j}\|_{\mathcal{H}^{1}}
\leq C\|\nabla p\|_{L^{2}}\|\nabla u\|_{L^{2}}^{2}.
\end{eqnarray}
We rewrite (\ref{r58t004}) as the Stokes system
\begin{equation}\label{r58t006}
\left\{\begin{array}{l}
-\Delta u+u+\nabla p =-\rho\dot{u},
             \vspace{2mm}\\
\nabla\cdot u=0.
\end{array}\right.
\end{equation}
Then, it gives
\begin{equation}\label{addr58t006}
\nabla p= (-\Delta)^{-1}\nabla\nabla\cdot(\rho\dot{u}),
\end{equation}
which yields
\begin{equation}\label{sdfrt002}\|\nabla p\|_{L^{2}}\leq C\|\rho\dot{u}\|_{L^{2}}\leq C\|\sqrt{\rho}\dot{u}\|_{L^{2}}.
\end{equation}
Combining all the above estimates implies that
\begin{eqnarray}
 \frac{d}{dt} \|u(t)\|_{H^{1}}^{2} +\|\sqrt{\rho}\dot{u}\|_{L^{2}}^{2}\leq C \|u(t)\|_{H^{1}}^{4}.\nonumber
\end{eqnarray}
This allows us to show
$$
 \frac{d}{dt} (e^{\gamma t}\|u(t)\|_{H^{1}}^{2}) +e^{\gamma t}\|\sqrt{\rho}\dot{u}(t)\|_{L^{2}}^{2}\leq \gamma e^{\gamma t}\|u(t)\|_{H^{1}}^{2}+C\|u(t)\|_{H^{1}}^{2}(e^{\gamma t} \|u(t)\|_{H^{1}}^{2}).$$
By the estimate (\ref{r58t001}) and the Gronwall inequality, we get
\begin{eqnarray*}
e^{\gamma t}\|u(t)\|_{H^{1}}^{2} +\int_{0}^{t}e^{\gamma \tau}\|\sqrt{\rho}
\dot{u}(\tau)\|_{L^{2}}^{2}\,d\tau\leq \widetilde{C_{1}}.\end{eqnarray*}
It follows from the regularity properties of Stokes system (\ref{r58t006}) that
\begin{eqnarray*}
 \int_{0}^{t}e^{\gamma \tau}\|u(\tau)\|_{H^{2}}^{2}\,d\tau\leq \int_{0}^{t}e^{\gamma \tau}\|\rho\dot{u}(\tau)\|_{L^{2}}^{2}\,d\tau\leq\int_{0}^{t}e^{\gamma \tau}\|\sqrt{\rho}\dot{u}(\tau)\|_{L^{2}}^{2}\,d\tau\leq\widetilde{C_{1}}.
\end{eqnarray*}
We can also verify,  by  \eqref{r58t001} for $\rho$ and $H^s(\R^2)\hookrightarrow L^\infty(\R^2)$ with $s>1$ for $u$,
\begin{align}\label{fgyt653n}
 \int_{0}^{t}e^{\gamma \tau}\|\sqrt{\rho}
\partial_\tau u(\tau)\|_{L^{2}}^{2}\,d\tau&\leq  \int_{0}^{t}e^{\gamma \tau}(\|\sqrt{\rho}\dot{u}\|_{L^{2}}
+\|\sqrt{\rho} u\cdot\nabla u\|_{L^{2}})\,d\tau\nonumber\\
&\leq C\int_{0}^{t}e^{\gamma \tau}( \|\sqrt{\rho}\dot{u}\|_{L^{2}}
+\|\sqrt{\rho}\|_{L^{\infty}}\|u\|_{L^{\infty}}\|\nabla u\|_{L^{2}})\,d\tau
\nonumber\\
&\leq
C \int_{0}^{t}e^{\gamma \tau}(\|\sqrt{\rho}\dot{u}\|_{L^{2}}
+\|u\|_{H^{2}}^{2})\,d\tau\leq \widetilde{C_{1}}.
\end{align}
We thus complete the proof of the lemma.
\end{proof}

\vskip .1in

\begin{lemma}\label{SINDL33}
Under the assumptions of Theorem \ref{Th2}, the   solution $(\rho,u)$
of the system (\ref{SINDNSE}) admits the following bound for any $t\geq0$,
\begin{eqnarray} \label{rxcr5yn0}
t\|\nabla p(t)\|_{L^{2}}^{2}+t\|\sqrt{\rho}\dot{u}(t)\|_{L^{2}}^{2}+\int_{0}^{t}{\tau\|\dot{u}(\tau)\|_{H^{1}}^{2}\,d\tau}\leq \widetilde{C_{1}},
\end{eqnarray}
moreover, for any $t_{0}>0$ and any $t\geq t_{0}$, the following holds true
\begin{align} \label{r58t010}
e^{\gamma t}\|\sqrt{\rho}\dot{u}(t)\|_{L^{2}}^{2}+e^{\gamma t}\|\sqrt{\rho}
\partial_{t}u(t)\|_{L^{2}}^{2}&+ e^{\gamma t}\|u(t)\|_{H^{2}}^{2}+e^{\gamma t}\|\nabla p(t)\|_{L^{2}}^{2}+\int_{t_{0}}^{t}{e^{\gamma \tau}\|\dot{u}(\tau)\|_{H^{1}}^{2}\,d\tau}\nonumber\\&\quad+
\int_{t_{0}}^{t}{e^{\gamma \tau}\|\partial_{t}u(\tau)\|_{H^{1}}^{2}\,d\tau}\leq \frac{e^{\gamma t_{0}}}{t_{0}}\widetilde{C_{1}}:=C_{t_{0}},
\end{align}
where $\widetilde{C_{1}}$ depends only on $\|\rho_{0}\|_{L^{1}}$, $\|\rho_{0}\|_{L^{\infty}}$, $\|\sqrt{\rho_{0}}u_{0}\|_{L^{2}}$ and $\| u_{0}\|_{H^{1}}$.
\end{lemma}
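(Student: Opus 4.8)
\emph{Proof sketch.} The plan is to reproduce, in the damped two-dimensional setting, the chain of time-weighted and exponentially-weighted estimates of Lemmas \ref{INDL223} and \ref{INDL226}, using the material derivative $\dot u=\partial_tu+u\cdot\nabla u$ as the basic unknown. Starting from the rewritten momentum equation $\rho\dot u=\Delta u-u-\nabla p$ (that is, (\ref{r58t004})), I would apply the operator $\partial_t+u\cdot\nabla$ to it and test with $\dot u$. Since $\nabla\cdot u=0$ (so that $\partial_t\rho+\mathrm{div}(\rho u)=0$ gives $\frac{d}{dt}\int_{\mathbb{R}^2}\rho|\dot u|^2\,dx=2\int_{\mathbb{R}^2}\dot u\cdot(\partial_t+u\cdot\nabla)(\rho\dot u)\,dx$), one obtains
\[
\frac{d}{dt}\|\sqrt\rho\dot u\|_{L^2}^2=2\int_{\mathbb{R}^2}\dot u\cdot(\partial_t+u\cdot\nabla)(\Delta u-u-\nabla p)\,dx.
\]
The contribution of $-u$ equals $-2\|\dot u\|_{L^2}^2$ and that of $\Delta u$ equals $-2\|\nabla\dot u\|_{L^2}^2$ plus the commutator $\int_{\mathbb{R}^2}\dot u\cdot[u\cdot\nabla\Delta u-\Delta(u\cdot\nabla u)]\,dx$, so the left-hand side carries the full dissipation $\|\dot u\|_{H^1}^2$. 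Here the damping term is essential: in $\mathbb{R}^2$ one cannot recover $\|\dot u\|_{L^2}$ from $\|\sqrt\rho\dot u\|_{L^2}$ because of the vacuum and the failure of a Poincar\'e inequality, in agreement with the remark following Theorem \ref{Th2}.

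For the commutator, one expands $u\cdot\nabla\Delta u-\Delta(u\cdot\nabla u)=-(\Delta u^k)\partial_ku-2\partial_ju^k\partial_j\partial_ku$, integrates by parts once, and bounds it by $C\|\nabla\dot u\|_{L^2}(\|u\|_{H^2}\|\nabla u\|_{L^4}+\|\nabla u\|_{L^4}^2)$; absorbing $\epsilon\|\nabla\dot u\|_{L^2}^2$ and using the Stokes estimate $\|u\|_{H^2}\le C\|\rho\dot u\|_{L^2}\le C\|\sqrt\rho\dot u\|_{L^2}$ from (\ref{r58t006}), the remainder takes the form $G(t)\|\sqrt\rho\dot u\|_{L^2}^2$ with $G\sim\|\nabla u\|_{L^4}^2+\|u\|_{H^1}^2$. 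Since $\|\nabla u\|_{L^4}^2\le C\|u\|_{H^1}\|u\|_{H^2}\le C\|u\|_{H^1}\|\sqrt\rho\dot u\|_{L^2}$, the bounds $\int_0^te^{\gamma\tau}(\|u\|_{H^1}^2+\|u\|_{H^2}^2+\|\sqrt\rho\dot u\|_{L^2}^2)\,d\tau\le\widetilde{C_1}$ of Lemmas \ref{SINDL31}--\ref{SINDL32}, together with the Cauchy--Schwarz inequality in time and the elementary fact $\sup_{\tau\ge0}\tau e^{-\gamma\tau}<\infty$, give $\int_0^t(\tau+e^{\gamma\tau})G(\tau)\,d\tau\le\widetilde{C_1}$. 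The pressure terms are handled as the term $H_2$ in Lemma \ref{INDL226}: writing $(\partial_t+u\cdot\nabla)\nabla p=\nabla(p_t+u\cdot\nabla p)-\nabla u^k\,\partial_kp$, the factor $p_t$ is moved into a total time derivative $-\frac{d}{dt}\int_{\mathbb{R}^2}\partial_iu^j\partial_ju^i\,p\,dx$; using $\partial_iu^j\partial_ju^i=\mathrm{div}(u\cdot\nabla u)$, the bound (\ref{sdfrt002}), and $H^s(\mathbb{R}^2)\hookrightarrow L^\infty(\mathbb{R}^2)$ for $s>1$, this boundary term $\psi(t)$ satisfies $|\psi(t)|\le\frac12\|\sqrt\rho\dot u(t)\|_{L^2}^2+g_0(t)$ with $g_0$ integrable against $e^{\gamma\tau}$, while the remaining volume terms are again of the form $G(t)\|\sqrt\rho\dot u\|_{L^2}^2+g(t)$ with $g$ integrable against $e^{\gamma\tau}$. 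Collecting everything yields
\[
\frac{d}{dt}\big(\|\sqrt\rho\dot u\|_{L^2}^2+\psi(t)\big)+\|\dot u\|_{H^1}^2\le G(t)\|\sqrt\rho\dot u\|_{L^2}^2+g(t).
\]

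From this differential inequality I would first multiply by $t$, use $\int_0^t\|\sqrt\rho\dot u\|_{L^2}^2\,d\tau\le\widetilde{C_1}$ (from (\ref{r58t003})) and the Gronwall inequality to obtain $t\|\sqrt\rho\dot u(t)\|_{L^2}^2+\int_0^t\tau\|\dot u(\tau)\|_{H^1}^2\,d\tau\le\widetilde{C_1}$; combined with $\|\nabla p\|_{L^2}\le C\|\sqrt\rho\dot u\|_{L^2}$ this is (\ref{rxcr5yn0}). Next, multiplying the same inequality by $e^{\gamma t}$, integrating over $[t_0,t]$, and using the just-obtained bound $\|\sqrt\rho\dot u(t_0)\|_{L^2}^2\le\widetilde{C_1}/t_0$ as the starting datum, the Gronwall inequality produces $e^{\gamma t}\|\sqrt\rho\dot u(t)\|_{L^2}^2+\int_{t_0}^te^{\gamma\tau}\|\dot u(\tau)\|_{H^1}^2\,d\tau\le C_{t_0}$. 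The remaining quantities in (\ref{r58t010}) then follow: $\|u\|_{H^2}+\|\nabla p\|_{L^2}\le C\|\sqrt\rho\dot u\|_{L^2}$ from (\ref{r58t006}); $\|\sqrt\rho\partial_tu\|_{L^2}\le\|\sqrt\rho\dot u\|_{L^2}+C\|\sqrt\rho\|_{L^\infty}\|u\|_{L^\infty}\|\nabla u\|_{L^2}\le C(\|\sqrt\rho\dot u\|_{L^2}+\|u\|_{H^2}^2)$; and $\|\partial_tu\|_{H^1}\le\|\dot u\|_{H^1}+C\|u\cdot\nabla u\|_{H^1}\le\|\dot u\|_{H^1}+C\|u\|_{H^2}^2$, so that the exponential decay of $\|\sqrt\rho\dot u\|_{L^2}$ and of $\|u\|_{H^2}$ already established gives the asserted pointwise decay and the integral bounds on $\partial_tu$.

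The step I expect to be the main obstacle is the nonlinear bookkeeping sketched in the second paragraph. Because the Sobolev embedding is critical in two dimensions, every commutator and pressure term must be split so that its highest-order factor is either absorbed into $\|\dot u\|_{H^1}^2$ or converted, via the Stokes regularity $\|u\|_{H^2}\le C\|\sqrt\rho\dot u\|_{L^2}$, into the coefficient multiplying $\|\sqrt\rho\dot u\|_{L^2}^2$; the surviving factor must then be shown to be integrable in time against both $\tau$ and $e^{\gamma\tau}$, which relies crucially on having the \emph{exponentially weighted} integral estimates of Lemmas \ref{SINDL31}--\ref{SINDL32} rather than merely finite-time ones, and on the damping term, which alone makes the $L^2$ part of the dissipation $\|\dot u\|_{H^1}^2$ available for these absorptions.
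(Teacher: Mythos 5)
Your proposal is correct and follows essentially the same route as the paper: a material-derivative energy estimate with the pressure boundary term $\varphi(t)=-\int_{\mathbb{R}^2} p\,\partial_j u_i\partial_i u_j\,dx$, then multiplication by $t$ and Gronwall to get (\ref{rxcr5yn0}), then multiplication by $e^{\gamma t}$ on $[t_0,t]$ with the starting datum $\|\sqrt{\rho}\dot u(t_0)\|_{L^2}^2\le \widetilde{C_1}/t_0$ supplied by the time-weighted bound, and finally the Stokes regularity $\|u\|_{H^2}+\|\nabla p\|_{L^2}\le C\|\sqrt{\rho}\dot u\|_{L^2}$ together with $\|u\cdot\nabla u\|_{H^1}\le C\|u\|_{H^2}^2$ for the remaining quantities in (\ref{r58t010}). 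The only real difference is that the paper imports the basic differential inequality from \cite{LsZhong} and closes its right-hand side in one line as $C\|\sqrt{\rho}\dot u\|_{L^2}^4$ via $\|\nabla u\|_{L^4}^4+\|p\|_{L^4}^4\le C\|\rho\dot u\|_{L^{4/3}}^4\le C\|\rho\|_{L^2}^2\|\sqrt{\rho}\dot u\|_{L^2}^4$ (exploiting $\rho_0\in L^1\cap L^\infty$), so the commutator/pressure bookkeeping you flag as the main obstacle is dispatched at once, whereas your more laborious splitting into $G(t)\|\sqrt{\rho}\dot u\|_{L^2}^2+g(t)$ with $G,g$ integrable against $\tau$ and $e^{\gamma\tau}$ also works.
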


\begin{proof}
According to the proof of \cite[Lemma 3.3]{LsZhong}, we have
\begin{eqnarray*}
 \frac{d}{dt}\left(\|\sqrt{\rho}\dot{u}(t)\|_{L^{2}}^{2}+\varphi(t)\right)
 +\|\dot{u}(t)\|_{H^{1}}^{2}
 \leq C(\|\nabla u\|_{L^{4}}^{4}+\|p\|_{L^{4}}^{4}),
\end{eqnarray*}
where
$\varphi(t):=-\int_{\mathbb{R}^{2}}p\partial_{j}u_{i}\partial_{i}u_{j}\,dx.$
The following estimate is an easy consequence of (\ref{sdfrt001}) and  (\ref{sdfrt002})
\begin{eqnarray}\label{xdedg68j}
|\varphi(t)|\leq C\|\sqrt{\rho}\dot{u}\|_{L^{2}}\|\nabla u\|_{L^{2}}^{2}\leq \frac{1}{2}\|\sqrt{\rho}\dot{u}\|_{L^{2}}^{2}+C\|\nabla u\|_{L^{2}}^{4}.
\end{eqnarray}
According to (\ref{r58t006}) and (\ref{addr58t006}), we have
\begin{eqnarray}\label{r58t013}
u=-(\mathbb{I}-\Delta)^{-1}\left(\rho\dot{u}+(-\Delta)^{-1}\nabla\nabla\cdot(\rho\dot{u})\right),
\end{eqnarray}
where $\mathbb{I}$ is an identity operator. Therefore, one concludes
$$\|\nabla u\|_{L^{4}}^{4}+\|p\|_{L^{4}}^{4}\leq C(\|\Delta u\|_{L^{\frac{4}{3}}}^{4}+\|\nabla p\|_{L^{\frac{4}{3}}}^{4})\leq C\|\rho\dot{u}\|_{L^{\frac{4}{3}}}^{4}\leq C\|\rho\|_{L^{2}}^{2}\|\sqrt{\rho}\dot{u}\|_{L^{2}}^{4},$$
which along with (\ref{r58t013}) gives
\begin{eqnarray}\label{r58t014}
 \frac{d}{dt}\left(\|\sqrt{\rho}\dot{u}(t)\|_{L^{2}}^{2}+\varphi(t)\right)
 +\|\dot{u}(t)\|_{H^{1}}^{2}
 \leq C\|\sqrt{\rho}\dot{u}\|_{L^{2}}^{4},
\end{eqnarray}
which then  implies
\begin{eqnarray}\label{xdfeswq12}
 \frac{d}{dt}\left(t\|\sqrt{\rho}\dot{u}(t)\|_{L^{2}}^{2}+t\varphi(t)\right)
 +t\|\dot{u}(t)\|_{H^{1}}^{2}
 \leq\|\sqrt{\rho}\dot{u}(t)\|_{L^{2}}^{2}+ Ct\|\sqrt{\rho}\dot{u}\|_{L^{2}}^{4}.
\end{eqnarray}
By (\ref{xdedg68j}) and the Gronwall inequality, one has
\begin{eqnarray}\label{polkq232}
t\|\sqrt{\rho}\dot{u}(t)\|_{L^{2}}^{2}+\int_{0}^{t}{\tau\|\dot{u}(\tau)\|_{H^{1}}^{2}\,d\tau}\leq \widetilde{C_{1}}.\end{eqnarray}
Multiplying (\ref{r58t014}) by $e^{\gamma t}$ yields
\begin{align}\label{r58t018}
\frac{d}{dt}\left(e^{\gamma t}\|\sqrt{\rho}\dot{u}(t)\|_{L^{2}}^{2}+e^{\gamma t}\varphi(t)\right) +e^{\gamma t}\|\dot{u}(t)\|_{H^{1}}^{2}&\leq  \gamma e^{\gamma t}\|\sqrt{\rho}\dot{u}(t)
 \|_{L^{2}}^{2}+\gamma e^{\gamma t}\varphi(t)\nonumber\\&\quad+Ce^{\gamma t}\|\sqrt{\rho}\dot{u}\|_{L^{2}}^{4}.
\end{align}
We thus have by integrating (\ref{r58t018}) in time and using (\ref{r58t003}), (\ref{xdedg68j}) as well as (\ref{polkq232}),
\begin{align} &
e^{\gamma t}\|\sqrt{\rho}\dot{u}(t)\|_{L^{2}}^{2}+\int_{t_{0}}^{t}e^{\gamma \tau}\|\dot{u}(\tau)\|_{H^{1}}^{2}\,d\tau\nonumber\\&\leq  C_{t_{0}}+e^{\gamma t}|\varphi(t)|+\gamma \int_{t_{0}}^{t}e^{\gamma \tau}\|\sqrt{\rho}\dot{u}(\tau)\|_{L^{2}}^{2}\,d\tau+\gamma \int_{t_{0}}^{t}e^{\gamma \tau}\varphi(\tau)\,d\tau +\int_{t_{0}}^{t}e^{\gamma \tau}\|\sqrt{\rho}\dot{u}(\tau)\|_{L^{2}}^{4}\,d\tau
\nonumber\\&\leq  C_{t_{0}}+\frac{1}{2}e^{\gamma t}\|\sqrt{\rho}\dot{u}(t)\|_{L^{2}}^{2}+ C \int_{t_{0}}^{t}\|\sqrt{\rho}\dot{u}(\tau)\|_{L^{2}}^{2}(e^{\gamma \tau}\|\sqrt{\rho}\dot{u}(\tau)\|_{L^{2}}^{2})\,d\tau.\nonumber
\end{align}
This implies
\begin{eqnarray} &&
e^{\gamma t}\|\sqrt{\rho}\dot{u}(t)\|_{L^{2}}^{2}+\int_{t_{0}}^{t}e^{\gamma \tau}\|\dot{u}(\tau)\|_{H^{1}}^{2}\,d\tau\leq
C_{t_{0}}+ C\int_{t_{0}}^{t}\|\sqrt{\rho}\dot{u}(\tau)\|_{L^{2}}^{2}(e^{\gamma \tau}\|\sqrt{\rho}\dot{u}(\tau)\|_{L^{2}}^{2})\,d\tau.\nonumber
\end{eqnarray}
By means of (\ref{r58t003}) again and the Gronwall inequality, one obtains
\begin{eqnarray} &&
e^{\gamma t}\|\sqrt{\rho}\dot{u}(t)\|_{L^{2}}^{2}+\int_{t_{0}}^{t}e^{\gamma \tau}\|\dot{u}(\tau)\|_{H^{1}}^{2}\,d\tau\leq
C_{t_{0}}.\nonumber
\end{eqnarray}
Thanks to (\ref{sdfrt002}) and (\ref{r58t013}), we get
\begin{eqnarray} &&
e^{\gamma t}\|\nabla p(t)\|_{L^{2}}^{2}+e^{\gamma t}\|u(t)\|_{H^{2}}^{2}\leq Ce^{\gamma t}\|\sqrt{\rho}\dot{u}(t)\|_{L^{2}}^{2}\leq
C_{t_{0}}.\nonumber
\end{eqnarray}
The following estimate follows immediately from (\ref{fgyt653n})
\begin{eqnarray}
e^{\gamma t}\|\sqrt{\rho}\partial_{t}u(t)\|_{L^{2}}^{2}\leq e^{\gamma t}\|\sqrt{\rho}\dot{u}(t)\|_{L^{2}}^{2}+e^{\gamma t}\|u(t)\|_{H^{2}}^{2}
\leq
 C_{t_{0}}.\nonumber
\end{eqnarray}
Finally, we have
\begin{align}
\int_{t_{0}}^{t}{e^{\gamma \tau}\|\partial_{t}u(\tau)\|_{H^{1}}^{2}\,d\tau}&\leq \int_{t_{0}}^{t}{e^{\gamma \tau}\|\dot{u}(\tau)\|_{H^{1}}^{2}\,d\tau}+\int_{t_{0}}^{t}{e^{\gamma \tau}\|(u\cdot\nabla u)(\tau)\|_{H^{1}}^{2}\,d\tau}\nonumber\\
&\leq C\int_{t_{0}}^{t}{e^{\gamma \tau}\|\dot{u}(\tau)\|_{H^{1}}^{2}\,d\tau}+C\int_{t_{0}}^{t}{e^{\gamma \tau}\|u(\tau)\|_{H^{2}}^{4}\,d\tau}\leq C_{t_{0}},\nonumber
\end{align}
from the estimate
\begin{eqnarray}\label{adrtyew1}
\|u\cdot\nabla u\|_{H^{1}}\leq C\|u u\|_{H^{2}}\leq C\|u\|_{H^{2}}^{2}. \end{eqnarray}
Therefore, we complete the proof of the lemma.
\end{proof}

\begin{lemma}\label{SINDL34}
Under the assumptions of Theorem \ref{Th2}, the   solution $(\rho,u)$
of the system (\ref{SINDNSE}) admits the following bounds for any $t\geq0$,
\begin{eqnarray} \label{r58t021}
\int_{0}^{t}{\|\nabla u(\tau)
\|_{L^{\infty}} \,d\tau} \leq \widetilde{C_{1}},\qquad \|\nabla\rho(t)\|_{L^{q}} \leq \widetilde{C_{1}}\|\nabla\rho_{0}\|_{L^{q}},
\end{eqnarray}
where $\widetilde{C_{1}}$ depends only on $\|\rho_{0}\|_{L^{1}}$, $\|\rho_{0}\|_{L^{\infty}}$, $\|\sqrt{\rho_{0}}u_{0}\|_{L^{2}}$ and $\| u_{0}\|_{H^{1}}$.
\end{lemma}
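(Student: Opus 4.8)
The plan is to mirror the proof of Lemma \ref{INDL224}: derive a pointwise‑in‑time bound for $\|\nabla u\|_{L^{\infty}}$ from the elliptic regularity of the Stokes system (\ref{r58t006}), then integrate it over $[0,t]$ by splitting into $[0,1]\cup[1,t]$, using the time‑weighted estimate (\ref{rxcr5yn0}) near the origin and the exponential decay (\ref{r58t010}) afterwards, and finally transport this $L^{1}_{t}L^{\infty}_{x}$ control of $\nabla u$ into a bound on $\|\nabla\rho\|_{L^{q}}$ by a Gronwall argument on the density equation. For the first step, fix $m\in(2,\infty)$; taking $\mathrm{div}$ of (\ref{r58t006}) gives $\Delta p=\mathrm{div}(\rho\dot{u})$, hence $\|\nabla p\|_{L^{m}}\leq C\|\rho\dot{u}\|_{L^{m}}$ by the $L^{m}$‑boundedness of singular integrals, and then $(\mathbb{I}-\Delta)u=-\rho\dot{u}-\nabla p$ yields $\|u\|_{W^{2,m}}\leq C\|\rho\dot{u}\|_{L^{m}}$ (this is essentially (\ref{r58t013})). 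Since $W^{2,m}(\mathbb{R}^{2})\hookrightarrow W^{1,\infty}(\mathbb{R}^{2})$ for $m>2$, we get $\|\nabla u\|_{L^{\infty}}\leq C\|\rho\dot{u}\|_{L^{m}}$, and interpolating $\|\rho\dot{u}\|_{L^{m}}\leq C\|\rho\dot{u}\|_{L^{2}}^{1-\theta}\|\rho\dot{u}\|_{L^{p}}^{\theta}$ for some fixed large $p$ and some $\theta\in(0,1)$, together with $\|\rho\dot{u}\|_{L^{2}}\leq\|\rho_{0}\|_{L^{\infty}}^{1/2}\|\sqrt{\rho}\dot{u}\|_{L^{2}}$ and $\|\rho\dot{u}\|_{L^{p}}\leq\|\rho_{0}\|_{L^{\infty}}\|\dot{u}\|_{L^{p}}\leq C\|\dot{u}\|_{H^{1}}$ (the last by $H^{1}(\mathbb{R}^{2})\hookrightarrow L^{p}$), one obtains
$$\|\nabla u(t)\|_{L^{\infty}}\leq C\,\|\sqrt{\rho}\dot{u}(t)\|_{L^{2}}^{1-\theta}\,\|\dot{u}(t)\|_{H^{1}}^{\theta}.$$

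For the time integration, on $[0,1]$ the weighted pointwise bound $\tau\|\sqrt{\rho}\dot{u}\|_{L^{2}}^{2}\leq\widetilde{C_{1}}$ from (\ref{rxcr5yn0}) gives $\|\sqrt{\rho}\dot{u}\|_{L^{2}}^{1-\theta}\leq C\tau^{-(1-\theta)/2}$, so that, after the H\"{o}lder inequality in time,
$$\int_{0}^{1}\|\nabla u\|_{L^{\infty}}\,d\tau\leq C\int_{0}^{1}\tau^{-\frac12}\bigl(\tau^{\frac12}\|\dot{u}\|_{H^{1}}\bigr)^{\theta}\,d\tau\leq C\Bigl(\int_{0}^{1}\tau\|\dot{u}\|_{H^{1}}^{2}\,d\tau\Bigr)^{\frac{\theta}{2}}\Bigl(\int_{0}^{1}\tau^{-\frac{1}{2-\theta}}\,d\tau\Bigr)^{\frac{2-\theta}{2}}\leq\widetilde{C_{1}},$$
where the last integral is finite because $\theta<1$ and $\int_{0}^{1}\tau\|\dot{u}\|_{H^{1}}^{2}\,d\tau\leq\widetilde{C_{1}}$ by (\ref{rxcr5yn0}). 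On $[1,t]$ I would use (\ref{r58t010}) with $t_{0}=1$: from $e^{\gamma\tau}\|\sqrt{\rho}\dot{u}\|_{L^{2}}^{2}\leq\widetilde{C_{1}}$ one has $\|\sqrt{\rho}\dot{u}\|_{L^{2}}^{1-\theta}\leq Ce^{-\gamma(1-\theta)\tau/2}$, and the same H\"{o}lder splitting gives
$$\int_{1}^{t}\|\nabla u\|_{L^{\infty}}\,d\tau\leq C\int_{1}^{t}e^{-\frac{\gamma\tau}{2}}\bigl(e^{\frac{\gamma\tau}{2}}\|\dot{u}\|_{H^{1}}\bigr)^{\theta}\,d\tau\leq C\Bigl(\int_{1}^{t}e^{\gamma\tau}\|\dot{u}\|_{H^{1}}^{2}\,d\tau\Bigr)^{\frac{\theta}{2}}\Bigl(\int_{1}^{t}e^{-\frac{\gamma\tau}{2-\theta}}\,d\tau\Bigr)^{\frac{2-\theta}{2}}\leq\widetilde{C_{1}},$$
using $\int_{1}^{t}e^{\gamma\tau}\|\dot{u}\|_{H^{1}}^{2}\,d\tau\leq\widetilde{C_{1}}$ from (\ref{r58t010}); adding the two contributions yields the first bound in (\ref{r58t021}).

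For the density, since $\nabla\cdot u=0$ it solves the transport equation $\partial_{t}\rho+u\cdot\nabla\rho=0$; differentiating in space and carrying out the standard $L^{q}$ estimate gives $\frac{d}{dt}\|\nabla\rho(t)\|_{L^{q}}\leq C\|\nabla u(t)\|_{L^{\infty}}\|\nabla\rho(t)\|_{L^{q}}$, so the Gronwall inequality together with the first bound in (\ref{r58t021}) gives $\|\nabla\rho(t)\|_{L^{q}}\leq\|\nabla\rho_{0}\|_{L^{q}}\exp\bigl(C\int_{0}^{t}\|\nabla u(\tau)\|_{L^{\infty}}\,d\tau\bigr)\leq\widetilde{C_{1}}\|\nabla\rho_{0}\|_{L^{q}}$, which is the second bound in (\ref{r58t021}).

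The main obstacle is the time integration near $\tau=0$: there one only has the time‑weighted controls $\int_{0}^{t}\tau\|\dot{u}\|_{H^{1}}^{2}\,d\tau\leq\widetilde{C_{1}}$ and $\tau\|\sqrt{\rho}\dot{u}\|_{L^{2}}^{2}\leq\widetilde{C_{1}}$ (there is no weight‑free control of $\|\dot{u}\|_{H^{1}}$), so the exponent $m$---equivalently $\theta\in(0,1)$---must be chosen, and the two estimates combined through the H\"{o}lder inequality in time, in such a way that the residual singularity $\tau^{-1/(2-\theta)}$ stays integrable at the origin; any $m>2$ does the job, since one may then take $\theta<1$. Once this is arranged the $[1,t]$ piece and the density transport estimate are routine, and the same scheme will also yield the remaining exponential‑decay bounds for $\|\Delta u\|_{L^{m}}$, $\|\partial_{t}u\|_{H^{1}}$ and $\|\nabla p\|_{L^{m}}$ claimed in Theorem \ref{Th2}.
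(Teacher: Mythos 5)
Your proof is correct and follows essentially the paper's own argument: control $\|\nabla u\|_{L^{\infty}}$ through the $L^{p}$ regularity of the Stokes system (\ref{r58t006}) and an interpolation that leaves a power of $\|\dot{u}\|_{H^{1}}$, integrate in time by splitting at $t=1$ with (\ref{r58t003}), (\ref{rxcr5yn0}) and (\ref{r58t010}), and then close the $L^{q}$ transport/Gronwall estimate for $\nabla\rho$. The only (cosmetic) difference is the interpolation step: the paper uses the Gagliardo--Nirenberg inequality $\|\nabla u\|_{L^{\infty}}\leq C\|\nabla u\|_{L^{2}}^{\frac{p-2}{2p-2}}\|\Delta u\|_{L^{p}}^{\frac{p}{2p-2}}$, whose prefactor is bounded by (\ref{r58t003}), whereas you use $W^{2,m}(\mathbb{R}^{2})\hookrightarrow W^{1,\infty}(\mathbb{R}^{2})$ and interpolate $\rho\dot{u}$ between $L^{2}$ and $L^{p}$, producing the prefactor $\|\sqrt{\rho}\dot{u}\|_{L^{2}}^{1-\theta}$ whose $\tau^{-(1-\theta)/2}$ singularity you correctly absorb with the time weight.
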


\begin{proof}
For any $2<p<\infty$, we get
$\|\rho\dot{u}\|_{L^{p}}\leq C\|\rho\|_{L^{\infty}}
\|\dot{u}\|_{H^{1}}\leq C
\|\dot{u}\|_{H^{1}}.
$
\   Applying the $L^{p}$-estimate to (\ref{r58t006}) gives
\begin{eqnarray}
\|\nabla u\|_{L^{\infty}}\leq C\|\nabla u\|_{L^{2}}^{\frac{p-2}{2p-2}}\|\Delta u\|_{L^{p}}^{\frac{p}{2p-2}} \leq C\|\nabla u\|_{L^{2}}^{\frac{p-2}{2p-2}}\|\rho\dot{u}\|_{L^{p}}^{\frac{p}{2p-2}} \leq C\|\nabla u\|_{L^{2}}^{\frac{p-2}{2p-2}}\| \dot{u}\|_{H^{1}}^{\frac{p}{2p-2}}. \nonumber
\end{eqnarray}
According to (\ref{r58t003}), (\ref{rxcr5yn0}) and (\ref{r58t010}), we  obtain the first estimate of (\ref{r58t021}).
The second estimate of (\ref{r58t021}) is a direct consequence of the first estimate.  The proof of the lemma is completed.
\end{proof}

\begin{lemma}\label{SINDL35}
Under the assumptions of Theorem \ref{Th2}, the   solution $(\rho,u)$
of the system (\ref{SINDNSE}) admits the following bound for any $m\in (2,\,\infty)$,
\begin{eqnarray*} 
e^{\gamma t}\|\partial_{t}u(t)
\|_{H^{1}}^{2}+e^{\gamma t}\|\Delta u(t)\|_{L^{m}}^{2}+e^{\gamma t}\|\nabla p(t)\|_{ L^{m}}^{2}+\int_{1}^{t}{e^{\gamma \tau}\|\sqrt{\rho}\partial_{\tau\tau}u(\tau)\|_{L^{2}}^{2}\,d\tau} \leq \widetilde{C},
\end{eqnarray*}
where $\widetilde{C}$ depends only on $\|\rho_{0}\|_{L^{1}}$, $\|\rho_{0}\|_{L^{\infty}}$, $\|\nabla\rho_{0}\|_{L^{q}}$, $\|\sqrt{\rho_{0}}u_{0}\|_{L^{2}}$ and $\|u_{0}\|_{H^{1}}$.
\end{lemma}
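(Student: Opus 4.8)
The plan is to follow the scheme of Lemmas \ref{INDL226} and \ref{INDL227}, carried out now in the $H^{1}$-framework dictated by the damped dissipation $-\Delta u+u$, with the bound $\|\nabla\rho(t)\|_{L^{q}}\le\widetilde{C_{1}}\|\nabla\rho_{0}\|_{L^{q}}$ of Lemma \ref{SINDL34} playing the role of \eqref{adds521tt009}. First I would differentiate the momentum equation $(\ref{SINDNSE})_{2}$ in time to get
\begin{equation*}
\rho\partial_{tt}u+\rho u\cdot\nabla\partial_{t}u-\Delta\partial_{t}u+\partial_{t}u+\nabla\partial_{t}p=-\partial_{t}\rho\,\partial_{t}u-\partial_{t}(\rho u)\cdot\nabla u,
\end{equation*}
then multiply by $\partial_{tt}u$ and integrate over $\mathbb{R}^{2}$; since $\nabla\cdot\partial_{tt}u=0$ the pressure term vanishes, and after transferring $\rho u\cdot\nabla\partial_{t}u\cdot\partial_{tt}u$ to the right-hand side the left side produces $\tfrac12\tfrac{d}{dt}\|\partial_{t}u\|_{H^{1}}^{2}+\|\sqrt{\rho}\partial_{tt}u\|_{L^{2}}^{2}$. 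The right-hand side terms are estimated exactly as $N_{1},N_{2},N_{3}$ (equivalently $H_{1},\dots,H_{4}$) in the proof of Lemma \ref{INDL226}: the critical contribution $-\int_{\mathbb{R}^{2}}\partial_{t}\rho\,\partial_{t}u\cdot\partial_{tt}u\,dx$ is rewritten, via $\partial_{t}\rho=-\Dv(\rho u)=-u\cdot\nabla\rho$ and integration by parts in time and space, as an exact time derivative $-\tfrac12\tfrac{d}{dt}\int_{\mathbb{R}^{2}}\partial_{t}\rho|\partial_{t}u|^{2}\,dx$ plus remainders dominated using $\|\partial_{t}\rho\|_{L^{2}}\le\|u\|_{L^{\frac{2q}{q-2}}}\|\nabla\rho\|_{L^{q}}\le C\|u\|_{H^{1}}\|\nabla\rho_{0}\|_{L^{q}}$ (here $q>2$ and $H^{1}(\mathbb{R}^{2})\hookrightarrow L^{r}$ for every $r<\infty$), and the convective and lower-order terms similarly. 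The outcome is a differential inequality of the form
\begin{equation*}
\frac{d}{dt}\big(\|\partial_{t}u(t)\|_{H^{1}}^{2}+\Phi(t)\big)+\|\sqrt{\rho}\partial_{tt}u\|_{L^{2}}^{2}\le A(t)\|\partial_{t}u\|_{H^{1}}+B(t)\|\partial_{t}u\|_{H^{1}}^{2},
\end{equation*}
where the correction $\Phi$ satisfies $|\Phi(t)|\le\tfrac12\|\partial_{t}u\|_{H^{1}}^{2}+\widetilde C$, and $A,B\ge0$ are built from $\|u\|_{H^{1}}$, $\|u\|_{H^{2}}$, $\|\sqrt{\rho}\partial_{t}u\|_{L^{2}}$, hence are time-integrable on $[0,\infty)$ and decay exponentially, by Lemmas \ref{SINDL31}--\ref{SINDL33}.

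Because no bound on $\sqrt{\rho_{0}}\partial_{t}u(0)$ is assumed, I would first multiply this inequality by $t^{2}$ and run Gronwall's inequality: using $t^{2}|\Phi(t)|\le\tfrac12 t^{2}\|\partial_{t}u\|_{H^{1}}^{2}+\widetilde C$, the time-weighted bounds $\int_{0}^{t}\tau\|\dot u\|_{H^{1}}^{2}\,d\tau\le\widetilde{C_{1}}$ and $t\|u(t)\|_{H^{2}}^{2}\le\widetilde{C_{1}}$ (the latter following from \eqref{rxcr5yn0} via Stokes regularity), together with $\|\partial_{t}u\|_{H^{1}}\le\|\dot u\|_{H^{1}}+C\|u\|_{H^{2}}^{2}$, and Cauchy--Schwarz to control $\int_{0}^{t}\tau^{2}A(\tau)\|\partial_{\tau}u\|_{H^{1}}\,d\tau$, one obtains
\begin{equation*}
t^{2}\|\partial_{t}u(t)\|_{H^{1}}^{2}+\int_{0}^{t}\tau^{2}\|\sqrt{\rho}\partial_{\tau\tau}u(\tau)\|_{L^{2}}^{2}\,d\tau\le\widetilde C,\qquad t\ge0,
\end{equation*}
the weight $t^{2}$ making $\widetilde C$ independent of the missing initial datum. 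Next I would multiply the same differential inequality by $e^{\gamma t}$, integrate over $[1,t]$, bound the boundary term at $\tau=1$ by the time-weighted estimate above, absorb $e^{\gamma t}|\Phi(t)|$, and note that $e^{\gamma\tau}A(\tau)^{2}$, $\gamma e^{\gamma\tau}\Phi(\tau)$ and $\gamma e^{\gamma\tau}\|\partial_{\tau}u\|_{H^{1}}^{2}$ are integrable on $[1,\infty)$ and $\int_{1}^{t}B(\tau)\,d\tau\le\widetilde C$ (since $\|u\|_{H^{1}}^{2}$, $\|u\|_{H^{2}}^{2}$, $\|\sqrt{\rho}\partial_{t}u\|_{L^{2}}^{2}$ decay like $e^{-\gamma t}$ by \eqref{r58t010}); Gronwall's inequality then gives
\begin{equation*}
e^{\gamma t}\|\partial_{t}u(t)\|_{H^{1}}^{2}+\int_{1}^{t}e^{\gamma\tau}\|\sqrt{\rho}\partial_{\tau\tau}u(\tau)\|_{L^{2}}^{2}\,d\tau\le\widetilde C,\qquad t\ge1.
\end{equation*}

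Finally, for the $L^{m}$ bounds I would apply Stokes regularity to $-\Delta u+u+\nabla p=-\rho\dot u$: for $2<m<\infty$ one has $\|\Delta u\|_{L^{m}}+\|\nabla p\|_{L^{m}}\le C\|\rho\dot u\|_{L^{m}}\le C\|\rho_{0}\|_{L^{\infty}}\|\dot u\|_{L^{m}}\le C\|\dot u\|_{H^{1}}$ (using $H^{1}(\mathbb{R}^{2})\hookrightarrow L^{m}$), while $\|\dot u\|_{H^{1}}\le\|\partial_{t}u\|_{H^{1}}+\|u\cdot\nabla u\|_{H^{1}}\le\|\partial_{t}u\|_{H^{1}}+C\|u\|_{H^{2}}^{2}$ by \eqref{adrtyew1}. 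Combining this with the exponential decay of $\|\partial_{t}u(t)\|_{H^{1}}^{2}$ and $\|u(t)\|_{H^{2}}^{2}$ (from \eqref{r58t010}) yields $e^{\gamma t}\big(\|\Delta u(t)\|_{L^{m}}^{2}+\|\nabla p(t)\|_{L^{m}}^{2}\big)\le\widetilde C$ for $t\ge1$, and for $0<t\le1$ the time-weighted bounds give $t^{2}$-control of the same quantities, so the claimed inequality holds for all $t>0$ after adjusting $\widetilde C$. I expect the main obstacle to be the bookkeeping on the right-hand side of the $\partial_{tt}u$-energy identity: organizing the $\partial_{t}\rho$ and $\partial_{tt}\rho$ contributions so that each piece is either an exact time derivative absorbed into $\Phi$ or controlled by $A(t)\|\partial_{t}u\|_{H^{1}}+B(t)\|\partial_{t}u\|_{H^{1}}^{2}$ with $A,B$ of the required integrability, and doing so while carefully avoiding the critical two-dimensional embedding $H^{1}(\mathbb{R}^{2})\not\hookrightarrow L^{\infty}$ — the $\|\nabla u\|_{L^{\infty}}$-type factors must be supplied by the already-established bound \eqref{r58t021} rather than by Sobolev embedding of $\partial_{t}u$.
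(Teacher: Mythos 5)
Your overall architecture matches the paper's: differentiate the momentum equation in time, test with $\partial_{tt}u$, rewrite the $\partial_{t}\rho$-terms as an exact derivative $\phi(t)$ plus remainders controlled via $\|\nabla\rho\|_{L^{q}}$ from Lemma \ref{SINDL34}, and finish the $L^{m}$ bounds by elliptic regularity for $-\Delta u+u+\nabla p=-\rho\dot u$ together with \eqref{adrtyew1}. The final step also agrees with the paper. However, there is a genuine gap at the pivotal intermediate claim, namely the $t^{2}$-weighted estimate
\begin{equation*}
t^{2}\|\partial_{t}u(t)\|_{H^{1}}^{2}+\int_{0}^{t}\tau^{2}\|\sqrt{\rho}\partial_{\tau\tau}u(\tau)\|_{L^{2}}^{2}\,d\tau\leq\widetilde C ,
\end{equation*}
run by Gronwall on $[0,t]$. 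Your justification that ``$A,B$ are built from $\|u\|_{H^{1}},\|u\|_{H^{2}},\|\sqrt{\rho}\partial_{t}u\|_{L^{2}}$, hence time-integrable on $[0,\infty)$'' is not accurate in dimension two. The term $\int_{\mathbb{R}^{2}}\rho\,\partial_{t}u_{i}\,\partial_{t}u\cdot\partial_{t}\partial_{i}u\,dx$ cannot be given a small factor $\|\sqrt{\rho}\partial_{t}u\|_{L^{2}}$ as in the case $n\geq3$ (where $\|\partial_{t}u\|_{L^{\frac{4n}{n-2}}}\lesssim\|\Lambda^{\frac12+\frac n4}\partial_{t}u\|_{L^{2}}$ is available); in $\mathbb{R}^{2}$ one is forced through Ladyzhenskaya to a bound of the form $C\|\partial_{t}u\|_{H^{1}}\,\|\partial_{t}u\|_{H^{1}}^{2}$, which is exactly how the paper writes it in \eqref{r58t026}. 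Hence $B(t)$ contains $\|\partial_{t}u\|_{H^{1}}$ itself, and near $t=0$ this is only known to satisfy $\int_{0}^{t}\tau\|\partial_{\tau}u\|_{H^{1}}^{2}\,d\tau\leq\widetilde C$; that does not give $\int_{0}^{1}B(\tau)\,d\tau<\infty$, so the Gronwall on $[0,t]$ does not close. A second obstruction is the correction term: the available bound on $\phi$ involves $\|u\|_{H^{2}}^{3}\|\partial_{t}u\|_{H^{1}}$ (equivalently $\|u\|_{H^{2}}^{6}$ after Young), and without the compatibility condition one only knows $t\|u(t)\|_{H^{2}}^{2}\leq\widetilde{C_{1}}$, i.e. $\|u\|_{H^{2}}$ may behave like $t^{-1/2}$; then the term $2t\phi(t)$ produced by differentiating $t^{2}\phi$ fails to be integrable near $t=0$.

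The paper avoids both problems by never proving a $t^{2}$-weighted bound for $\|\partial_{t}u\|_{H^{1}}$: it invokes \eqref{r58t010} with $t_{0}=\tfrac12$, which gives $\int_{1/2}^{1}e^{\gamma\tau}\|\partial_{\tau}u(\tau)\|_{H^{1}}^{2}\,d\tau<\infty$, selects by a mean-value argument a time $\sigma\in(\tfrac12,1)$ at which $e^{\gamma\sigma}\bigl(\|\partial_{t}u(\sigma)\|_{H^{1}}^{2}+\phi(\sigma)\bigr)$ is finite, and integrates the exponentially weighted inequality only on $[\sigma,t]$, where all Gronwall coefficients, including $\|\partial_{\tau}u\|_{H^{1}}$ and the powers of $\|u\|_{H^{2}}$, are integrable thanks to \eqref{r58t010}. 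Since the lemma only asserts decay for $t\geq1$, nothing is lost. Your argument becomes correct if you replace the $[0,t]$ $t^{2}$-weighted step by this starting-time selection; as written, the key intermediate estimate is unjustified and, with the estimates available in the paper, appears unprovable by the route you propose.
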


\begin{proof}
The proof can be performed by modifying that proof of Lemma \ref{INDL226}.
We first have by (\ref{dsvv54t226}) that
\begin{eqnarray*}
 \frac{1}{2}\frac{d}{dt}\|\partial_{t}u(t)
 \|_{H^{1}}^{2} +\|\sqrt{\rho}\partial_{tt}u\|_{L^{2}}^{2}=H_{1}+H_{2}+H_{3}+H_{4}.
\end{eqnarray*}
The $H_{3}$ and $H_{4}$ can be easily bounded by
\begin{align*}
|H_{3}|+|H_{4}|\leq& C\|\sqrt{\rho}\|_{L^{\infty}}\|\sqrt{\rho}\partial_{tt}u\|_{L^{2}}
(\|\partial_{t}u\|_{L^{4}}\|\nabla u\|_{L^{4}}+\|\nabla\partial_{t}u\|_{L^{2}}\|u\|_{L^{\infty}}) \\
\leq& C\|\sqrt{\rho_{0}}\|_{L^{\infty}}\|\sqrt{\rho}\partial_{tt}u\|_{L^{2}}
\|\partial_{t}u\|_{H^{1}}\|u\|_{H^{2}}
\leq\frac{1}{16}\|\sqrt{\rho}\partial_{tt}u\|_{L^{2}}^{2}+
C\|u\|_{H^{2}}^{2}\|\partial_{t}u\|_{H^{1}}^{2}.
\nonumber
\end{align*}
Recalling (\ref{w34cvty76}), we thus obtain
\begin{align*}
H_{1}
=& -\frac{1}{2}\frac{d}{dt}\int_{\mathbb{R}^{2}}\partial_{t}\rho |\partial_{t}u|^{2}\,dx+\int_{\mathbb{R}^{2}}\partial_{t}\rho u_{i} \partial_{t}u\cdot\partial_{t}\partial_{i}u\,dx+\int_{\mathbb{R}^{2}}\rho \partial_{t}u_{i} \partial_{t}u\cdot\partial_{t}\partial_{i}u\,dx \nonumber\\
\leq&-\frac{1}{2}\frac{d}{dt}\int_{\mathbb{R}^{2}}\partial_{t}\rho |\partial_{t}u|^{2}\,dx+C\|u\cdot\nabla\rho\|_{L^{q}}\|u\|_{L^{\infty}}
\|\partial_{t}u\|_{L^{\frac{2q}{q-2}}} \|\partial_{t}\nabla u\|_{L^{2}}\nonumber\\
&+C\|\rho\|_{L^{\infty}}
\|\partial_{t}u\|_{L^{4}}^{2} \|\partial_{t}\nabla u\|_{L^{2}}\nonumber\\
\leq&-\frac{1}{2}\frac{d}{dt}\int_{\mathbb{R}^{2}}\partial_{t}\rho |\partial_{t}u|^{2}\,dx+C\|\nabla\rho\|_{L^{q}}\|u\|_{L^{\infty}}^{2}
\|\partial_{t}u\|_{L^{\frac{2q}{q-2}}} \|\partial_{t}\nabla u\|_{L^{2}}\nonumber\\
&+C\|\rho\|_{L^{\infty}}
\|\partial_{t}u\|_{L^{4}}^{2} \|\partial_{t}\nabla u\|_{L^{2}}
\nonumber\\
\leq&-\frac{1}{2}\frac{d}{dt}\int_{\mathbb{R}^{2}}\partial_{t}\rho |\partial_{t}u|^{2}\,dx+C\|u\|_{H^{2}}^{2}
\|\partial_{t}u\|_{H^{1}} \|\partial_{t}\nabla u\|_{L^{2}}+C
\|\partial_{t}u\|_{H^{1}}^{2} \|\partial_{t}\nabla u\|_{L^{2}}\nonumber\\
\leq&-\frac{1}{2}\frac{d}{dt}\int_{\mathbb{R}^{2}}\partial_{t}\rho |\partial_{t}u|^{2}\,dx+C\|u\|_{H^{2}}^{2}
\|\partial_{t}u\|_{H^{1}}^{2}+C
\|\partial_{t}u\|_{H^{1}} \|\partial_{t} u\|_{H^{1}}^{2}.\nonumber
\end{align*}
In view of (\ref{w34cvty78}), one has
\begin{eqnarray}
H_{2}=-\frac{d}{dt}\int_{\mathbb{R}^{2}}\partial_{t}\rho u\cdot\nabla u\cdot \partial_{t}u\,dx+H_{21}+H_{22}+H_{23}.\nonumber
\end{eqnarray}
Now we may deduce that
\begin{align}
H_{21}\leq& C\|\rho\|_{L^{\infty}}\|\partial_{t}u\|_{L^{4}}(\|\nabla u\|_{L^{4}}^{2}\|\partial_{t}u\|_{L^{4}}+\|u\|_{L^{\infty}}\|\Delta u\|_{L^{2}}\|\partial_{t}u\|_{L^{4}}\nonumber\\& +\|u\|_{L^{\infty}}\|\nabla u\|_{L^{4}}\|\nabla \partial_{t}u\|_{L^{2}})
\nonumber\\ \leq&
C\|\rho_{0}\|_{L^{\infty}}\|\partial_{t}u\|_{H^{1}}(\| u\|_{H^{2}}^{2}\|\partial_{t}u\|_{H^{1}}+\|u\|_{H^{2}}^{2}\|\partial_{t}u\|_{H^{1}}+\|u\|_{H^{2}}^{2}\|\nabla \partial_{t}u\|_{L^{2}})\nonumber\\ \leq&
C\|u\|_{H^{2}}^{2}\|\partial_{t}u\|_{H^{1}}^{2},\nonumber
\end{align}
\begin{align}
H_{22}  \leq&C\|u\cdot\nabla\rho\|_{L^{q}}(\|u\|_{L^{\infty}}^{2}\|\nabla u\|_{L^{\frac{2q}{q-2}}} \|\nabla\partial_{t}u\|_{L^{2}}+
\|u\|_{L^{\infty}}\|\nabla u\|_{L^{4}}^{2} \|\partial_{t}u\|_{L^{\frac{2q}{q-2}}}
\nonumber\\& +
\|u\|_{L^{\infty}}^{2}\|\Delta u\|_{L^{2}}  \|\partial_{t}u\|_{L^{\frac{2q}{q-2}}})
\nonumber\\ \leq&C\|\nabla\rho\|_{L^{q}}(\|u\|_{L^{\infty}}^{3}\|\nabla u\|_{L^{\frac{2q}{q-2}}} \|\nabla\partial_{t}u\|_{L^{2}}+
\|u\|_{L^{\infty}}^{2}\|\nabla u\|_{L^{4}}^{2} \|\partial_{t}u\|_{L^{\frac{2q}{q-2}}}
\nonumber\\& +
\|u\|_{L^{\infty}}^{3}\|\Delta u\|_{L^{2}}  \|\partial_{t}u\|_{L^{\frac{2q}{q-2}}})
\nonumber\\ \leq&C \|u\|_{H^{2}}^{4}
\|\partial_{t}u\|_{H^{1}},\nonumber
\end{align}
\begin{align}
H_{23}  \leq&C\|u\cdot\nabla\rho\|_{L^{q}}(\|\partial_{t}u\|_{L^{4}}^{2}\|\nabla u\|_{L^{\frac{2q}{q-2}}} +
\|u\|_{L^{\infty}}\|\nabla u\|_{L^{\frac{2q}{q-2}}} \|\nabla\partial_{t}u\|_{L^{2}})
\nonumber\\ \leq&C\|\nabla\rho\|_{L^{q}}(\|u\|_{L^{\infty}}\|\partial_{t}u\|_{L^{4}}^{2}\|\nabla u\|_{L^{\frac{2q}{q-2}}} +
\|u\|_{L^{\infty}}^{2}\|\nabla u\|_{L^{\frac{2q}{q-2}}} \|\nabla\partial_{t}u\|_{L^{2}})
\nonumber\\ \leq&C(\|u\|_{H^{2}}^{2}\|\partial_{t}u\|_{H^{1}}^{2}+
\|u\|_{H^{2}}^{3} \|\partial_{t}u\|_{H^{1}}).\nonumber
\end{align}
One thus deduces
$$
H_{2}\leq -\frac{d}{dt}\int_{\mathbb{R}^{2}}\partial_{t}\rho u\cdot\nabla u\cdot \partial_{t}u\,dx+C(\|u\|_{H^{2}}^{2}\|\partial_{t}u\|_{H^{1}}^{2}+\|u\|_{H^{2}}^{4}
\|\partial_{t}u\|_{H^{1}}+
\|u\|_{H^{2}}^{3} \|\partial_{t}u\|_{H^{1}}).$$
Putting all the above estimates together implies that
\begin{eqnarray}\label{r58t026}
\frac{d}{dt}\left(\|\partial_{t}u(t)
 \|_{H^{1}}^{2}+\phi(t)\right) +\|\sqrt{\rho}\partial_{tt}u(t)\|_{L^{2}}^{2}\leq C
\|\partial_{t}u\|_{H^{1}} \|\partial_{t} u\|_{H^{1}}^{2}+R(t),
\end{eqnarray}
where
$$R(t):=C(\|u(t)\|_{H^{2}}^{2}\|\partial_{t}u(t)\|_{H^{1}}^{2}+\|u(t)\|_{H^{2}}^{4}
\|\partial_{t}u(t)\|_{H^{1}}+
\|u(t)\|_{H^{2}}^{3} \|\partial_{t}u(t)\|_{H^{1}}),$$
$$\phi(t):=-\frac{1}{2} \int_{\mathbb{R}^{2}}\partial_{t}\rho |\partial_{t}u|^{2}\,dx- \int_{\mathbb{R}^{2}}\partial_{t}\rho u\cdot\nabla u\cdot \partial_{t}u\,dx.$$
By the embedding inequality, we also get
\begin{align}
\left|\phi(t)\right| =&\left|- \int_{\mathbb{R}^{2}} \rho u_{i}\partial_{t}u\cdot\partial_{t}\partial_{i}u\,dx- \int_{\mathbb{R}^{2}}\partial_{t}\rho u\cdot\nabla u\cdot \partial_{t}u\,dx\right|
\nonumber\\
 \leq&C\|\sqrt{{\rho}}\|_{L^{\infty}}\|\sqrt{\rho}\partial_{t}u\|_{L^{2}}\| u\|_{L^{\infty}} \|\nabla\partial_{t}u\|_{L^{2}}
+C\|u\cdot\nabla\rho\|_{L^{q}}\|u\|_{L^{\infty}}\|\nabla u\|_{L^{4}} \|\partial_{t}u\|_{L^{4}}
\nonumber\\
 \leq&C  \| u\|_{H^{2}} \|\sqrt{\rho}\partial_{t}u\|_{L^{2}}\|\partial_{t}u\|_{H^{1}}
+C \| u\|_{H^{2}}^{3} \|\partial_{t}u\|_{H^{1}}
\nonumber\\
 \leq&\frac{1}{2}\|\partial_{t}u(t)\|_{H^{1}}^{2}+C\| u\|_{H^{2}}^{6}+\| u\|_{H^{2}} ^{2} \|\sqrt{\rho}\partial_{t}u\|_{L^{2}}^{2}.\nonumber
\end{align}
This leads to
\begin{eqnarray}\label{r58t027}
\left|\phi(t)\right|\leq\frac{1}{2}\|\partial_{t}u(t)\|_{H^{1}}^{2}+C\| u(t)\|_{H^{2}}^{6}+\| u(t)\|_{H^{2}} ^{2} \|\sqrt{\rho}\partial_{t}u(t)\|_{L^{2}}^{2}.
\end{eqnarray}
Now we multiply (\ref{r58t026}) by $e^{\gamma t}$ to obtain
\begin{align}\label{r58t027} &
\frac{d}{dt}\left(e^{\gamma t}\|\partial_{t}u(t)
 \|_{H^{1}}^{2}+e^{\gamma t}\phi(t)\right) +e^{\gamma t}\|\sqrt{\rho}\partial_{tt}u\|_{L^{2}}^{2} \nonumber\\ &\leq \gamma e^{\gamma t}\|\partial_{t}u(t)
 \|_{H^{1}}^{2}+\gamma e^{\gamma t}\phi(t) +Ce^{\gamma t}
\|\partial_{t}u\|_{H^{1}} \|\partial_{t} u\|_{H^{1}}^{2}+e^{\gamma t}R(t).
\end{align}
For any $t\geq1$, by (\ref{rxcr5yn0}) and (\ref{r58t010}), there exists $\sigma\in (\frac{1}{2},\,1)$ such that
$$e^{\gamma \sigma}\|\partial_{t}u(\sigma)
 \|_{H^{1}}^{2}+e^{\gamma \sigma}\phi(\sigma):=\widetilde{C}.$$
It follows from (\ref{r58t010}) again
$$\int_{\frac{1}{2}}^{t}e^{\gamma \tau}\|\partial_{\tau}u(\tau)
 \|_{H^{1}}^{2}\,d\tau+\int_{\frac{1}{2}}^{t}e^{\gamma \tau} \phi(\tau)\,d\tau +\int_{\frac{1}{2}}^{t}e^{\gamma\tau}R(\tau)\,d\tau\leq \widetilde{C}.$$
Noticing the above estimate, we integrate (\ref{r58t027}) on the time interval $[\sigma,\,t]$ to show
\begin{align}\label{r58t029}
& e^{\gamma t}\|\partial_{t}u(t)
 \|_{H^{1}}^{2}+e^{\gamma t}\phi(t)+\int_{\sigma}^{t}e^{\gamma \tau}\|\sqrt{\rho}\partial_{\tau\tau}u(\tau)\|_{L^{2}}^{2}\,d\tau
 \nonumber\\
 &\leq  \widetilde{C}+C\int_{\sigma}^{t}
\|\partial_{\tau}u(\tau)\|_{H^{1}}(e^{\gamma \tau}\|\partial_{\tau} u(\tau)\|_{H^{1}}^{2})\,d\tau.
\end{align}
From  (\ref{r58t027}) and (\ref{r58t010}), it follows that for any $t\geq \frac{1}{2}$,
\begin{eqnarray}\label{r58t030}
e^{\gamma t}\left|\phi(t)\right|\leq \frac{1}{2}e^{\gamma t}\|\partial_{t}u(t)\|_{H^{1}}^{2}
+\widetilde{C}.
\end{eqnarray}
Combining (\ref{r58t029}) and (\ref{r58t030}) ensures
$$
e^{\gamma t}\|\partial_{t}u(t)
 \|_{H^{1}}^{2}+\int_{\sigma}^{t}e^{\gamma \tau}\|\sqrt{\rho}\partial_{\tau\tau}u(\tau)\|_{L^{2}}^{2}\,d\tau \leq  \widetilde{C}+C\int_{\sigma}^{t}
\|\partial_{\tau}u(\tau)\|_{H^{1}}(e^{\gamma \tau}\|\partial_{\tau} u(\tau)\|_{H^{1}}^{2})\,d\tau.
$$
The Gronwall inequality and (\ref{r58t010}) allow us to conclude that for any $t\geq \sigma$,
\begin{eqnarray}
e^{\gamma t}\|\partial_{t}u(t)
 \|_{H^{1}}^{2}+\int_{\sigma}^{t}e^{\gamma \tau}\|\sqrt{\rho}\partial_{\tau\tau}u(\tau)\|_{L^{2}}^{2}\,d\tau \leq \widetilde{C}.\nonumber
\end{eqnarray}
Since $\sigma\leq 1$, we further have for any $t\geq1$,
\begin{eqnarray}\label{ccfr56n8}
e^{\gamma t}\|\partial_{t}u(t)
 \|_{H^{1}}^{2}+\int_{1}^{t}e^{\gamma \tau}\|\sqrt{\rho}\partial_{\tau\tau}u(\tau)\|_{L^{2}}^{2}\,d\tau \leq \widetilde{C}.
\end{eqnarray}
By (\ref{adrtyew1}), (\ref{addr58t006}), (\ref{r58t013}) and (\ref{ccfr56n8}), we derive that for any $t\geq 1$,
\begin{align}
e^{\gamma t}\|\Delta u(t)\|_{L^{m}}^{2}+e^{\gamma t}\|\nabla p(t)\|_{L^{m}}^{2}
&\leq  Ce^{\gamma t}(\|{\rho}\partial_{t}u(t)\|_{L^{m}}+\|{\rho}u\cdot\nabla u(t)\|_{L^{m}})^{2}\nonumber\\
&\leq  Ce^{\gamma t}(\|\partial_{t}u(t)\|_{H^{1}}+\|u\cdot\nabla u(t)\|_{H^{1}})^{2}\nonumber\\
&\leq  Ce^{\gamma t}(\|\partial_{t}u(t)\|_{H^{1}}^{2}+\|u(t)\|_{H^{2}}^{4})\leq \widetilde{C}.\nonumber
\end{align}
This finishes the proof of Lemma \ref{SINDL35}.
\end{proof}

Finally, Theorem \ref{Th2} follows immediately  from Lemmas \ref{SINDL31}-\ref{SINDL35}.

\bigskip

\section*{Acknowledgments}
D. Wang's research was supported in part by the National Science Foundation under grant DMS-1613213.
Z. Ye was supported by the Foundation of Jiangsu Normal University (No. 16XLR029), the Natural Science Foundation of Jiangsu
Province (No. BK20170224) and the National Natural Science Foundation of China (No. 11701232).
\bigskip

\end{document}